\newcommand{\deleted}[1]{}
\newcommand{\delete}[1]{}
\newcommand{\mynotes}[1]{}
\newcommand\notes[1]{}
\newcommand\changed[1]{#1}
\newtheorem{theorem}{Theorem}[section]
\newtheorem{lemma}[theorem]{Lemma}
\newtheorem{coro}[theorem]{Corollary}
\newtheorem{conjecture}[theorem]{Conjecture}
\newtheorem{prop}[theorem]{Proposition}
\theoremstyle{definition}
\newtheorem{defn}[theorem]{Definition}
\newtheorem{remark}[theorem]{Remark}
\newtheorem{exam}[theorem]{Example}
\newtheorem{claim}[theorem]{Claim}
\newtheorem{prop-def}{Proposition-Definition}[section]
\newtheorem{coro-def}{Corollary-Definition}[section]
\newcommand{\nc}{\newcommand}
\nc{\tred}[1]{\textcolor{red}{#1}} \nc{\tblue}[1]{\textcolor{blue}{#1}} \nc{\tgreen}[1]{\textcolor{green}{#1}} \nc{\tpurple}[1]{\textcolor{purple}{#1}} \nc{\btred}[1]{\textcolor{red}{\bf #1}} \nc{\btblue}[1]{\textcolor{blue}{\bf #1}} \nc{\btgreen}[1]{\textcolor{green}{\bf #1}} \nc{\btpurple}[1]{\textcolor{purple}{\bf #1}}
\renewcommand{\Bbb}{\mathbb}
\newcommand{\efootnote}[1]{}
\newcommand\wyscco[1]{}
\renewcommand{\textbf}[1]{}
\nc{\mlabel}[1]{\label{#1}}  
\nc{\mcite}[1]{\cite{#1}}  
\nc{\mref}[1]{\ref{#1}}  
\nc{\mbibitem}[1]{\bibitem{#1}} 
\renewcommand\geq{\geqslant}
\renewcommand\leq{\leqslant}
\renewcommand\preceq{\preccurlyeq}
\newcommand{\paren}[1]{$($#1$)$}
\renewcommand\bar[1]{\overline{#1}}
\renewcommand\tilde[1]{\widetilde{#1}}
\nc\kdot{\bfk\,}
\nc\simple{simple\xspace}
\newcommand\mapmZss{\mapm{Z}^{\star_1,\star_2}}
\nc{\rbw}{\mathfrak{R}} \nc{\brp}{\mathrm{brp}} \nc{\lead}{\mathrm{Lead}} \nc{\Id}{\mathrm{Id}} \nc{\Irr}{\mathrm{Irr}} \nc{\vx}{\sigma} \nc{\vy}{\tau} \nc{\dvx}{\sigma^{(1)}} \nc{\dvy}{\tau^{(1)}} \nc{\done}{\vep} \nc{\citep}[1]{\cite{#1}} \nc{\wt}{\mathrm{wt}} \nc{\bre}[1]{|#1|} \nc{\mapmonoid}{\frakM} \nc{\disjoint}{\frakM'}
\nc{\ncpoly}[1]{\langle #1\rangle}  
\nc{\mapm}[1]{\lfloor\!|{#1}|\!\rfloor}
\nc{\diff}[1]{{}^\NC\{ #1 \}} \nc{\disj}[1]{\{{#1}\}'} \nc{\mdisj}[1]{\frakM'(#1)} \nc{\brho}{\bar{\rho}} \nc{\om}{\bar{\frakm}} \nc{\frakn}{\mathfrak n} \nc{\ddeg}[1]{^{(#1)}} \nc{\opset}{X} \nc{\genset}{{Z}} \nc{\NC}{\mathrm{{NC}}} \nc{\leaf}{\mathrm{leaf}} \nc{\twig}{\mathrm{twig}} \nc{\fe}{\mathrm{fl}} \nc{\munderline}[1]{#1} \nc{\bo}{o} \nc{\dep}{\mathrm{depth}} \nc{\ofe}{\mathrm{ofl}} \nc{\dfe}{\mathrm{dfe}} \nc{\fex}{\mathrm{fex}} \nc{\dl}{\mathrm{dlex}} \nc{\db}{\mathrm{db}} \nc{\lex}{\mathrm{lex}} \nc{\clex}{\mathrm{clex}} \nc{\dgp}{\mathrm{dgp}} \nc{\dgx}{\mathrm{dgx}} \nc{\br}{\mathrm{br}} \nc{\obd}{\mathrm{odb}} \nc{\ob}{\mathrm{ob}}
\nc{\loc}{location\xspace}
\nc{\occ}{occurrence\xspace}
\nc{\occs}{occurrences\xspace}
\nc{\pla}{placement\xspace}
\nc{\plas}{placements\xspace}
\nc{\bin}[2]{ (_{\stackrel{\scs{#1}}{\scs{#2}}})}  
\nc{\binc}[2]{ \left (\!\! \begin{array}{c} \scs{#1}\\
    \scs{#2} \end{array}\!\! \right )}  
\nc{\bincc}[2]{  \left ( {\scs{#1} \atop
    \vspace{-1cm}\scs{#2}} \right )}  
\nc{\bs}{\bar{S}} \nc{\cosum}{\sqsubset} \nc{\la}{\longrightarrow} \nc{\rar}{\rightarrow} \nc{\dar}{\downarrow} \nc{\dprod}{**} \nc{\dap}[1]{\downarrow \rlap{$\scriptstyle{#1}$}} \nc{\md}{\mathrm{dth}} \nc{\uap}[1]{\uparrow \rlap{$\scriptstyle{#1}$}} \nc{\defeq}{\stackrel{\rm def}{=}} \nc{\disp}[1]{\displaystyle{#1}} \nc{\dotcup}{\ \displaystyle{\bigcup^\bullet}\ } \nc{\gzeta}{\bar{\zeta}} \nc{\hcm}{\ \hat{,}\ } \nc{\hts}{\hat{\otimes}} \nc{\barot}{{\otimes}} \nc{\free}[1]{\widetilde{#1}} \nc{\uni}[1]{\tilde{#1}} \nc{\hcirc}{\hat{\circ}} \nc{\leng}{\ell} \nc{\lleft}{[} \nc{\lright}{]} \nc{\lc}{\lfloor} \nc{\rc}{\rfloor}
\nc{\lb}{[} 
\nc{\rb}{]} 
\nc{\curlyl}{\left \{ \begin{array}{c} {} \\ {} \end{array}
    \right.  \!\!\!\!\!\!\!}
\nc{\curlyr}{ \!\!\!\!\!\!\!
    \left. \begin{array}{c} {} \\ {} \end{array}
    \right \} }
\nc{\longmid}{\left | \begin{array}{c} {} \\ {} \end{array}
    \right. \!\!\!\!\!\!\!}
\nc{\onetree}{\bullet} \nc{\ora}[1]{\stackrel{#1}{\rar}}
\nc{\ola}[1]{\stackrel{#1}{\la}}
\nc{\ot}{\otimes} \nc{\mot}{{{\boxtimes\,}}} \nc{\otm}{\overline{\boxtimes}} \nc{\sprod}{\bullet} \nc{\scs}[1]{\scriptstyle{#1}} \nc{\mrm}[1]{{\rm #1}} \nc{\msum}{\sum\limits}
\nc{\margin}[1]{\marginpar{\rm #1}}   
\nc{\dirlim}{\displaystyle{\lim_{\longrightarrow}}\,} \nc{\invlim}{\displaystyle{\lim_{\longleftarrow}}\,} \nc{\mvp}{\vspace{0.3cm}} \nc{\tk}{^{(k)}} \nc{\tp}{^\prime} \nc{\ttp}{^{\prime\prime}} \nc{\svp}{\vspace{2cm}} \nc{\vp}{\vspace{8cm}} \nc{\proofbegin}{\noindent{\bf Proof: }}
\nc{\proofend}{$\blacksquare$ \vspace{0.3cm}}
\nc{\modg}[1]{\!<\!\!{#1}\!\!>}
\nc{\intg}[1]{F_C(#1)} \nc{\lmodg}{\!<\!\!} \nc{\rmodg}{\!\!>\!} \nc{\cpi}{\widehat{\Pi}}
\nc{\sha}{{\mbox{\cyr X}}}  
\nc{\shap}{{\mbox{\cyrs X}}} 
\nc{\shpr}{\diamond}    
\nc{\shp}{\ast} \nc{\shplus}{\shpr^+}
\nc{\shprc}{\shpr_c}    
\nc{\msh}{\ast} \nc{\zprod}{m_0} \nc{\oprod}{m_1} \nc{\vep}{\varepsilon} \nc{\labs}{\mid\!} \nc{\rabs}{\!\mid}
\nc{\astarrow}{\overset{\raisebox{-2pt}{{\scriptsize $\ast$}}}{\rightarrow}}
\nc{\lastarrow}{\overset{\raisebox{-2pt}{{\scriptsize $\ast$}}}{\leftarrow}}
\nc{\mastarrow}[1]{\overset{\raisebox{-2pt}{{\scriptsize $#1$}}}{\rightarrow}}
\nc{\quvarrow}[3]{#1 \overset{q,u,v}{\longrightarrow}_{#3} #2}
\nc{\quvkto}[1]{f_{#1} \overset{q_{#1}, u_{#1}, v_{#1}}{\longrightarrow}_\phi g_{#1}}
\nc{\tvarrow}[3]{#1 \overset{(t,v)}{\longrightarrow}_{#3} #2}
\nc{\Supp}{{\rm Supp}}
\nc{\mpu}{u^{\ast}}
\nc{\mpv}{v^{\ast}}
\nc{\mpw}{w^{\ast}}
\nc{\mpx}{x^{\ast}}
\nc{\dps}{\dotplus}
\nc{\dth}{d} \nc{\mmbox}[1]{\mbox{\ #1\ }} \nc{\fp}{\mrm{FP}} \nc{\rchar}{\mrm{char}} \nc{\Fil}{\mrm{Fil}} \nc{\Mor}{Mor\xspace} \nc{\gmzvs}{gMZV\xspace} \nc{\gmzv}{gMZV\xspace} \nc{\mzv}{MZV\xspace} \nc{\mzvs}{MZVs\xspace} \nc{\Hom}{\mrm{Hom}} \nc{\id}{\mrm{id}} \nc{\im}{\mrm{im}} \nc{\incl}{\mrm{incl}} \nc{\map}{\mrm{Map}} \nc{\mchar}{\rm char} \nc{\nz}{\rm NZ} \nc{\supp}{\mathrm Supp}
\nc{\mo}{\mathbf o}
\nc{\pl}{\mathfrak{p}}
\nc{\Alg}{\mathbf{Alg}} \nc{\Bax}{\mathbf{Bax}} \nc{\bff}{\mathbf f} \nc{\bfk}{{\bf k}} \nc{\bfone}{{\bf 1}} \nc{\bfx}{\mathbf x} \nc{\bfy}{\mathbf y}
\nc{\base}[1]{\bfone^{\otimes ({#1}+1)}} 
\nc{\Cat}{\mathbf{Cat}} \delete{}
\nc{\detail}{\marginpar{\bf More detail}
    \noindent{\bf Need more detail!}
    \svp}
\nc{\Int}{\mathbf{Int}} \nc{\Mon}{\mathbf{Mon}}
\nc{\rbtm}{{shuffle }} \nc{\rbto}{{Rota-Baxter }} \nc{\remarks}{\noindent{\bf Remarks: }} \nc{\Rings}{\mathbf{Rings}} \nc{\Sets}{\mathbf{Sets}}
\nc{\vwpt}{{Let $V$ be a free $\bfk$-module with a $\bfk$-basis $W$ and let $\Pi$ be a \simple term-rewriting system on $V$ with respect to $W$.}\xspace}
\nc{\BA}{{\Bbb A}} \nc{\CC}{{\Bbb C}} \nc{\DD}{{\Bbb D}} \nc{\EE}{{\Bbb E}} \nc{\FF}{{\Bbb F}} \nc{\GG}{{\Bbb G}} \nc{\HH}{{\Bbb H}} \nc{\LL}{{\Bbb L}} \nc{\NN}{{\Bbb N}} \nc{\KK}{{\Bbb K}} \nc{\QQ}{{\Bbb Q}} \nc{\RR}{{\Bbb R}} \nc{\TT}{{\Bbb T}} \nc{\VV}{{\Bbb V}} \nc{\ZZ}{{\Bbb Z}}
\nc{\cala}{{\mathcal A}} \nc{\calc}{{\mathcal C}} \nc{\cald}{{\mathcal D}} \nc{\cale}{{\mathcal E}} \nc{\calf}{{\mathcal F}} \nc{\calg}{{\mathcal G}} \nc{\calh}{{\mathcal H}} \nc{\cali}{{\mathcal I}} \nc{\call}{{\mathcal L}} \nc{\calm}{{\mathcal M}} \nc{\caln}{{\mathcal N}} \nc{\calo}{{\mathcal O}} \nc{\calp}{{\mathcal P}} \nc{\calr}{{\mathcal R}} \nc{\cals}{{\mathcal S}} \nc{\calt}{{\mathcal T}} \nc{\calw}{{\mathcal W}}
\nc{\calv}{{\mathcal V}}
\nc{\calk}{{\mathcal K}} \nc{\calx}{{\mathcal X}} \nc{\CA}{\mathcal{A}}
\nc{\fraka}{{\mathfrak a}} \nc{\frakA}{{\mathfrak A}} \nc{\frakb}{{\mathfrak b}} \nc{\frakB}{{\mathfrak B}} \nc{\frakD}{{\mathfrak D}} \nc{\frakH}{{\mathfrak H}} \nc{\frakM}{{\mathfrak M}} \nc{\bfrakM}{\overline{\frakM}} \nc{\frakm}{{\mathfrak m}} \nc{\frkP}{{\mathfrak P}}
\nc{\frakN}{{\mathfrak N}} \nc{\frakp}{{\mathfrak p}}
\nc{\frakQ}{{\mathfrak Q}}\nc{\frakR}{{\mathfrak R}} \nc{\frakS}{{\mathfrak S}}
\nc{\frakx}{{\mathfrak x}} \nc{\ox}{\bar{\frakx}} \nc{\frakX}{{\mathfrak X}} \nc{\fraky}{{\mathfrak y}}
\nc\dop{\delta}
\nc{\Reduce}{{\rm Red}}
\font\cyr=wncyr10 \font\cyrs=wncyr7
\nc{\redt}[1]{\textcolor{red}{#1}}
\nc{\li}[1]{\textcolor{red}{Li:#1}} 
\nc{\lio}[1]{}
\nc{\sz}[1]{\textcolor{green}{sz:#1}}
\nc{\szo}[1]{}
\nc{\xg}[1]{\textcolor{purple}{xg:#1}}
\nc{\ws}[1]{\textcolor{blue}{{#1}}} 
\nc{\wsc}[1]{\textcolor{blue}{ws:#1}} 
\nc{\wsco}[1]{}
\nc{\wsn}[1]{\textcolor{magenta}{#1}} 
\renewcommand{\theenumi}{{\it\alph{enumi}}}
\begin{document}
\title{Rota-Baxter type operators, rewriting systems and
Gr\"obner-Shirshov bases}

\author{Xing Gao}
\address{School of Mathematics and Statistics,
Key Laboratory of Applied Mathematics and Complex Systems,
Lanzhou University, Lanzhou, Gansu, 730000, P.R. China}
\email{gaoxing@lzu.edu.cn}

\author{Li Guo}
\address{
    Department of Mathematics and Computer Science,
         Rutgers University,
         Newark, NJ 07102, USA}
\email{liguo@rutgers.edu}

\author{William Y. Sit}
\address{Dept. of Math., The City College of The City
University of New York, New York, NY 10031, USA} \email{wyscc@sci.ccny.cuny.edu}

\author{Shanghua Zheng}
\address{Department of Mathematics,
    Lanzhou University,
    Lanzhou, Gansu 730000, China}
\email{zheng2712801@163.com}

\hyphenpenalty=8000
\date{\today}

\begin{abstract}
In this paper we apply the methods of rewriting systems and Gr\"obner-Shirshov bases to give a unified approach to a class of linear operators on associative algebras. These operators resemble the classic Rota-Baxter operator, and they are called {\it Rota-Baxter type operators}. We characterize a Rota-Baxter type operator by the convergency of a rewriting system associated to the operator. By associating such an operator to a Gr\"obner-Shirshov basis, we obtain a canonical basis for the free algebras in the category of associative algebras with that operator. This construction include as special cases several previous ones for free objects in similar categories, such as those of Rota-Baxter algebras and Nijenhuis algebras.
\end{abstract}

\delete{
\begin{keyword}
Rota's Problem; rewriting systems, Gr\"obner-Shirshov bases; operators; classification; Rota-Baxter type operators.
\end{keyword}
}

\maketitle

\tableofcontents

\hyphenpenalty=8000 \setcounter{section}{0}


\section{Introduction}
Many years ago, G.-C.~Rota~\mcite{Ro2} posed the question of finding all the algebraic identities that could be satisfied by {some} linear operator defined on {some} associative algebra.
He wrote:
\begin{quote}
In a series of papers, I have tried to show that other linear operators satisfying algebraic identities may be of equal importance in studying certain algebraic phenomena, and I have posed the problem of finding all possible algebraic identities that can be satisfied by a linear operator on an algebra. Simple computations show that the possibility are very few, and the problem of classifying all such identities is very probably completely solvable. 
\end{quote}

Rota was most interested in the following operators arising from analysis, probability and combinatorics:
\begin{eqnarray*}
 \text{Endomorphism operator} &\quad&
d(xy)=d(x)d(y), \\
 \text{Differential operator} &\quad&
d(xy)=d(x)y+xd(y), \\
 \text{Average operator} &\quad&
P(x)P(y)=P(xP(y)),  \\
\text{Inverse average operator} &\quad& P(x)P(y)=P(P(x)y),
\\
 \text{(Rota-)Baxter operator} &\quad&
 P(x)P(y)=P(xP(y)+P(x)y+\lambda xy),\\
{\text{of weight\ } \lambda}&\quad &\quad \text{ where } \lambda \text{ is a fixed constant},\\
\text{Reynolds operator}
  &\quad& P(x)P(y)=P(xP(y)+P(x)y-P(x)P(y)). \\
\end{eqnarray*}

The importance of the endomorphism operator is well-known for the role automorphisms (bijective endomorphisms) play in Galois theory. The differential operator is essential in analysis and its algebraic generalizations led to the development of differential algebra \citep{Kol,SP}, difference algebra~\citep{Co,AL}, and quantum differential operators~\citep{LR}. The other operators are also important. For example, the Rota-Baxter operator, which originated from probability study~\citep{Ba}, is closely related to the classical Yang-Baxter equation, as well as to operads, to combinatorics and, through the Hopf algebra framework of Connes and Kreimer, to the renormalization of quantum field theory \citep{Ag,AGKO,BBGN,Bai,C-K1,EGK,EGM,EG1,Gub,G-K1,G-Z}.

In recent years, new linear operators have emerged from algebraic studies, combinatorics, and physics~\mcite{CGM,G-K3,Le}. Examples are:
\begin{eqnarray*}
\text{Differential operator of weight} ~ \lambda &\quad&
 d(xy)=d(x)y+x d(y)+\lambda d(x)d(y), \\
&\quad& \quad\text{ where } \lambda \text{ is a fixed constant}, \\
\text{Nijenhuis operator} &\quad& P(x)P(y)=P(xP(y)+P(x)y-P(xy)),\\
\text{Leroux's TD operator} &\quad&
P(x)P(y)=P(xP(y)+P(x)y-xP(1)y).\\
\end{eqnarray*}
These operators in the above two lists can be grouped into two classes. The first two operators in the first list and the first operator in the second satisfy an identity of the form $d(xy)=N(x,y),$ where $N(x,y)$ is some algebraic expression involving $x, y$, and the operator $d$. They belong to the class of {\bf differential type operators} (where $N(x,y)$ is required to satisfy some extra conditions), so called because of their resemblance to the differential operator. The remaining operators satisfy an identity of the form $P(x)P(y)=P(B(x,y))$ where $B(x,y)$ is some algebraic expression involving $x, y$, and the operator $P$.  These belong to the class of {\bf Rota-Baxter type operators} (where $B(x,y)$ is required to satisfy some extra conditions), so called because of their resemblance to the Rota-Baxter operator.

It is interesting to observe that the above operators of differential type share similar properties. Their free objects are constructed in the same way and their studies in general follow parallel paths. The same can be said {of Rota-Baxter} type operators. After the free objects of Rota-Baxter algebras were constructed in~\mcite{EG1,Gub}, similar constructions have been obtained for free objects {for Nijenhuis algebras}~\mcite{LG} and {for TD algebras}~\mcite{ZhC}. Likewise, the constructions of free commutative Nijenhuis algebras and free commutative TD algebras in~\mcite{EL} are similar to the construction {for free} commutative Rota-Baxter algebras in~\mcite{G-K1}. Other instances of similar constructions can be found in~\mcite{Agg,Ca,ZG}. Furthermore, these operators share similar applications: for example, {for the double structures in mathematical physics (especially in} {the Lie algebra context)}~\mcite{Bai2,BGN,STS} {and for the splitting of associativity in} {mathematics}~\mcite{Ag,BBGN,Lo}. It will be helpful to study these two classes of operators under one theory. On the one hand, we will be able to treat all operators of each type uniformly; for instance in the construction of their free objects. On the other hand, we may discover other operators in these two classes, eventually give a complete list of these operators, and make some progress towards solving Rota's problem.

Following this approach, a systematic investigation on differential type operators is carried out in~\mcite{GSZ} by studying the operated polynomial identities they satisfy in the framework of operated algebras~\mcite{Gop}. These identities are then characterized by means of their rewriting systems~\mcite{BN} and associated Gr\"obner-Shirshov bases~\mcite{BCC,BCL,BCQ}.

A conjectured list of Rota-Baxter type operators {is} provided in~\mcite{GSZ} based on symbolic computation done in~\mcite{Sit}. The study of Rota-Baxter type operators, however, is more challenging than their differential counterpart as can be expected already by comparing integral calculus with differential calculus. Nevertheless the method of Gr\"obner-Shirshov bases has been successfully applied to the study of Rota-Baxter algebras, differential Rota-Baxter algebras and integro-differential algebras~\mcite{BCD,BCQ,GGZ}. We show in this paper that the methods of rewriting {systems} and Gr\"obner-Shirshov bases apply more generally to Rota-Baxter type algebras as well. As consequences we obtain free objects in these operated algebra categories and verify that the operators in the above-mentioned conjectured list are indeed of Rota-Baxter type.

In Section~\mref{sec:back}, we associate, to each operated polynomial identity $\phi(x,y)=0$ of a certain form, a family of rewriting systems on free operated algebras, and define a linear operator satisfying that identity to be of Rota-Baxter type if the rewriting systems have some additional properties. We also restate the conjectured list of 14 Rota-Baxter type algebras announced in~\mcite{GSZ}. In Section~\ref{sec:rbcr}, we show that a linear operator is of Rota-Baxter type if and only if the rewriting systems associated with the identity it satisfies are convergent. In Section~\mref{sec:GSu}, we introduce the notion of a monomial order on free operated algebras that are compatible with the rewriting systems, which enables us to characterize Rota-Baxter type algebras in terms of Gr\"obner-Shirshov bases. We show that the linear operator is of Rota-Baxter type precisely when the set of operated polynomials derived from $\phi$ is a Gr\"obner-Shirshov basis. When this is the case, we give an explicit construction of a free object in the category of operated algebras satisfying the identity $\phi=0$.  In Section~\mref{sec:evi}, we establish a monomial order needed in Section~\mref{sec:GSu} and verify that the identities in the conjectured list indeed define Rota-Baxter type operators and algebras. Thus, we have achieved a uniform construction of the free objects for all the 14 categories of operated algebras whose defining identities are listed in the conjecture. Our construction generalizes and includes as special cases the known constructions for various operated algebras~\mcite{BCD,BCQ,Ca,EG1,LG}.

Our characterization of Rota-Baxter type operators and identities in terms of Gr\"obner-Shirshov bases and convergent rewriting systems reveals the power of this general approach. It would be interesting to further apply rewriting system and  Gr\"obner-Shirshov bases techniques to study these operators, with the resolution of Rota's classification problem in mind.

\noindent
{\bf Convention. } Throughout this paper, we fix a commutative unitary ring $\bfk$. By an algebra we mean an associative (but not necessarily commutative) unitary $\bfk$-algebra, unless the contrary is specified. Following common terminology, a non-unitary algebra means one that may not have an identity element.

\section{Rota-Baxter type operators and rewriting systems}
\mlabel{sec:back}

In this section, we recall the construction of free operated algebras that gives operated polynomial identity algebras. We also obtain results on term-rewriting systems for free $\bfk$-modules. These concepts and results provide us with a framework to define Rota-Baxter type operators for algebras and to give a conjectured list of these operators together with the identity each must satisfy. They also prepare us for our main tasks in later sections.

We begin by reviewing some background on operated algebras.

\subsection{Free operated algebras}

The construction of free operated algebras was given in~\mcite{Gop,GSZ}. See also~\mcite{BCQ}. We reproduce that construction here to review the notation.

\begin{defn}
{\rm An {\bf operated monoid} (resp. {\bf operated $\bfk$-algebra}, resp. {\bf operated $\bfk$-module}) is a monoid (resp. $\bfk$-algebra, resp. $\bfk$-module) $U$ together with a map (resp. $\bfk$-linear map, resp. $\bfk$-linear map) $P: U\to U$. A morphism from an operated monoid\, (resp. $\bfk$-algebra, resp. $\bfk$-module) $U$ to an operated monoid (resp. $\bfk$-algebra, resp. $\bfk$-module) $V$ is a monoid (resp. $\bfk$-algebra, resp. $\bfk$-module) homomorphism $f :U\to V$ such that $f \circ P= P \circ f$. } \mlabel{de:mapset}
\end{defn}

Let $Y$ be a set, let $M(Y)$ be the free monoid on $Y$ with identity $1$, and let $S(Y)$ be the free semigroup on $Y$. Let $\lc Y\rc:=\{\lc y\rc \,|\, y\in Y\}$ denote a set indexed by $Y$, but disjoint from $Y$.

Let $X$ be a given set. We will construct the free operated monoid over  $X$ as the limit of a directed system $$\{\,\iota_{n}: \mapmonoid_n\to \mapmonoid_{n+1}\, \}_{n=0}^\infty$$ of free monoids $\mapmonoid_n:=\mapmonoid_n(X)$, where the transition morphisms $\iota_{n}$ will be natural embeddings. For this purpose, let $\mapmonoid_0=M(X)$, and let
$$ \mapmonoid_1:=M(X\cup \lc \mapmonoid_0\rc).$$ Let $\iota_{0}$
be the natural embedding $\iota_{0}:\mapmonoid_0 \hookrightarrow     \mapmonoid_1$. 
Assume by induction that for some $n\geq 0$, we have defined, for $0\leq i\leq n+1$, the free monoids $\mapmonoid_i$  with the properties that for $0 \leq i\leq n$, we have $\mapmonoid_{i+1}=M(X\cup \lc\mapmonoid_{i}\rc )$
and natural embeddings  $ \iota_i: \mapmonoid_i \to \mapmonoid_{i+1}$.  Let
\begin{equation}
 \mapmonoid_{n+2}:=M(X\cup \lc\mapmonoid_{n+1}\rc ).
 \mlabel{eq:frakm}
 \end{equation}
The identity map on $X$ and the embedding $\iota_{n}$ together induce an injection
\begin{equation}
\iota_{n+1}: X\cup \lc\mapmonoid_{n}\rc \hookrightarrow
    X\cup \lc \mapmonoid_{n+1} \rc,
\mlabel{eq:transet}
\end{equation}
which, by the functoriality of $M$, extends to an embedding (still denoted by $\iota_{n+1}$) of free monoids 
\begin{equation}
 \iota_{n+1}: \mapmonoid_{n+1} = M(X\cup \lc\mapmonoid_{n}\rc)\hookrightarrow
    M(X\cup \lc \mapmonoid_{n+1}\rc) = \mapmonoid_{n+2}. \mlabel{eq:tranm}
\end{equation}

This completes our inductive definition of the {directed} system. Let
$$ \mapm{X}:=\frakM(X):=\bigcup_{n\geq 0}\frakM_n=\dirlim
\frakM_n$$ be the direct limit of the system. Elements of $\frakM_n\backslash \frakM_{n-1}$ are said to have {\bf depth} $n$. We note that $\frakM(X)$ is a monoid, and by taking direct limit on both sides of $\frakM_n = M(X\cup \lc \frakM_{n-1}\rc)$, we obtain
\begin{equation}\frakM(X)=M(X\cup \lc \frakM(X)\rc).
\mlabel{eq:omid}\end{equation}

Let $\kdot\frakM(X)$ be the (free) $\bfk$-module with basis
$\frakM(X)$.
 Since the basis is a monoid, the
multiplication on $\frakM(X)$ can be extended via linearity to turn the
$\bfk$-module $\kdot\frakM(X)$ into a $\bfk$-algebra, which
we denote by $\bfk\frakM(X)$. Similarly, we can extend the operator
$\lc\ \rc: \frakM(X) \to \frakM(X)$, which takes $w \in \frakM(X)$ to
$\lc w\rc$, to an operator $P$ on $\bfk\frakM(X)$ by $\bfk$-linearity
and turn the $\bfk$-algebra $\bfk\frakM(X)$ into an operated
$\bfk$-algebra, which we shall denote by $\bfk\mapm{X}$ (or by abuse,
$\bfk\frakM(X)$, since as sets, $\frakM(X)=\mapm{X}$). If $X$ is a finite set, we may also just list its elements, as in $\bfk\mapm{x,y}$ when $X = \{x, y\}$.

\begin{lemma}{\bf \mcite{Gop}}
Let $i_X:X \to \frakM(X)$ and $j_X: \frakM(X) \to \bfk\mapm{X}$ be the natural embeddings. Then, with structures as above,
\begin{enumerate}
\item
the triple $(\frakM(X),\lc\ \rc, i_X)$ is the free operated monoid on $X$; and
\mlabel{it:mapsetm}
\item
the triple $(\bfk\mapm{X},P, j_X\circ i_X)$ is the free operated unitary $\bfk$-algebra
on $X$. \mlabel{it:mapalgsg}
\end{enumerate}
\mlabel{pp:freetm}
\end{lemma}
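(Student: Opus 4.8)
The plan is to verify the two universal properties directly, constructing the required morphisms by induction on depth and then passing to the direct limit; part (2) will then follow formally from part (1) by composing with the free-algebra adjunction.

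For part (1), let $(U,Q)$ be an operated monoid and $f\colon X\to U$ a map of sets; we build an operated monoid homomorphism $\bar f\colon \frakM(X)\to U$ with $\bar f\circ i_X=f$. Since $\frakM_0=M(X)$ is the free monoid on $X$, the map $f$ extends uniquely to a monoid homomorphism $\bar f_0\colon \frakM_0\to U$. Assuming $\bar f_n\colon \frakM_n\to U$ has been defined, define a set map $X\cup\lc\frakM_n\rc\to U$ by $x\mapsto f(x)$ for $x\in X$ and $\lc w\rc\mapsto Q(\bar f_n(w))$ for $w\in\frakM_n$; since $\frakM_{n+1}=M(X\cup\lc\frakM_n\rc)$ is free on this set, this extends uniquely to a monoid homomorphism $\bar f_{n+1}\colon \frakM_{n+1}\to U$. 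One checks on generators (hence everywhere) that $\bar f_{n+1}\circ\iota_n=\bar f_n$, so the $\bar f_n$ are compatible with the directed system and induce a monoid homomorphism $\bar f\colon \frakM(X)=\dirlim\frakM_n\to U$. The relation $\bar f\circ\lc\ \rc=Q\circ\bar f$ holds because any $w\in\frakM(X)$ lies in some $\frakM_n$, whence $\lc w\rc\in\frakM_{n+1}$ and $\bar f_{n+1}(\lc w\rc)=Q(\bar f_n(w))=Q(\bar f(w))$ by construction. For uniqueness, if $g\colon \frakM(X)\to U$ is any operated monoid homomorphism with $g\circ i_X=f$, then $g|_{\frakM_0}=\bar f_0$ by freeness of $M(X)$, and inductively $g|_{\frakM_{n+1}}$ agrees with $\bar f_{n+1}$ on generators, since $g(\lc w\rc)=Q(g(w))=Q(\bar f_n(w))$, hence on all of $\frakM_{n+1}$; therefore $g=\bar f$.

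For part (2), given an operated unitary $\bfk$-algebra $(A,Q)$ and a set map $h\colon X\to A$, regard $A$ as an operated monoid under its multiplication; by part (1) there is a unique operated monoid homomorphism $\bar h\colon \frakM(X)\to A$ with $\bar h\circ i_X=h$. Since $\bfk\mapm{X}=\bfk\frakM(X)$ is the free $\bfk$-algebra on the monoid $\frakM(X)$, the map $\bar h$ extends uniquely by $\bfk$-linearity to a $\bfk$-algebra homomorphism $\tilde h\colon \bfk\mapm{X}\to A$; it commutes with the operators because $P$ and $Q$ are $\bfk$-linear and $\tilde h$ agrees with $\bar h$ on the basis $\frakM(X)$, where $\tilde h(P(w))=\tilde h(\lc w\rc)=\bar h(\lc w\rc)=Q(\bar h(w))=Q(\tilde h(w))$. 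Uniqueness follows from uniqueness at both stages: any operated $\bfk$-algebra homomorphism $\bfk\mapm{X}\to A$ restricting to $h$ on $X$ restricts on the basis to an operated monoid homomorphism $\frakM(X)\to A$ extending $h$, hence equals $\bar h$, and is then determined on all of $\bfk\mapm{X}$ by $\bfk$-linearity.

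The only genuine point requiring care is the bookkeeping in part (1): one must confirm that the transition embeddings $\iota_n$ are respected by the maps $\bar f_n$, so that the colimit map $\bar f$ is well defined, and that the identity $\bar f\circ\lc\ \rc=Q\circ\bar f$ is verified at the correct filtration level, namely that an element of depth $n$ has its bracket in depth $n+1$. Both are routine once the indices are tracked, and I expect no real obstacle here; this is the standard term-algebra argument adapted to the operated setting, with the directed-limit presentation of $\frakM(X)$ doing the essential work.
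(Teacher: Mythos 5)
Your proof is correct, and it is essentially the standard argument: the paper itself gives no proof of this lemma but cites \cite{Gop}, where the free operated monoid is constructed exactly as this direct limit and the universal property is verified by the same induction on depth, with the free operated algebra then obtained by passing to the monoid algebra $\bfk\frakM(X)$ as in your part (2). No gaps; the compatibility of the maps $\bar f_n$ with the transition embeddings and the verification of $\bar f\circ\lc\ \rc=Q\circ\bar f$ at the shifted filtration level are exactly the points the cited proof also handles.
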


{\it For the rest of this paper}, we will use the infix notation
$\lc r \rc$ interchangeably with $P(r)$ for any $r \in R$ where $R$
is an operated algebra with operator $P$; for example, when $R =
\bfk\mapm{X}$.

\begin{defn} Elements of $\frakM(X)$ are called {\bf bracketed words} or
{\bf bracketed monomials in $X$}. An element $\phi \in \bfk\mapm{X}$ will be
called an {\bf operated} or {\bf bracketed polynomial in $X$ with coefficients in
$\bfk$}, and we will implicitly assume that $\phi \notin \bfk$,
unless otherwise noted. When there is no danger of confusion, we often omit the adjective ``bracketed.''  \mlabel{def:bwbp}\end{defn}

The following notions will be needed for Sections \ref{sec:GSu} and
\ref{sec:evi}.

\begin{defn}
Let $u\in \frakM(X)$, $u \ne 1$. By Eq.\,(\mref{eq:omid}), we may
 write $u$ as a product $v_1\cdots v_k$
uniquely for some $k$ with $v_i\in X\cup \lc \frakM(X)\rc$ for
$1\leq i\leq k$. We call $k$ the {\bf breadth} of $u$ and denote it
by $|u|$. If $u = 1 \in \frakM(X)$, we define $|u| = 0$.
Alternatively, by combining adjacent
factors of $u=v_1 \dots v_k$ that belong to $X$ into a monomial
belonging to $M(X)$ and by inserting $1 \in M(X)$ between two adjacent factors
of the form $\lc \frakx \rc$ where $\frakx \in \frakM(X)$, we may
write $u$ uniquely in the canonical form
\begin{equation}
u=u_0\lc \mpu_1\rc u_1 \cdots \lc \mpu_r \rc u_r, {\rm\ where\ } u_0,\cdots, u_r\in
M(X) {\rm\ and\ } \mpu_1,\cdots, \mpu_r \in \frakM(X). \mlabel{eq:decom}
\end{equation}
We define the {\bf $P$-breadth} of $u$ to be $r$ and denote it by
$|u|_P$. Note that $|u|_P = 0$ if and only if $u =u_0 \in M(X)$. We
further define the {\bf operator degree} $\deg_P(u)$ of a monomial
$u$ in $\bfk\mapm{\opset}$ to be the total number of occurrences of
the operator $\lc\ \rc$ in the monomial $u$.\mlabel{def:breadths}
\end{defn}

\subsection{Operated polynomial identity algebras}

Let $k \geq 1$ and $X=\{x_1,x_2,\cdots,x_k\}$, let $\phi:=\phi_{\lc\ \rc}(x_1,\cdots,x_k)\in \bfk\mapm{X}$. We call $x_1,\cdots,x_k$ the {\bf argument variables} and $\lc\ \rc$ the {\bf operator variable}. When $\lc\ \rc$ does not appear in $\phi$, then $\phi=\phi(x_1,\cdots,x_k)$ is just a polynomial and its evaluation can be defined as usual by specializing the argument variables $x_1,\cdots,x_k$. We next define its evaluation in general. Let $R$ be an operated algebra with operator $P$, and let $r = (r_1, \cdots, r_k) \in R^k$. By the universal property of the free operated algebra $\bfk\mapm{X}$, the map $f_r: \{x_1,\cdots,x_k\}\to R$ that sends $x_i$ to $r_i$ induces a unique morphism $\free{f_r}:\bfk\mapm{x_1,\cdots,x_k}\to R$ of operated algebras that extends $f_r$. Define the {\bf evaluation map} $\phi_{(R,P)}:R^k\to R$ by: \footnote{Later in this paper, we may simplify the notation $\phi_{(R, P)}$ to $\phi_R$, or emphasize the multiplication $\ast$ of $R$ and replace $(R, P)$ by the triple $(R,\ast, P)$. On the other hand, if there is no possibility of confusion of what $R$ and $P$ should be, we may write $\phi(r_1, \dots, r_k)$ for $\phi_{(R, P)}(r_1, \dots, r_k)$.\mlabel{fn:algebra}}
\begin{equation}
\phi_{(R,P)}(r_1,\cdots,r_k) := \free{f_r}(\phi_{\lc\,\rc}(x_1,\cdots,x_k)), \quad r = (r_1, \cdots, r_k) \in R^k. \mlabel{eq:phibar}
\end{equation}

We call $\phi_{(R, P)}(r_1,\cdots,r_k)$ the {\bf evaluation of the operated polynomial $\phi_{\lc\ \rc}(x_1,\cdots,x_k)$} at the {\bf point $(r_1,\cdots,r_k)$} with {\bf operator $P$.} When $\lc \ \rc$ does not appear in $\phi$, this reduces to the usual notion of evaluation of a polynomial at the point $(r_1,\cdots,r_k)$.

\begin{defn}
Let $\phi\in \bfk\mapm{x_1,\cdots,x_k}$. We say that an operated algebra $R$ with operator $P$ is a {\bf $\phi$-algebra} and that $P$ is a {\bf $\phi$-operator}, if ${\phi}_{(R,P)}(r_1,\dots,r_k)=0$ for all $r_1,\dots,r_k\in R$. An {\bf operated polynomial identity algebra} is any $\phi$-algebra for some $\phi$. If $R$ is a $\phi$-algebra, we will say loosely that $\phi=0$ (or by abuse, $\phi$) is an {\bf operated polynomial identity} ({\bf OPI}) satisfied by $R$. \mlabel{de:pio}
\end{defn}

\begin{defn}
Given any set $\genset$, and a subset $S \subset \bfk\mapm{Z}$, the {\bf operated ideal $\Id(S)$ of ~$\bfk\mapm{Z}$ generated by $S$} is the smallest operated ideal containing $S$.
\mlabel{de:repgen}
\end{defn}

Given any set $\genset$, let $I_\phi(\genset)$ be the operated ideal of $\bfk\mapm{Z}$ generated by the set
\begin{equation}
S_\phi(Z) := \left\{\,{\phi}_{(\bfk\mapm{Z},\lc\,\rc)}(u_1,\cdots,u_k) \mid u_1,\cdots,u_k\in \bfk\mapm{Z}\,\right\}.\mlabel{eq:genphi}\end{equation} Then the quotient algebra $\bfk_\phi\mapm{Z} := \bfk\mapm{Z}/I_\phi(\genset)$ has a natural structure of a $\phi$-algebra.

\begin{prop} \cite[Theorem~3.5.6]{BN} Let $\phi\in
\bfk\mapm{x_1,\cdots,x_k}$. Given any set $\genset$, the quotient operated algebra $\bfk_\phi\mapm{Z}$ is the free $\phi$-algebra on $\genset$. \mlabel{pp:frpio}
\end{prop}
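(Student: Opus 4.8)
The plan is to verify the universal property directly: given a $\phi$-algebra $(R, P)$ and a set map $g \colon Z \to R$, we must produce a unique morphism of operated algebras $\bar g \colon \bfk_\phi\mapm{Z} \to R$ with $\bar g \circ (\text{natural map}) = g$. The construction factors through the free operated algebra $\bfk\mapm{Z}$, so the proof splits naturally into three parts: lifting $g$ to $\bfk\mapm{Z}$, checking the lift kills the ideal $I_\phi(Z)$, and checking uniqueness.

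First I would use Lemma~\ref{pp:freetm}(\ref{it:mapalgsg}): since $\bfk\mapm{Z}$ is the free operated unitary $\bfk$-algebra on $Z$, the map $g$ extends uniquely to a morphism $\tilde g \colon \bfk\mapm{Z} \to R$ of operated algebras. The key step is then to show $I_\phi(Z) \subseteq \ker \tilde g$. Since $\ker \tilde g$ is an operated ideal (being the kernel of an operated algebra morphism) and $I_\phi(Z)$ is by Definition~\ref{de:repgen} the smallest operated ideal containing the generating set $S_\phi(Z)$ of Eq.~(\ref{eq:genphi}), it suffices to show $\tilde g$ annihilates each generator $\phi_{(\bfk\mapm{Z}, \lc\,\rc)}(u_1, \dots, u_k)$ for $u_1, \dots, u_k \in \bfk\mapm{Z}$. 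Here I would invoke the compatibility of evaluation with operated algebra morphisms: because $\tilde g$ is a morphism of operated algebras, $\tilde g\bigl(\phi_{(\bfk\mapm{Z}, \lc\,\rc)}(u_1, \dots, u_k)\bigr) = \phi_{(R, P)}\bigl(\tilde g(u_1), \dots, \tilde g(u_k)\bigr)$; this is the functoriality built into Eq.~(\ref{eq:phibar}), since $\widetilde{f_r}$ was defined precisely as the unique operated-algebra morphism extending the point assignment. As $R$ is a $\phi$-algebra, the right-hand side is $0$. Hence $\tilde g$ factors through the quotient, yielding $\bar g \colon \bfk_\phi\mapm{Z} \to R$.

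For uniqueness, suppose $h \colon \bfk_\phi\mapm{Z} \to R$ is any operated algebra morphism agreeing with $g$ on $Z$. Composing with the quotient projection $\pi \colon \bfk\mapm{Z} \to \bfk_\phi\mapm{Z}$ gives an operated algebra morphism $h \circ \pi$ extending $g$; by the uniqueness clause in Lemma~\ref{pp:freetm}(\ref{it:mapalgsg}), $h \circ \pi = \tilde g = \bar g \circ \pi$, and since $\pi$ is surjective, $h = \bar g$. Finally, that $\bfk_\phi\mapm{Z}$ is itself a $\phi$-algebra is the remark already made just before the statement, which I would recall: for any $v_1, \dots, v_k \in \bfk_\phi\mapm{Z}$, lift them to $u_1, \dots, u_k \in \bfk\mapm{Z}$, and then $\phi_{(\bfk_\phi\mapm{Z}, \bar P)}(v_1, \dots, v_k) = \pi\bigl(\phi_{(\bfk\mapm{Z}, \lc\,\rc)}(u_1, \dots, u_k)\bigr) = 0$ because the argument lies in $S_\phi(Z) \subseteq I_\phi(Z)$.

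The only genuinely delicate point is the compatibility identity $\tilde g\bigl(\phi_{(\bfk\mapm{Z})}(u_1,\dots,u_k)\bigr) = \phi_{(R,P)}\bigl(\tilde g(u_1),\dots,\tilde g(u_k)\bigr)$ — everything else is formal. This identity is really a naturality statement: both sides equal $\widetilde{f}(\phi_{\lc\,\rc}(x_1,\dots,x_k))$ where $f$ runs $x_i$ to the appropriate target, and one checks it by observing that $\tilde g \circ \widetilde{f_u} \colon \bfk\mapm{X} \to R$ (with $f_u$ sending $x_i \mapsto u_i$) and $\widetilde{f_{\tilde g(u)}} \colon \bfk\mapm{X} \to R$ are two operated algebra morphisms sending each $x_i$ to $\tilde g(u_i)$, hence coincide by the universal property of $\bfk\mapm{X}$. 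I would spell this out as a short lemma or inline argument, since it is the crux; the reference to \cite[Theorem~3.5.6]{BN} presumably packages exactly this quotient-by-an-ideal universal property in the relevant generality, so one may alternatively just cite it and note that the operated ideal $I_\phi(Z)$ is by construction the ideal of relations forced by imposing $\phi = 0$.
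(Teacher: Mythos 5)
Your proof is correct. The paper itself gives no argument for this proposition --- it is stated with only a citation to \cite{BN} --- so there is no internal proof to compare against; your direct verification of the universal property (lifting $g$ through the free operated algebra $\bfk\mapm{Z}$, checking that the generators of $I_\phi(Z)$ are annihilated, and deducing uniqueness from surjectivity of the projection) is exactly the standard argument that citation is meant to package. You are also right to isolate the compatibility identity $\tilde{g}\bigl(\phi_{(\bfk\mapm{Z},\lc\,\rc)}(u_1,\dots,u_k)\bigr)=\phi_{(R,P)}\bigl(\tilde{g}(u_1),\dots,\tilde{g}(u_k)\bigr)$ as the only step needing justification, and your argument for it --- both sides arise from operated algebra morphisms $\bfk\mapm{x_1,\dots,x_k}\to R$ agreeing on the $x_i$, hence equal by the universal property of Lemma~\mref{pp:freetm} --- is sound, as is the same naturality applied to $\pi$ to show the quotient is itself a $\phi$-algebra.
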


The following definitions are adapted from~\mcite{BCQ,GSZ} and supersede those in~\mcite{GSZ}, where the definitions were only preliminary.

\begin{defn}
Let $Z$ be a set, let $\star$ be a symbol not in $Z$, and let $Z^\star = Z \cup \{\star\}$. By a {\bf $\star$-bracketed word} (respectively, {\bf $\star$-bracketed polynomial}) on $Z$, we mean any word in $\mapm{Z^\star}=\frakM({Z^\star})$ (respectively, polynomial in $\kdot\frakM(Z^\star)$) with exactly one\footnote{counting  multiplicities; thus $q = \star^2$ and $q = \lc \star \rc^2$ are not $\star$-bracketed words.} occurrence of $\star$,  The set of all $\star$-bracketed words (respectively, $\star$-bracketed expressions) on $Z$ is denoted by $\mapm{Z}^\star$ or $\frakM^{\star}(Z)$ (respectively, $\bfk^\star\mapm{Z}$). \mlabel{def:starbw}
\end{defn}

Let $q\in \mapm{Z}^\star$ and $u \in  \frakM({Z})$. We will use $q|_{u}$ or $q|_{\star \mapsto u}$ to denote the bracketed word on $Z$ obtained by replacing the symbol $\star$ in $q$ by $u$. Next, we extend by linearity this notion to elements $s=\sum_i c_i u_i\in \bfk\frakM{(Z)}$, where $c_i\in\bfk$ and $u_i\in \frakM{(Z)}$, that is, we define in this case $q|_s$ to be the bracketed expression:
\begin{equation*}
 q|_{s}:=\sum_i c_i q|_{u_i}\,.
\end{equation*}  Finally, we extend again by linearity this notation
to any $q \in \bfk^\star\mapm{Z}$.
Note that in either of these generalized settings, $q|_s$ is usually not a
bracketed word but a bracketed polynomial.

With the above notation, we can now describe the operated ideal $\Id(S)$ generated by a subset $S \subseteq \bfk\mapm{Z}$. It is given \mcite{BCQ,GSZ} by
\begin{equation}\Id(S) = \left\{ \sum_{i=1}^k c_i q_i|_{s_i}\,\Big|\ k\geq 1
{\rm\ and\ } c_i\in \bfk,
q_i\in \frakM^{\star}(Z), s_i\in S {\rm\ for\ } 1\leq i\leq k\right\}.
\mlabel{eq:repgen}\end{equation}
Note that neither the $q_i$'s nor the $s_i$'s ($1 \leq i \leq k$) appearing in the above summation expression need be distinct.

\begin{defn}A bracketed word $u \in \frakM(Z)$ is a {\bf subword} of another
bracketed word $w \in \frakM(Z)$ if $w  = q|_u$ for some $q \in
\frakM^\star(Z)$, where the specific occurrence of $u$ in $w$ is
defined by $q|_u$ (that is, by the $\star$ in $q$). To make this
more precise, the set of character positions (when a bracketed word is viewed
as a string of characters) occupied by the subword $u$ in the word
$w$ under the substitution $q|_u$ is called the {\bf placement of
$u$ in $w$ by $q$}. We denote this placement by the pair $(u,q)$. A subword $u$ may appear at multiple locations (and hence have distinct placements using distinct $q$'s) in a
bracketed word $w$. \mlabel{de:subword}
\end{defn}

\begin{exam}
Let $Z=\{ x, y\,\}$. Consider placements in the monomial $w = \lc x y x y\rc$.
\begin{enumerate}
\item The subword $u:=x$ appears at two locations in $w$. Their placements are $(u,q_1)$ and $(u,q_2)$ where $q_1=\star\lc x\rc$ and $q_2=x\lc \star\rc$.
\item   The placement of $x$ in $w$ by
$q_1 = \lc xy\star y \rc$ is included as a (proper) subset of the
placement of $xy$ in $w$ by $q_2 = \lc xy \star \rc$. We say $(x, q_1)$ and $(x,q_2)$ are {\it nested} in $w$.
\item The two placements for the subword $x$ are disjoint, as are the two for $xy$. We say in each case, the two placements are {\it separated in $w$}.
\item Each of the two placements for $xy$ overlaps partially the unique
placement of $yx$ in $w$. We say they are {\it intersecting}. The two placements of $xx$ in $xxx$
are also intersecting.
\end{enumerate}
The notions illustrated by these examples will be formally defined later under Definition \mref{defn:bwrel}.
\mlabel{ex:subwords}
\end{exam}

\subsection{Term-rewriting on free $\bfk$-modules}
\mlabel{sec:TRS}

In this subsection, we recall some basic definitions and develop new results for term-rewriting systems when they are specialized for free $\bfk$-modules with a given basis and satisfy a simple condition. In the next subsection, we apply them to the Rota-Baxter term rewriting systems on operated $\bfk$-algebras.

\begin{defn} Let $V$ be a free $\bfk$-module with a given $\bfk$-basis $W$.  For $f \in V$, when $f$ is expressed as a unique linear combination of $w \in W$ with coefficients in $\bfk$, the {\bf support} $\Supp(f)$ of $f$ is the set consisting of $w \in W$ appearing in $f$ (with non-zero coefficients).
Let $f,g\in V$. We use $f \dps g$ to indicate the relation that $\Supp(f) \cap \Supp(g) = \emptyset$. If this is the case, we say $f + g$ is a {\bf direct sum} of $f$ and $g$, and by abuse,\footnote{Whether $\dps$ refers to the relation or the direct sum will always be clear from the context.} we use $f\dps g$ also for the sum $f+ g$. \mlabel{def:dps}
\end{defn}

Note $\Supp(0) = \emptyset$ and hence $f \dps 0$ for any $f \in V$. We record the following obvious properties of $\dotplus$.
\begin{lemma} Let $V$ be a free $\bfk$-module with a $\bfk$-basis $W$. Let $a,b,c \in \bfk$, let $w \in W$, and let $f, g, h \in V$. Then
\begin{enumerate}
\item The basis element $w$ is not in $\Supp(f)$ if and only if $w+f=w\dps f$. \mlabel{item:add1}
\item
If $f \dotplus g$, then $af\dotplus bg$, and if furthermore $f \dotplus h$, then $af\dotplus (bg + ch)$.
\mlabel{item:add2}
\end{enumerate}
\mlabel{lem:add}
\end{lemma}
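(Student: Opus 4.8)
The plan is to reduce both claims to two elementary facts about supports: for any scalar $c\in\bfk$ and any $v\in V$ one has $\Supp(cv)\subseteq\Supp(v)$, and for any $v_1,v_2\in V$ one has $\Supp(v_1+v_2)\subseteq\Supp(v_1)\cup\Supp(v_2)$. Both follow at once by expanding the elements in terms of the basis $W$ and noting which coefficients can possibly be nonzero (for the first, $\Supp(cv)\subsetneq\Supp(v)$ can happen when $c$ is zero or a zero divisor, but the inclusion always holds; for the second, a coefficient in $v_1+v_2$ is a sum of the corresponding coefficients in $v_1$ and $v_2$, so it can only be nonzero if one of those is). These two inclusions, together with the definition of $\dotplus$ as disjointness of supports, are all that is needed.

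For part \eqref{item:add1}, I would simply unwind Definition \ref{def:dps}. By the stated convention, the equation $w+f=w\dps f$ is, by definition, the assertion that the relation $w\dps f$ holds, i.e. that $\Supp(w)\cap\Supp(f)=\emptyset$. Since $w\in W$, we have $\Supp(w)=\{w\}$, and $\{w\}\cap\Supp(f)=\emptyset$ if and only if $w\notin\Supp(f)$. This settles the equivalence immediately.

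For part \eqref{item:add2}, assume $f\dotplus g$, that is, $\Supp(f)\cap\Supp(g)=\emptyset$. Using $\Supp(af)\subseteq\Supp(f)$ and $\Supp(bg)\subseteq\Supp(g)$, we get $\Supp(af)\cap\Supp(bg)\subseteq\Supp(f)\cap\Supp(g)=\emptyset$, so $af\dotplus bg$. If in addition $f\dotplus h$, then from $\Supp(bg+ch)\subseteq\Supp(g)\cup\Supp(h)$ we obtain $\Supp(af)\cap\Supp(bg+ch)\subseteq\Supp(f)\cap(\Supp(g)\cup\Supp(h))=(\Supp(f)\cap\Supp(g))\cup(\Supp(f)\cap\Supp(h))=\emptyset$, whence $af\dotplus(bg+ch)$.

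There is no genuine obstacle here; the only point deserving a moment's care is the notational overloading in part \eqref{item:add1}, where $\dps$ serves simultaneously as a relation symbol and as a decorated plus sign, so one should state explicitly that ``$w+f=w\dps f$'' is to be read as the claim that this direct-sum decoration is legitimate, which is precisely the relation $\Supp(w)\cap\Supp(f)=\emptyset$.
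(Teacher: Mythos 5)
Your proof is correct, and since the paper records this lemma as an "obvious" property of $\dps$ without giving a proof, your elementary verification via the two support inclusions $\Supp(cv)\subseteq\Supp(v)$ and $\Supp(v_1+v_2)\subseteq\Supp(v_1)\cup\Supp(v_2)$ is exactly the intended argument. Your remark that the first inclusion may be strict over a general commutative ring $\bfk$ (zero divisors) but that only the inclusion is needed is a sensible point of care, as is your explicit reading of the overloaded notation in part (\ref{item:add1}).
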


\begin{defn} Let $V$ be a free $\bfk$-module with a $\bfk$-basis $W$. For $f \in V$ and $w \in \Supp(f)$, let the coefficient of $w$ in $f$ be $c_w$. We define the {\bf $w$-complement of $f$} to be $R_w(f) := f - c_w w \in V$, so that $f = c_w w \dps R_w(f)$. \mlabel{item:wcomp}
\end{defn}

\begin{defn} Let $V$ be a free $\bfk$-module with a $\bfk$-basis $W$.  A {\bf term-rewriting system $\Pi$ on $V$ with basis $W$} is a binary relation $\Pi \subseteq W \times V$.
\begin{enumerate}
\item We say the rewriting system $\Pi$ is {\bf \simple} if $t \dps v$ for all $(t,v)\in \Pi$.

\item The image $\pi_1(\Pi)$ of $\Pi$ under the first projection map $\pi_1: W \times V \to W$ will be denoted by $T$ or $T(\Pi)$. A {\bf reducible term} (under $\Pi$) is any element $t \in T$.

\item For $(t, v) \in \Pi \subseteq T \times V$, we write $t \to_\Pi v$ and view this as a rewriting rule on $V$, that is, if $f \in V$, $t \in \Supp(f)$ and $c_t \in \bfk$ is the coefficient of $t$ in $f$, then we may apply the rule to $f$ by replacing $t$ with $v$, resulting in a new element $g:= c_t v + R_t(f) \in V$
 and say $f$ {\bf reduces to}, or {\bf rewrites to}, $g$ {\bf in one-step} and indicate any such one-step rewriting by $f \to_\Pi g$, or in more detail, by $\tvarrow{f}{g}{\Pi}$.  \mlabel{item:Trule}

\item The reflexive transitive closure of $\rightarrow_\Pi$ (as a binary relation on $V$) will be denoted by $\astarrow_\Pi$ and we say {\bf $f$ reduces to $g$ with respect to $\Pi$} if $f \astarrow_\Pi g$. \mlabel{item:rtcl}

\item We say {\bf $f \in V$ reduces to $g \in V$ with respect to $\Pi$ in $n$ steps} ($n \geq 1$) and denote this by $f \mastarrow{n}_\Pi g$ if there exist $f_0, f_1, \dots, f_n \in V$ such that $f_i \neq f_{i+1}$ for $i=0, \dots, n-1$ and $f= f_0 \rightarrow_\Pi f_1 \rightarrow_\Pi \cdots \rightarrow_\Pi f_n = g$. An element $f \in V$ is {\bf reducible}\footnote{In certain context, there may be several term-rewriting systems under discussion, in which case, we use {\it $\Pi$-reducible} for reducible. Similar modification will be used for other terms defined below.} if $f \mastarrow{n}_\Pi g$ for some $g \in V$ and $n \geq 1$, otherwise we say $f$ is {\bf irreducible} or {\bf in normal form}. We extend this notation by convention to $f \mastarrow{0} g$, which means $f = g$, and includes, although not necessarily, the case when $f$ is irreducible.
    \mlabel{item:rednstep}

\item Two elements  $f, g \in V$ are {\bf joinable} if there exist $p \in V$ such that $f \astarrow_\Pi p$ and $g \astarrow_\Pi p$; we denote this by $f \downarrow_\Pi g$. If $f, g$ are joinable, the {\bf joinable distance $d^\vee_\Pi(f,g)$ between $f$ and $g$} is the minimum of $m+n$ over all possible $p$ and reductions to $p$, that is,
\begin{equation}
d^\vee_\Pi(f,g):=\min\left\{m+n\,\mid\, \exists p \in V {\rm\ such\ that\ } f \mastarrow{m}_\Pi p,\  g \mastarrow{n}_\Pi p,\  m \geq 0, {\rm\ and\ } n \geq 0\ \right\}.
\mlabel{eq:dist}
\end{equation}
\end{enumerate}
\mlabel{def:ARSbasics}\end{defn}

\begin{defn}
A (term) rewriting system $\Pi$ on $V$ is called
 \begin{enumerate}
 \item {\bf normalizing} if every $f \in V$ reduces to a (not necessarily unique) element in normal form;
 \item {\bf terminating} if there is no infinite chain of one-step reductions $f_0 \rightarrow_\Pi f_1 \rightarrow_\Pi f_2 \cdots $;
\item {\bf confluent} (resp. {\bf locally confluent}) if every fork (resp. local fork) is joinable; and
\item {\bf convergent} if it is both terminating and confluent.
\end{enumerate}
\mlabel{def:ARS}
\end{defn}

\begin{remark} The property that a term-rewriting system is simple is a very weak condition. For example, $\Pi = \{(x,y),(y,x)\}$ for the $\bfk$-submodule $V$ with basis $W=\{x,y\}$ is simple. Every element of $V$ is reducible, none has a normal form, and $\Pi$ is neither normalizing nor terminating, but is confluent. For later applications, further restrictions will be imposed so that, for example, the rewriting system is consistent with the algebraic structure. See Definition~\mref{def:redsys}.
\label{rmk:twpi}
\end{remark}

A well-known result on rewriting systems is Newman's Lemma \cite[Lemma 2.7.2]{BN}.
\begin{lemma} \paren{Newman} A terminating rewriting system is confluent if and only if it is locally confluent. \mlabel{lem:newman}\end{lemma}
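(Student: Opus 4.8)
The plan is to run the classical Noetherian-induction proof of Newman's Lemma. Nothing in the statement uses the $\bfk$-module structure of $V$: confluence and local confluence are properties of the abstract binary relation $\rightarrow_\Pi$ on the underlying set alone, so the argument is exactly the one in \cite[Lemma~2.7.2]{BN}, which I outline for completeness. The direction ``confluent $\Rightarrow$ locally confluent'' is immediate, because a local fork is a pair of single steps $f \rightarrow_\Pi g$, $f \rightarrow_\Pi h$, hence in particular a pair of reductions $f \astarrow_\Pi g$, $f \astarrow_\Pi h$, so confluence already yields $g \downarrow_\Pi h$.

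For the converse I would assume $\Pi$ is terminating and locally confluent and argue by well-founded induction along $\rightarrow_\Pi$. This is legitimate precisely because ``terminating'' says there is no infinite chain $f_0 \rightarrow_\Pi f_1 \rightarrow_\Pi \cdots$, i.e., $\rightarrow_\Pi$ is well-founded; the induction principle then reads: a property holds for every element of $V$ provided that, for each $f$, it holds for $f$ whenever it holds for every $f'$ with $f \rightarrow_\Pi f'$. Call an element $f \in V$ \emph{good} if for all $g, h \in V$ with $f \astarrow_\Pi g$ and $f \astarrow_\Pi h$ one has $g \downarrow_\Pi h$. Then confluence is the assertion that every element of $V$ is good, so it suffices to show $f$ is good on the inductive hypothesis that every $f'$ with $f \rightarrow_\Pi f'$ is good.

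Concretely, let $f \astarrow_\Pi g$ and $f \astarrow_\Pi h$. If one of these reductions has length $0$, say $f = g$, then $g = f \astarrow_\Pi h$ witnesses $g \downarrow_\Pi h$, and symmetrically if $f = h$. Otherwise split off the first steps: $f \rightarrow_\Pi g_1 \astarrow_\Pi g$ and $f \rightarrow_\Pi h_1 \astarrow_\Pi h$. The single steps $f \rightarrow_\Pi g_1$ and $f \rightarrow_\Pi h_1$ form a local fork, so by local confluence there is $p$ with $g_1 \astarrow_\Pi p$ and $h_1 \astarrow_\Pi p$. Since $f \rightarrow_\Pi g_1$, the inductive hypothesis says $g_1$ is good; applied to $g_1 \astarrow_\Pi g$ and $g_1 \astarrow_\Pi p$ this yields $q$ with $g \astarrow_\Pi q$ and $p \astarrow_\Pi q$. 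Composing, $h_1 \astarrow_\Pi p \astarrow_\Pi q$, so $h_1 \astarrow_\Pi q$; since also $h_1 \astarrow_\Pi h$ and $f \rightarrow_\Pi h_1$, goodness of $h_1$ gives $r$ with $h \astarrow_\Pi r$ and $q \astarrow_\Pi r$. Finally $g \astarrow_\Pi q \astarrow_\Pi r$ gives $g \astarrow_\Pi r$ alongside $h \astarrow_\Pi r$, so $g \downarrow_\Pi h$. Hence $f$ is good, and the induction is complete.

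I do not expect a real obstacle; this is a textbook diagram chase. The only point that needs care is that local confluence by itself closes only the single-step fork at $f$, so the two remaining completion steps must be performed at the strictly smaller elements $g_1$ and $h_1$ — and it is exactly the termination hypothesis that makes $g_1$ and $h_1$ available as smaller instances of the induction, without which local confluence does not imply confluence.
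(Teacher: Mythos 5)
Your proof is correct: the paper does not prove this lemma at all but simply cites \cite[Lemma~2.7.2]{BN}, and your Noetherian-induction argument (with the two completion steps performed at the one-step reducts $g_1$ and $h_1$, which is exactly where termination is needed) is precisely the standard proof given in that reference. Nothing needs to be added or changed.
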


\begin{prop} \vwpt For any $f, g \in V$, consider the following properties:
\begin{enumerate}
\item $f \to_\Pi g$. \mlabel{item:eq1}
\item $(f-g) \to_\Pi 0$. \mlabel{item:eq2}
\item $(f - g) \astarrow_\Pi 0$. $($equivalently,  $(f - g) \downarrow_\Pi 0$$)$. \mlabel{item:eq3}
\item $f \downarrow_\Pi g$.\mlabel{item:eq4}
\end{enumerate}
Then (\mref{item:eq1}) $\implies$ (\mref{item:eq2}) $\implies$ (\mref{item:eq3}) $\implies$ (\mref{item:eq4}).
\mlabel{pp:red}
\end{prop}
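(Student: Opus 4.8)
The plan is to establish the three implications one at a time by unwinding Definition~\ref{def:ARSbasics}; the simplicity of $\Pi$ enters only in the first of them. For (\ref{item:eq1})~$\implies$~(\ref{item:eq2}): a one-step rewriting $f \to_\Pi g$ is witnessed by a rule $t \to_\Pi v$ with $t \in \Supp(f)$, with coefficient $c_t \neq 0$ of $t$ in $f$, and with $g = c_t v + R_t(f)$; subtracting gives $f - g = c_t t - c_t v = c_t(t-v)$. Because $\Pi$ is simple, $t \dps v$, so $t \notin \Supp(v)$; hence $t \in \Supp(f-g)$ with coefficient exactly $c_t$ and $R_t(f-g) = -c_t v$, and applying the \emph{same} rule $t \to_\Pi v$ to $f-g$ yields $c_t v + R_t(f-g) = 0$. (Simplicity is what makes this work: without it the $t$-component of $c_t v$ could cancel $c_t t$ and remove $t$ from $\Supp(f-g)$.)

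For (\ref{item:eq2})~$\implies$~(\ref{item:eq3}): this is immediate, since $\astarrow_\Pi$ is the reflexive transitive closure of $\to_\Pi$. For the parenthetical equivalence, note that $0$ is irreducible because $\Supp(0)=\emptyset$, so every reduction issuing from $0$ is trivial; thus $(f-g)\downarrow_\Pi 0$ forces the common reduct to be $0$, i.e. $(f-g)\astarrow_\Pi 0$, while the reverse implication holds trivially with witness $p=0$.

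The substantive step is (\ref{item:eq3})~$\implies$~(\ref{item:eq4}), which I would prove by induction on the length $m$ of a reduction $f-g = p_0 \to_\Pi p_1 \to_\Pi \cdots \to_\Pi p_m = 0$. When $m=0$ we have $f=g$ and there is nothing to prove. When $m\geq 1$, the first step applies a rule $t\to_\Pi v$ to $f-g$, so $t\in\Supp(f-g)$, its coefficient $c_t = a_t - b_t$ is nonzero (writing $a_t,b_t$ for the coefficients of $t$ in $f,g$), and $p_1 = (f-g) - c_t(t-v)$. Since $c_t\neq 0$, at least one of $a_t,b_t$ is nonzero, so the rule $t\to_\Pi v$ can be applied to $f$ (when $a_t\neq 0$), to $g$ (when $b_t\neq 0$), or to both; in each of these cases, applying it to $f$ and/or $g$ produces $f',g'$ with $f\astarrow_\Pi f'$ and $g\astarrow_\Pi g'$ (each in zero or one step) and $f'-g' = (f-g) - (a_t-b_t)(t-v) = p_1$. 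As $f'-g' = p_1 \astarrow_\Pi 0$ in $m-1$ steps, the induction hypothesis gives $f'\downarrow_\Pi g'$; concatenating $f\astarrow_\Pi f'$ and $g\astarrow_\Pi g'$ with a common reduction of $f'$ and $g'$ then gives $f\downarrow_\Pi g$.

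The only real obstacle I anticipate is the bookkeeping in this last step: verifying, in each branch of the case split on $(a_t,b_t)$, that $f'-g'$ is exactly $p_1$. This rests on the single observation that applying a rule $t\to_\Pi v$ to an element whose $t$-coefficient is $c$ subtracts $c(t-v)$ from it, so that in every branch $f'-g' = (f-g)-c_t(t-v) = p_1$ is a one-line computation; everything else is routine unwinding of the definitions.
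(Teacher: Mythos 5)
Your proposal is correct and follows essentially the same route as the paper: the same one-line computation (using simplicity, i.e.\ $t \dps v$) for (\ref{item:eq1})~$\implies$~(\ref{item:eq2}), and for (\ref{item:eq3})~$\implies$~(\ref{item:eq4}) the same induction on the length of the reduction of $f-g$ to $0$, splitting off the coefficients $a_t,b_t$ of $t$ in $f$ and $g$ and applying the induction hypothesis to $f-a_t(t-v)$ and $g-b_t(t-v)$. The paper merely packages your case split on $(a_t,b_t)$ by allowing zero coefficients in $f=at\dps R_t(f)$, $g=bt\dps R_t(g)$, so there is no substantive difference.
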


\begin{proof} (\mref{item:eq1}) $\implies$ (\mref{item:eq2}): Let $f = ct \dps R_t(f)$ and $g = cv +R_t(f)$, where $(t,v) \in \Pi$. Then since $t \dps v$, we have $f - g = ct \dps (-c v) \to_\Pi cv - cv = 0.$
\smallskip

\noindent
(\mref{item:eq2}) $\implies$ (\mref{item:eq3}): and the equivalence in (\mref{item:eq3}) are obvious.
\smallskip

\noindent
(\mref{item:eq3}) $\implies$ (\mref{item:eq4}): By hypothesis, $(f -g) \mastarrow{n}_\Pi 0$ for some $n \geq 0$. We prove (\mref{item:eq3}) $\implies$ (\mref{item:eq4}) by induction on $n$. If $n = 0$, then $f=g$ and hence $f \downarrow_\Pi g$. Suppose $n \geq 1$. Then $(f-g) \to_\Pi h$ for some $h \in V$,  $h \ne f-g$, and $h \overset{n-1}{\longrightarrow}_\Pi 0$.  More specifically, there exist $(t,v) \in \Pi$ and $c_t \in \bfk$, $c_t \ne 0$, such that
$f-g = c_t t \dps R_t(f-g)$ and $h = c_t v + R_t(f-g)$.  Now we may write $f = a t \dps R_t(f)$ and $g = b t \dps R_t(g)$, where at most one of $a, b$ may be zero. Then $f - g = (a - b)t \dps (R_t(f)- R_t(g))$ by Lemma \mref{lem:add}(\mref{item:add2}), and hence $c_t = a - b$ and $R_t(f-g) = R_t(f) - R_t(g)$. Then rewriting
\begin{eqnarray*}
h& =&c_t t + R_t(f-g) - c_t (t - v)\\
&=&(f-g) - (a - b) (t - v)\\
&=& (f - a(t - v)) - (g - b(t - v)),
\end{eqnarray*}
and noting that $h \overset{n-1}{\longrightarrow}_\Pi 0$, we have by induction that $(f - a(t - v)) \downarrow_\Pi (g - b(t - v))$. Now $$f = a t \dps R_t(f) \astarrow_\Pi a v + R_t(f) = f - a(t - v)$$ and similarly $g \astarrow_\Pi g - b(t - v)$, so $f \downarrow_\Pi g$.
\end{proof}

There are examples to show that the implications in Proposition \mref{pp:red} are all one-way, thus providing a strict hierarchy of binary relations on $V$.

We next give a more general result then the implication (a) $\implies$ (d) in Proposition~\mref{pp:red}.
\begin{lemma} \vwpt  Let $f, g, h \in V$. If $f \to_\Pi g$, then
 $(f+h) \downarrow_\Pi (g+h)$. \mlabel{lem:join1}
\end{lemma}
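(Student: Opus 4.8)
The statement asks: if $f \to_\Pi g$ then $(f+h) \downarrow_\Pi (g+h)$ for \emph{any} $h \in V$ — note this is more general than Proposition~\ref{pp:red}(a)$\implies$(d), which is the case $h = 0$. The natural strategy is to reduce to the case already handled. Write the one-step rewriting $f \to_\Pi g$ explicitly: there is $(t,v) \in \Pi$ and a nonzero $c_t \in \bfk$ with $f = c_t t \dps R_t(f)$ and $g = c_t v + R_t(f)$. The subtlety that makes $h \neq 0$ nontrivial is that $t$ may or may not lie in $\Supp(h)$, so $f + h$ need not have $t$ in its support with the ``expected'' coefficient; indeed if the coefficient of $t$ in $h$ is $-c_t$, then $t \notin \Supp(f+h)$ at all, and the rule $t \to_\Pi v$ cannot even be applied to $f+h$ directly.

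First I would dispatch the degenerate case: if $c_t + (\text{coeff. of } t \text{ in } h) = 0$, one still has $f + h = g + h$ as elements of $V$ — this is a direct computation using $f - g = c_t(t-v)$ and $g = c_t v + R_t(f)$, so that $f + h$ and $g + h$ differ by $c_t(t-v)$, and... actually here a cleaner route is available for \emph{all} $h$ at once. Observe that the one-step rewriting of $f+h$ we want to perform is governed by the coefficient of $t$ in $f+h$, call it $a := c_t + h_t$ where $h_t$ is the coefficient of $t$ in $h$; similarly the coefficient of $t$ in $g+h$ is $b := h_t$ (since $g = c_t v + R_t(f)$ has no $t$ unless $t = v$, but $t \dps v$ forbids that). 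So $a - b = c_t$. Now mimic the endgame of the proof of Proposition~\ref{pp:red}: from $f+h = a t \dps R_t(f+h)$ we get $(f+h) \astarrow_\Pi a v + R_t(f+h) = (f+h) - a(t-v)$, and from $g+h = b t \dps R_t(g+h)$ we get $(g+h) \astarrow_\Pi (g+h) - b(t-v)$. It remains to check that these two targets coincide: $(f+h) - a(t-v) - \big[(g+h) - b(t-v)\big] = (f - g) - (a-b)(t-v) = c_t(t-v) - c_t(t-v) = 0$, where I used $f - g = f - (c_t v + R_t(f)) = c_t t - c_t v = c_t(t-v)$. Hence both $f+h$ and $g+h$ reduce (in at most one step each) to the common element $(f+h) - a(t-v)$, which is precisely $f+h \downarrow_\Pi g+h$.

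So the whole proof is really: unfold the definition of $f \to_\Pi g$ to extract $(t,v)$ and $c_t$; set $a,b$ to be the coefficients of $t$ in $f+h$ and $g+h$; apply $t \to_\Pi v$ to each of $f+h$ and $g+h$ (each reduction being zero steps or one step depending on whether the coefficient vanishes) landing at $f+h-a(t-v)$ and $g+h-b(t-v)$ respectively; and verify $a - b = c_t$ and $f - g = c_t(t-v)$ so that the two targets agree. Lemma~\ref{lem:add} is the tool for manipulating the $\dps$-decompositions (splitting off the coefficient of $t$ and recognizing $R_t(f+h) = R_t(f) + R_t(h)$ when sign conditions allow, though in fact one can avoid even this by just naming $a$ and $b$ directly from the coefficient of $t$).

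**Main obstacle.** The only genuine friction is bookkeeping around the case where $t$ falls out of the support: one must be comfortable with ``reduces to in $0$ or $1$ steps'' (the $\mastarrow{0}$ convention from Definition~\ref{def:ARSbasics}(\ref{item:rednstep})) so that applying the rule $t \to_\Pi v$ to an element not containing $t$ is interpreted as the trivial reduction, and then the single common target $(f+h) - a(t-v)$ still makes sense. Once that convention is invoked, there is no real difficulty; the algebraic identity $f - g = c_t(t-v)$ does all the work, exactly as in the last paragraph of the proof of Proposition~\ref{pp:red}. I would not expect to need Newman's Lemma, confluence, or termination anywhere — this is a purely one-step, elementary statement.
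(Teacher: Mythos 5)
Your proposal is correct and follows essentially the same route as the paper's proof: apply the single rule $(t,v)$ to each of $f+h$ and $g+h$ (in zero or one step via $\astarrow_\Pi$, which absorbs the vanishing-coefficient case) and check that both land on the common element $(b+c)v + R_t(f)+R_t(h)$, which is exactly your $(f+h)-a(t-v)$. The only cosmetic difference is that the paper splits $h = bt \dps R_t(h)$ and invokes Lemma~\mref{lem:add}, while you name the coefficients $a,b$ directly and verify $f-g=c_t(t-v)$; the content is identical.
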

\begin{proof} By Definition
\mref{def:ARSbasics}(\mref{item:Trule}), $f\to_\Pi g$ means that there exist $(t,v) \in \Pi$ and $0\neq c \in\bfk$ such that $f=c t \dotplus R_t(f)$ and $g=c v + R_t(f)$.  Let $h = b t \dps R_t(h)$, where $b \in \bfk$ ($b$ may be zero). Since $\Pi$ is \simple, we have $t \dps v$. Since $t \dps R_t(f)$,  we have $t \dps g$ by Lemma~\mref{lem:add}. Applying Lemma~\mref{lem:add} again, we get
\begin{equation}
g+h=bt \dps (g+R_t(h))\astarrow_\Pi bv+(g+R_t(h))=(b+c)v+R_t(f)+R_t(h).
\mlabel{eq:add}
\end{equation}
On the other hand, we also have
\begin{equation}
f+h=(b+c)t \dps (R_t(f)+R_t(h))\astarrow_\Pi (b+c)v +R_t(f)+R_t(h).
\mlabel{eq:redeq1}
\end{equation}
By Eqs. (\mref{eq:add}) and (\mref{eq:redeq1}), $(f+h) \downarrow_\Pi (g+h)$.
\end{proof}

The following is a key result for applications in later sections.
\begin{theorem} \vwpt  Consider the following properties on $\Pi$:
\begin{enumerate}
\item $\Pi$ is confluent, that is, for any $f,g,h \in V$, $$(f \astarrow_\Pi g,\ f \astarrow_\Pi h) \implies g \downarrow_\Pi h.$$ \mlabel{item:confluence}
\item For all $f, g, h \in V$, $$f \downarrow_\Pi g,\  g \downarrow_\Pi h \implies f \downarrow_\Pi h.$$ \mlabel{item:transitivity}
\item For all $f, g, f', g' \in V$, $$f \downarrow_\Pi g,\  f' \downarrow_\Pi g' \implies (f + f') \downarrow_\Pi (g+g').$$ \mlabel{item:2-additivity}
\item For all $r \geq 1$ and $f_1, \dots, f_r, g_1, \dots, g_r \in V$, $$f_i \downarrow_\Pi g_i  \quad (1 \leq i \leq r) {\rm\ and\ } \sum_{i=1}^r g_i = 0 \implies \left(\sum_{i=1}^r f_i \right)  \astarrow_\Pi 0.$$ \mlabel{co:red0}\mlabel{item:add0transpose}
\end{enumerate}
Then (\mref{item:confluence}) $\implies$ (\mref{item:transitivity}) $\implies$ (\mref{item:2-additivity}) $\implies$ (\mref{item:add0transpose}).
\mlabel{thm:downarrow}
\end{theorem}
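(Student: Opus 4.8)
The plan is to establish the three implications in succession, relying throughout on two elementary facts. First, the relation $\downarrow_\Pi$ is symmetric, directly from its definition in Definition~\mref{def:ARSbasics}. Second, $0$ is in normal form: since $\Supp(0)=\emptyset$, no rewriting rule of $\Pi$ can be applied to $0$, so $0\astarrow_\Pi p$ forces $p=0$. The only previously proved input is Lemma~\mref{lem:join1}. For (\mref{item:confluence}) $\implies$ (\mref{item:transitivity}): given $f\downarrow_\Pi g$ and $g\downarrow_\Pi h$, choose $p$ with $f\astarrow_\Pi p$, $g\astarrow_\Pi p$ and $q$ with $g\astarrow_\Pi q$, $h\astarrow_\Pi q$; since $g\astarrow_\Pi p$ and $g\astarrow_\Pi q$, confluence supplies $e$ with $p\astarrow_\Pi e$ and $q\astarrow_\Pi e$, whence $f\astarrow_\Pi e$ and $h\astarrow_\Pi e$, so $f\downarrow_\Pi h$. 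This is the standard deduction and should be immediate.

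For (\mref{item:transitivity}) $\implies$ (\mref{item:2-additivity}) the key preliminary is to promote Lemma~\mref{lem:join1} from a single rewriting step to an arbitrary reduction: I would first show that $f\astarrow_\Pi g$ implies $(f+h)\downarrow_\Pi(g+h)$ for every $h\in V$. Indeed, writing a reduction $f=f_0\to_\Pi f_1\to_\Pi\cdots\to_\Pi f_n=g$, Lemma~\mref{lem:join1} gives $(f_{i-1}+h)\downarrow_\Pi(f_i+h)$ for $1\leq i\leq n$, and $n-1$ applications of (\mref{item:transitivity}) chain these into $(f+h)\downarrow_\Pi(g+h)$. Granting this, suppose $f\downarrow_\Pi g$ with $f,g\astarrow_\Pi p$ and $f'\downarrow_\Pi g'$ with $f',g'\astarrow_\Pi p'$. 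Two applications of the promoted statement give $(f+f')\downarrow_\Pi(p+f')$ and $(p+f')\downarrow_\Pi(p+p')$, hence $(f+f')\downarrow_\Pi(p+p')$ by (\mref{item:transitivity}); likewise $(g+g')\downarrow_\Pi(p+p')$. Applying symmetry to the latter and using (\mref{item:transitivity}) once more yields $(f+f')\downarrow_\Pi(g+g')$.

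For (\mref{item:2-additivity}) $\implies$ (\mref{item:add0transpose}): from the hypotheses $f_i\downarrow_\Pi g_i$, an induction on $r$ using (\mref{item:2-additivity}) gives $\sum_{i=1}^r f_i\downarrow_\Pi\sum_{i=1}^r g_i$. Since $\sum_{i=1}^r g_i=0$ and $0$ is irreducible, a common reduct of $\sum_{i=1}^r f_i$ and $0$ must equal $0$, so $\sum_{i=1}^r f_i\astarrow_\Pi 0$, as required.

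I do not anticipate a genuine obstacle: all three implications are formal manipulations of the relations $\astarrow_\Pi$ and $\downarrow_\Pi$. The one point deserving care, and the natural place for the argument to stall if handled carelessly, is that Lemma~\mref{lem:join1} is only a one-step statement; its extension to arbitrary reductions is exactly what the additivity step consumes, and that extension is precisely where transitivity (\mref{item:transitivity}) is needed.
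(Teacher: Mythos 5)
Your proposal is correct, and it rests on the same ingredients as the paper's proof: the step (\mref{item:confluence}) $\implies$ (\mref{item:transitivity}) is the identical standard fork argument, the key input for (\mref{item:transitivity}) $\implies$ (\mref{item:2-additivity}) is Lemma~\mref{lem:join1}, and the final step (\mref{item:2-additivity}) $\implies$ (\mref{item:add0transpose}) is the same induction on $r$ followed by the observation that $0$ is irreducible, so a common reduct with $0$ must be $0$. The one place you genuinely diverge is the organization of the middle implication: the paper first proves the special case that $f \downarrow_\Pi g$ implies $(f+h')\downarrow_\Pi (g+h')$, by induction on the joinable distance $d^\vee_\Pi(f,g)$ of Eq.~(\mref{eq:dist}) (peeling off one step of a minimal pair of reductions and invoking Lemma~\mref{lem:join1} plus (\mref{item:transitivity})), and then applies that special case twice, with $h'=f'$ and $h'=g$. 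You instead promote Lemma~\mref{lem:join1} only along a single reduction chain, so your induction is on the length of one reduction $f\astarrow_\Pi g$ rather than on a two-sided joinable distance, and you then sandwich $(f+f')$ and $(g+g')$ against $(p+p')$ through the common reducts $p$ and $p'$, finishing with symmetry of $\downarrow_\Pi$ and (\mref{item:transitivity}). Your variant is slightly more economical in that it avoids the minimality bookkeeping attached to $d^\vee_\Pi$, while the paper's distance induction delivers the ``add a common summand to a joinable pair'' statement directly; both consume exactly Lemma~\mref{lem:join1} and the hypothesis (\mref{item:transitivity}), so the difference is one of organization rather than substance, and your argument is complete as sketched (the only points needing a word, which you supply, are the trivial case of a length-zero reduction and the symmetry of $\downarrow_\Pi$).
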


Much more can be said about a simple term-rewriting system. For example the above properties are equivalent. \footnote{Further discussions are left out for limit of space, but can be included if the referee or editor prefers.}
\begin{proof} (\mref{item:confluence}) $\implies$  (\mref{item:transitivity}):
Since $f \downarrow_\Pi g$, there exists $f'\in V$ such that $f\astarrow_\Pi f'$ and $ g\astarrow_\Pi f'$. Similarly, since $g \downarrow_\Pi h$, there exists $h'\in V$ such that $g\astarrow_\Pi h'$ and $h\astarrow_\Pi h'$. Then $(g\astarrow_\Pi f', g\astarrow_\Pi h')$ is a fork and since $\Pi$ is confluent, $f' \downarrow_\Pi h'$. Therefore $f \downarrow_\Pi h$.

\smallskip
\noindent(\mref{item:transitivity}) $\implies$  (\mref{item:2-additivity}): We first consider the special case when $f'=g'$ both of which are denoted by $h'$.  Let $d = d^\vee_\Pi(f,g)$, and let $m, n \in \NN$ be such that $m+n = d$ and by minimality there exist distinct $f_0, f_1, \dots, f_m \in V$ and distinct $g_0, g_1, \dots, g_n \in V$ with $$f = f_0 \to_\Pi f_1\to_\Pi \cdots \to_\Pi f_m, \quad g = g_0 \to_\Pi g_1\to_\Pi \cdots \to_\Pi g_n,$$ and $f_m=g_n$. If $d=0$,  then $f=g$ and clearly $(f+h') \downarrow_\Pi (g+h')$. If $d = 1$, then either $f\to_\Pi g$ or $g \to_\Pi f$ and these cases follow from Lemma \mref{lem:join1}. Suppose now $d=s+1$ where $s\geq 1$, and suppose by induction that for all $\tilde{f}, \tilde{g} \in V$,
$$\tilde{f} \downarrow_\Pi \tilde{g}, \ d^\vee_\Pi(\tilde{f},\tilde{g}) \leq s \implies (\tilde{f}+h')\downarrow_\Pi (\tilde{g}+h').$$   Since $d \geq 2$, either $m\geq 1$ or $n\geq 1$ (or both). Without loss of generality, we assume $m \geq 1$. Then $f_1 \downarrow_\Pi g$ and $d^\vee_\Pi(f_1,g)\leq s$. By the induction hypothesis, $(f_1+h') \downarrow_\Pi (g+h')$. It follows by Lemma \mref{lem:join1} that $(f+h') \downarrow_\Pi (f_1+h')$, and by assumption (\mref{item:transitivity}) that $(f+h') \downarrow_\Pi (g+h')$. This completes the induction for the special case.

Applying the special case with $h' = f'$, we get $(f+f') \downarrow_\Pi (g+f')$, while applying the special case with $h' = g$, we get $(g+f') \downarrow_\Pi (g+g')$. Thus, by assumption (\mref{item:transitivity}), we have $(f+f') \downarrow_\Pi (g+g')$.

\smallskip\noindent(\mref{item:2-additivity}) $\implies$  (\mref{item:add0transpose}): An inductive argument shows that,
for all $r \geq 1$ and $f_1, \dots, f_r, g_1, \dots, g_r \in V$, $$f_i \downarrow_\Pi g_i  \quad (1 \leq i \leq r) \implies \left(\sum_{i=1}^r f_i \right)\  \big\downarrow_\Pi\  \left(\sum_{i=1}^r g_i \right).$$
Indeed the case $r=1$ is trivial and the case $r=2$ holds by assumption (\mref{item:2-additivity}). For $r > 2$, by induction, we may assume that  $(\sum_{i=1}^{r-1} f_i) \downarrow_\Pi (\sum_{i=1}^{r-1} g_i)$ and by the case $r=2$,
$(\sum_{i=1}^{r-1} f_i) + f_r \downarrow_\Pi (\sum_{i=1}^{r-1} g_i)+ g_r$.

Then (\mref{item:add0transpose}) follows since $\left (\sum_{i=1}^r f_i\right) \downarrow_\Pi 0$ implies $\left (\sum_{i=1}^r f_i\right ) \astarrow_\Pi 0.$
\end{proof}

We now introduce a finer concept of confluence. 
\begin{defn} A {\bf local term-fork} is a fork $(c t \to_\Pi c v_1, c t \to_\Pi c v_2)$ where $(t,v_1), (t, v_2) \in \Pi$ and $c \in \bfk$, $c \ne 0$. The rewriting system $\Pi$ is {\bf locally term-confluent} if for every local term-fork $(c t \to_\Pi c v_1, c t \to_\Pi c v_2)$, we have $c (v_1 - v_2) \astarrow_\Pi 0$.\footnote{By Proposition \mref{pp:red}, this is a stronger condition than $cv_1 \downarrow_\Pi cv_2$. On the other hand, this is like Buchberger's $S$-polynomials reducing to zero for Gr\"obner basis.}
\end{defn}

\begin{lemma} Let $V$ be a free $\bfk$-module with a $\bfk$-basis $W$ and let $\Pi$ be a \simple term-rewriting system on $V$. Suppose we have a well-order $\preceq$ on $W$ with the property that, for all $(t,v)\in \Pi$, we have $v\prec t$ in the sense that $w\prec t$ (that is, $w\preceq t$ but $w\neq t$) for all $w\in \Supp(v)$. If $\Pi$ is locally term-confluent, then it is locally confluent. \mlabel{lem:ltcon}
\end{lemma}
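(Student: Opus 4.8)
The plan is to reduce local confluence to a statement about differences, and that in turn to a small closure lemma on basis elements. For a local fork $(f\to_\Pi g,\ f\to_\Pi h)$ it suffices to prove $(g-h)\astarrow_\Pi 0$, since Proposition~\ref{pp:red} --- the implication (\ref{item:eq3}) $\implies$ (\ref{item:eq4}) applied to the pair $g,h$ --- then yields $g\downarrow_\Pi h$. So I would fix such a fork, say with $f\to_\Pi g$ performed by a rule $(t_1,v_1)\in\Pi$ and $f\to_\Pi h$ by $(t_2,v_2)\in\Pi$, let $c_1,c_2\in\bfk$ be the (nonzero) coefficients of $t_1,t_2$ in $f$, and split on whether $t_1=t_2$.

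If $t_1=t_2=:t$, write $f=ct\dps R_t(f)$; then $g=cv_1+R_t(f)$ and $h=cv_2+R_t(f)$, so $g-h=c(v_1-v_2)$, and since $(ct\to_\Pi cv_1,\ ct\to_\Pi cv_2)$ is a local term-fork, local term-confluence gives $c(v_1-v_2)\astarrow_\Pi 0$ directly. If $t_1\ne t_2$, Lemma~\ref{lem:add} lets me write $f=c_1t_1\dps c_2t_2\dps R$, hence $g=c_1v_1+c_2t_2+R$ and $h=c_1t_1+c_2v_2+R$, so that
\[
g-h=c_1(v_1-t_1)+c_2(t_2-v_2)=:p+q,
\]
where a one-line check (reducing $t_i$, whose coefficient in the $i$-th summand is nonzero because $t_i\notin\Supp(v_i)$ thanks to $\Supp(v_i)\prec t_i$) shows $p\to_\Pi 0$ and $q\to_\Pi 0$. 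Thus the lemma reduces to the sub-lemma: \emph{if $p,q\in V$ with $p\to_\Pi 0$ and $q\to_\Pi 0$, then $p+q\astarrow_\Pi 0$.} Note that the additivity statements of Theorem~\ref{thm:downarrow} are unavailable here, as they are deduced from confluence, which is exactly what is being built.

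To prove the sub-lemma, observe that $p\to_\Pi 0$ forces $p=c(t-v)$ for some $(t,v)\in\Pi$ and $0\ne c\in\bfk$, and similarly $q=c'(t'-v')$ with $0\ne c'\in\bfk$. If $t=t'$, then $t\notin\Supp(v)\cup\Supp(v')$, so the coefficient of $t$ in $p+q$ is $c+c'$; whether or not this vanishes, at most one reduction step at $t$ sends $p+q$ to a scalar multiple of $v-v'$, which reduces to $0$ by local term-confluence applied to the local term-fork built from the rules $(t,v)$ and $(t,v')$. The case $t\ne t'$ is where the order enters: since $\preceq$ is total I may assume $t\succ t'$, and then $\Supp(v)\prec t$ together with $\Supp(v')\prec t'\prec t$ force $t\notin\Supp(v)\cup\Supp(v')\cup\{t'\}$; hence $t$ still occurs in $p+q=ct-cv+c't'-c'v'$ with coefficient exactly $c\ne 0$, and the single reduction $t\to v$ collapses $p+q$ onto $q$, which reduces to $0$ by hypothesis. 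I expect this last case --- the interaction of two reductions at distinct basis elements, where one element may appear inside the replacement of the other --- to be the only genuine obstacle, and it is precisely the compatibility $v\prec t$ together with totality of $\preceq$ that resolves it. (Well-foundedness of $\preceq$ is not needed for this lemma; it is used later, via Newman's Lemma~\ref{lem:newman}, to upgrade local confluence to confluence.)
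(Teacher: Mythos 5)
Your proposal is correct and follows essentially the same route as the paper's proof: split on $t_1=t_2$ versus $t_1\neq t_2$, use local term-confluence directly in the first case, and in the second case write the difference as $c_1(v_1-t_1)+c_2(t_2-v_2)$ and use the order $\preceq$ to reduce at the larger of $t_1,t_2$ first and then at the other, concluding via Proposition~\mref{pp:red}. Packaging the second case as a sub-lemma about two one-step reductions to zero (and re-handling the equal-term case inside it) is only a minor reorganization, and your remark that the additivity statements of Theorem~\mref{thm:downarrow} cannot be invoked here is well taken.
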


\begin{proof} Let $(g \to_\Pi f, g \to_\Pi h)$ be a local fork in $V$. Then there exist $(t_1, v_1), (t_2, v_2) \in \Pi$ such that
$g \overset{(t_1, v_1)}{\longrightarrow}_\Pi f$ and $g \overset{(t_2, v_2)}{\longrightarrow}_\Pi  h$.

To prove $f \downarrow_\Pi h$, first suppose $t_1 \ne t_2$. Without loss of generality, we may suppose $t_1 \succ t_2$. Then we may write $g = c_1 t_1 \dps (c_2 t_2 \dps r) = c_2 t_2 \dps (c_1 t_1 \dps r)$ for some $r \in V$, $c_1, c_2 \in \bfk$ and $c_1 \ne 0$, $c_2 \ne 0$. Then $f = c_1 v_1 + (c_2 t_2 \dps r)$ and $h = c_2 v_2 + (c_1 t_1 \dps r)$. Hence $f - h = c_1(v_1 - t_1) + c_2(t_2 - v_2)$. Since $t_1 \succ w_1$ for any $w_1\in\Supp(v_1)$ and $t_1 \succ t_2 \succ w_2$ for any $w_2\in\Supp(v_2)$, $f - h \overset{(t_1, v_1)}{\longrightarrow}_\Pi c_2(t_2 - v_2) \overset{(t_2,v_2)}{\longrightarrow}_\Pi 0$. By Proposition \mref{pp:red}, we have $f \downarrow_\Pi h$.

Next, we suppose $t_1 = t_2$. Writing $t:= t_1=t_2$ and $g = c t \dps R_t(g)$ for some $c \in \bfk$ and $c\neq 0$, we have $f = c v_1 + R_t(g)$ and $h = c v_2 + R_t(g)$. By hypothesis, the local term-fork $(ct \to_\Pi cv_1, ct \to_\Pi cv_2)$   implies that $f - h = c v_1 - cv_2 \astarrow_\Pi 0$. By Proposition \mref{pp:red}, $f \downarrow_\Pi h$.
\end{proof}

\subsection{Rota-Baxter term-rewriting}
\mlabel{ss:RBTR}
We now apply the general results from the last subsection to the rewriting process from a Rota-Baxter type OPI.
Except for those of differential type, which have been considered in~\mcite{GSZ}, and the Reynolds operator, the class of Rota-Baxter type OPI will include the operated identities that interested Rota~\mcite{Ro2} and were listed in the introduction. Later in the paper, we will use rewriting systems and Gr\"obner-Shirshov bases to give an explicit construction of the free $\phi$-algebra for some Rota-Baxter type OPI $\phi$.

We apply the general setup in Section~\mref{sec:TRS} to a Rota-Baxter type OPI.

\begin{defn}
Let $\phi(x,y)\in \bfk\mapm{x,y}$ be an OPI of the form $\lc x\rc \lc y\rc - \lc B(x,y)\rc $, where $B(x,y) \in \bfk\mapm{x,y}$.
\begin{enumerate}
\item A {\bf Rota-Baxter term} or {\bf a $\phi$-term} is a bracketed monomial $m \in \mapm{Z}$ of the form $q|_{\lc u \rc \lc v \rc}$, where $u, v \in \mapm{Z}$ and $q \in \frakM^\star(Z)$. Such a triple $(q,u,v)$ is called a {\bf representation of $m$}. The set of all representations of $m \in \mapm{Z}$ will be denoted by $\Phi_m$.
If this is the case, we shall call $(q,u,v)$ a {\bf triple}. For $f \in \bfk\mapm{Z}$, if $w = q|_{\lc u \rc \lc v \rc} \in \Supp(f)$ and the coefficient of $w$ in $f$ is $c \in \bfk$ with $c\neq 0$, then the $w$-complement $R_w(f):=f - cw$ of $f$ will also be denoted by $R_{q,u,v}(f)$ and we call it the {\bf $(q,u,v)$-complement of $f$}.

\item The {\bf Rota-Baxter $\phi$-rewriting system (RB$\phi$RS)} is the set $\Pi_\phi(Z)$ of rewriting rules in the sense of Definition~\mref{def:ARSbasics}, when we take $W:=\mapm{Z}, V=\bfk\mapm{Z}$ and
\begin{equation}
\Pi_\phi:=\Pi_\phi(Z):=\left\{\left .\, (q|_{\lc u\rc \lc v\rc}, q|_{\lc B(u,v)\rc})\,\right |\, q \in \frakM^\star(Z), u, v \in \frakM(Z) \right \} \subseteq \mapm{Z}\times \bfk\mapm{Z}.
\mlabel{eq:T0}
\end{equation}
\mlabel{def:redrules}

\item
We say $f\rightarrow_{\Pi_\phi} g$ (in words, {\bf $f$ reduces, or {\bf rewrites}, to $g$ with respect to $\Pi_{\phi}$ in one step}) if
there are elements $q\in \frakM^\star(Z)$, $c \in \bfk$ ($c \ne 0$), and $u, v \in \frakM(Z)$ such that
\begin{enumerate}
\item
$q|_{\lc u\rc \lc v\rc} \in \Supp(f)$, and $f = c q|_{\lc u\rc \lc v\rc} \dps R_{q,u,v}(f)$; and
\item
$g=c q|_{\lc B(u,v)\rc} + R_{q,u,v}(f)$.
\end{enumerate}
In other words, $f\rightarrow_{\Pi_\phi} g$ if for some $u, v \in \frakM(Z)$, $g$ is obtained from $f$ by replacing {\it exactly once} a subword $\lc u\rc \lc v\rc$ in {\it one} monomial $t \in \Supp(f)$ by $\lc B(u,v)\rc$. When we want to emphasize the parameters involved in this reduction, we write $\quvarrow{f}{g}{\Pi_\phi}$. \mlabel{item:red1step}
\end{enumerate}
\mlabel{def:redsys}
\end{defn}

Then Eq.~(\mref{eq:T0}) can be expressed as
\begin{equation}
\Pi_\phi:= \Pi_\phi(Z) := \left \{\left.\, \quvarrow{q|_{\lc u\rc \lc v\rc}}{q|_{\lc B(u,v)\rc}}{\Pi} \,\right| q \in \frakM^\star(Z), u, v \in \frakM(Z)\right \}. \mlabel{eq:T}
\end{equation}
In the following, we shall also denote $\to_{\Pi_\phi}$ (resp. $\quvarrow{}{}{\Pi_\phi}$, $\astarrow_{\Pi_\phi}$) simply by $\to_\phi$ (resp. $\quvarrow{}{}{\phi}$, $\astarrow_\phi$). We also abbreviate Eq. (\mref{eq:T}) by
\begin{equation}
\Pi_\phi:= \Pi_\phi(Z) := \left \{\left .\,\lc u\rc \lc v\rc \rightarrow_\phi \lc B(u,v)\rc \,\right| u, v \in \frakM(Z) \right \}. \mlabel{eq:Sphi}
\end{equation}

\begin{defn} Let $W$ be a subset of $\mapm{Z}$ and let $V$ be the free $\bfk$-submodule of $\bfk\mapm{Z}$ with basis $W$. We say the rewriting system $\to_\phi$ on $\bfk\mapm{Z}$ {\bf restricts to} a rewriting system $\to_\Pi$ on $V$ with basis $W$ if for all $f \in V$, $u, v \in \mapm{Z}$ and $q \in \frakM^\star(Z)$ such that $q|_{\lc u \rc\lc v \rc}$ is in $\Supp(f)$, we have $\quvarrow{f}{g}{\phi}$  implies $g \in V$.
\mlabel{de:rwrest}
\end{defn}

\begin{remark} A sufficient condition that $\to_\phi$ restricts to $\to_\Pi$ is when $q|_{\lc B(u,v)\rc} \in V$ whenever $q|_{\lc u \rc\lc v \rc} \in W$. \mlabel{rem:VT} \end{remark}

\begin{lemma}
For all $u, v\in \frakM(Z)$, $E \in \bfk\mapm{Z}$, and $q\in \frakM^{\star}(Z)$, we have  $q|_{\lc u\rc \lc v\rc} \dps q|_{\lc E \rc}.$
\mlabel{lem:cyc}
\end{lemma}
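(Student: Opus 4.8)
The plan is to reduce the support‑disjointness claim to a simple combinatorial statement about bracketed words. First I would note that $q|_{\lc u\rc\lc v\rc}$ is a single bracketed word, so that $\Supp(q|_{\lc u\rc\lc v\rc}) = \{q|_{\lc u\rc\lc v\rc}\}$. Writing $E = \sum_i c_i w_i$ with $c_i \in \bfk$ and $w_i \in \frakM(Z)$, the linearity conventions for the notation $q|_{(-)}$ give $q|_{\lc E\rc} = \sum_i c_i\, q|_{\lc w_i\rc}$, hence $\Supp(q|_{\lc E\rc}) \subseteq \{\, q|_{\lc w\rc} \mid w \in \frakM(Z)\,\}$. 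So it is enough to prove that $q|_{\lc u\rc\lc v\rc} \neq q|_{\lc w\rc}$ for every $w \in \frakM(Z)$; the required disjointness of the two supports then follows at once.

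To prove this inequality I would induct on $\dep(q)$, using the canonical form of $q$ regarded as an element of $\frakM(Z^\star)$, say $q = q_0 \lc \mpu_1\rc q_1 \cdots \lc \mpu_r\rc q_r$ with $q_i \in M(Z^\star)$ and $\mpu_j \in \frakM(Z^\star)$ (Eq.\,(\mref{eq:decom})), recalling that $q$ has exactly one occurrence of $\star$ (Definition~\mref{def:starbw}). If $\star$ lies in one of the word‑parts $q_i$ (which always happens when $\dep(q) = 0$), then replacing $\star$ by $\lc u\rc\lc v\rc$ introduces two new top‑level bracketed letters whereas replacing it by $\lc w\rc$ introduces only one, so the $P$‑breadths of $q|_{\lc u\rc\lc v\rc}$ and of $q|_{\lc w\rc}$ are $r+2$ and $r+1$ respectively; since the $P$‑breadth of a bracketed word is a well‑defined invariant (Definition~\mref{def:breadths}), the two words are distinct. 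If instead $\star$ lies inside some $\mpu_j$, then $\mpu_j \in \frakM^\star(Z)$ with $\dep(\mpu_j) < \dep(q)$, and $q|_{\lc u\rc\lc v\rc}$ and $q|_{\lc w\rc}$ have the same canonical form except that their $j$‑th bracketed letter is $\lc \mpu_j|_{\lc u\rc\lc v\rc}\rc$ in the former and $\lc \mpu_j|_{\lc w\rc}\rc$ in the latter; by uniqueness of the canonical form, equality of the two words would force $\mpu_j|_{\lc u\rc\lc v\rc} = \mpu_j|_{\lc w\rc}$, which is excluded by the induction hypothesis applied to $\mpu_j$.

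I do not anticipate a real difficulty here: morally the statement just says that the substitution map $z \mapsto q|_z$ is injective on $\frakM(Z)$, combined with the trivial observation that $\lc u\rc\lc v\rc$ has breadth $2$ while $\lc w\rc$ has breadth $1$. The only delicate point is the bookkeeping with canonical forms when $\star$ is buried inside several nested pairs of brackets, which is exactly what the induction on $\dep(q)$ is designed to handle; if one preferred to avoid the induction, one could instead argue directly at the level of the underlying strings of symbols, using that every bracketed word is properly bracket‑balanced to conclude that $\lc u\rc\lc v\rc$ can never equal $\lc w\rc$ read together with any portion of the surrounding context of $\star$ in $q$.
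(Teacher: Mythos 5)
Your proof is correct and follows essentially the same route as the paper's: reduce to a monomial $E$, then induct on the depth of $q$, distinguishing whether $\star$ sits at the top level (where the two substitutions produce words of different breadth/$P$-breadth) or inside a bracketed factor (where uniqueness of the canonical factorization lets you descend to a shallower $\star$-word and invoke the induction hypothesis). If anything, your case split is a bit more careful than the paper's induction step, which tacitly assumes the $\star$ lies inside a bracket once $\dep(q)\geq 1$.
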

\begin{proof}
First note that left and right multiplication on $\frakM^{\star}(Z)$ by an element of $\frakM(Z)$ is injective, as is the map sending $q\in \frakM^{\star}(Z)$ to $\lc q\rc$.

By Lemma~\mref{lem:add}, we only need to prove the lemma for monomials $E$ for which we apply induction on the depth of $q$. If the depth of $q$ is 0, then $q=q_1\star q_2$ for $q_1,q_2\in M(Z)$. Suppose there is $c\in \bfk$ such that $q|_{\lc u\rc \lc v\rc}=cq|_{\lc E\rc}$. Then we obtain $\lc u\rc \lc v\rc =c\lc E\rc$. This is a contradiction since the two sides have different breadths. Thus we have $q|_{\lc u\rc \lc v\rc} \dps q|_{\lc E\rc}.$
Assume that the lemma has been proved for $q$ with depth less or equal to $n\geq 1$ and consider $q$ with depth $n+1$. Then we have $q=q_1\lc q'\rc q_2$ with $q_1, q_2\in \frakM(Z)$ and $q'\in \frakM^{\star}(Z)$ with depth $n$. Thus from $q|_{\lc u\rc \lc v\rc}=cq|_{\lc E\rc}$ with $c\in \bfk$, we obtain
$$ q_1 \lc q'|_{\lc u\rc \lc v\rc}\rc q_2 =c q_1 \lc q'|_{\lc E\rc}\rc q_2.$$
From this we obtain
$q'|_{\lc u\rc \lc v\rc} =c q'|_{\lc E\rc}.$
This contradicts the induction hypothesis. Thus we have $q|_{\lc u\rc \lc v\rc} \dps q|_{\lc E\rc}$, completing the induction.
\end{proof}

\begin{coro} The RB$\phi$RS $\Pi_\phi$ is a simple rewriting system in the sense of Definition~\mref{def:ARSbasics}.
\mlabel{cor:1nonrec}
\end{coro}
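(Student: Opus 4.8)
The plan is to read this off Lemma~\ref{lem:cyc} with essentially no extra work. First I would recall what "simple'' means: by Definition~\ref{def:ARSbasics}, the rewriting system $\Pi_\phi$ on $V = \bfk\mapm{Z}$ with basis $W = \mapm{Z}$ is simple precisely when $t \dps v$ (that is, $\Supp(t)\cap\Supp(v)=\emptyset$) for every pair $(t,v)\in\Pi_\phi$. Next I would unwind the description of $\Pi_\phi$ from Eq.~(\ref{eq:T0}): every element of $\Pi_\phi$ has the form $(t,v)$ with $t = q|_{\lc u\rc\lc v\rc}$ and $v = q|_{\lc B(u,v)\rc}$ for some $q\in\frakM^\star(Z)$ and $u,v\in\frakM(Z)$, where $B(u,v)$ denotes the evaluation $B_{(\bfk\mapm{Z},\lc\,\rc)}(u,v)\in\bfk\mapm{Z}$. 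In particular $t$ is a single bracketed monomial, so $\Supp(t)=\{q|_{\lc u\rc\lc v\rc}\}$, while $v$ is the bracketed polynomial obtained by applying $q|_{\lc\,\rc}$ (a $\bfk$-linear operation) to $B(u,v)$.

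The one thing worth checking at the outset is that $\Pi_\phi$ really is a binary relation $\Pi_\phi\subseteq\mapm{Z}\times\bfk\mapm{Z}$ in the sense of Definition~\ref{def:ARSbasics}; this is immediate since $q|_{\lc u\rc\lc v\rc}\in\mapm{Z}$ is a bracketed word and $q|_{\lc B(u,v)\rc}\in\bfk\mapm{Z}$. After that, I would simply apply Lemma~\ref{lem:cyc} with $E := B(u,v)\in\bfk\mapm{Z}$, which yields $q|_{\lc u\rc\lc v\rc}\dps q|_{\lc B(u,v)\rc}$, i.e.\ $t\dps v$. Since $(q,u,v)$ was arbitrary, this holds for every rule of $\Pi_\phi$, and hence $\Pi_\phi$ is simple.

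There is no real obstacle here: the entire combinatorial content — the breadth/depth bookkeeping showing that a bracketed word of the form $q|_{\lc u\rc\lc v\rc}$ (whose innermost relevant factor $\lc u\rc\lc v\rc$ has breadth $2$) can never occur in $q|_{\lc E\rc}$ (whose corresponding factor $\lc E\rc$ has breadth $1$) — has already been carried out in the proof of Lemma~\ref{lem:cyc} by induction on the depth of $q$. The corollary is therefore just the instance $E = B(u,v)$ of that lemma, combined with the observation that $B(u,v)$ is a legitimate element of $\bfk\mapm{Z}$ to which Lemma~\ref{lem:cyc} applies.
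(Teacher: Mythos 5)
Your proof is correct and matches the paper's intent exactly: the paper gives no separate argument for this corollary because it is precisely the instance $E=B(u,v)$ of Lemma~\ref{lem:cyc} applied to each rule $(q|_{\lc u\rc\lc v\rc},\,q|_{\lc B(u,v)\rc})\in\Pi_\phi$, which is what you do. (Only a minor stylistic point: you reuse the letter $v$ both for the monomial in $\frakM(Z)$ and for the right-hand side of a rule, which is worth disambiguating but is not a gap.)
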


 We recall here some basic notions of rewriting systems that we specialize to $\Pi_\phi(Z)$ for any set $Z$ (including the case $Z = X =\{x,y\}$).

\begin{defn}
We say a bracketed polynomial $f \in \bfk\mapm{Z}$ is {\bf $\phi$-irreducible} (that is, irreducible with respect to $\Pi_\phi(Z)$) or is {\bf in (Rota-Baxter) normal form} \paren{RBNF} if no monomial of $f$ has $\lc u\rc \lc v \rc$ as a subword for any two monomials $u,v \in \mapm{Z}$; otherwise, we say $f$ is {\bf $\phi$-reducible}. Equivalently, $f$ is $\phi$-reducible if there exists $g \in \bfk\mapm{Z}$, $g \ne f$, such that $f \rightarrow_\phi g$. A bracketed polynomial $g \in \bfk\mapm{Z}$ is said to be {\bf a normal $\phi$-form} for $f$ if $g$ is in RBNF and $f \astarrow_\phi g$.\mlabel{def:RBNF}
\end{defn}

In particular, if $f$ is a {\it monomial}, then it is in RBNF if and only if there do not exist $q\in \frakM^\star(Z)$ and $u, v \in \mapm{Z}$ such that $f=q|_{\lc u\rc \lc v\rc}$. Let $\rbw(Z)$ denote the set of {\it monomials} of $\mapm{Z}$ in RBNF.

For a set $X$, the monomials in $X$ in RBNF are called Rota-Baxter words (RBW) in $X$ in~\mcite{EG1}. They are so named since they form a canonical basis of the free Rota-Baxter algebra on $X$. We will see later that they also form a canonical basis for some other Rota-Baxter type algebras.
As can be seen by removing all appearances of the superfluous monoid unit 1 from a representation given by Eq.\,(\mref{eq:decom}), every monomial $\frakx \in \frakM(X)$ in RBNF that is not the monoid unit 1 has a unique decomposition of the form
\begin{equation}
\frakx = \frakx_1\cdots \frakx_{k}, \mlabel{eq:stde}
\end{equation}
where the $\frakx_i$ for $1\leq i\leq k$ alternate to belong to either $S(X)$ or $\lc \rbw(X)\rc$.

\begin{defn} An expression $B \in \bfk\mapm{X}$ is {\bf totally linear in $X$} if every variables $x \in X$ appears exactly once in every monomial of $B$, when counted with multiplicity in repeated multiplications. \mlabel{def:tlinear}
\end{defn}

\begin{exam} Let $X = \{x, y\}$. The expression $x\lc y\rc + \lc x \rc \lc y \rc + x y$ is totally linear in $X$, but the monomials $\lc x \rc$, $x^2 \lc y \rc$ and $x \lc y \rc^2$ are not. \mlabel{ex:tlinear}
\end{exam}

The following definition is extracted from key properties of Rota-Baxter operators.

\begin{defn}
{\rm An expression $\phi \in \bfk\mapm{x,y}$ (more correctly, the OPI $\phi = 0$) is a {\bf Rota-Baxter type} OPI if $\phi$ has the form $\lc x\rc\lc y\rc-\lc B(x,y)\rc$ for some $B(x,y)\in \bfk\mapm{x,y}$ and if the following four conditions are satisfied:
\begin{enumerate}
\setlength{\itemsep}{3pt}
\item
$B(x,y)$ is {\bf totally linear} in $x, y$;
\mlabel{it:rb0}
\item
$B(x,y)$ is in RBNF; \mlabel{it:rb1}
\item For every set $Z$,
the rewriting system $\Pi_\phi(Z)$ in Eq.~(\mref{eq:Sphi}) is terminating;
\mlabel{it:rb2}
\item
For every set $Z$ and for all $u, v, w\in\frakM(Z)$, the expression $B(B(u,v),w)-B(u,B(v,w))$ is $\phi$-reducible to zero. \mlabel{it:rb3}
\end{enumerate}
If $\phi:=\lc x\rc\lc y\rc-\lc B(x,y)\rc$ is of Rota-Baxter type, then we say the expression $B(x,y)$ and the defining operator $P=\lc\ \rc$ of a $\phi$-algebra $R$ are {\bf of Rota-Baxter type}, too. By a {\bf Rota-Baxter type algebra}, we mean some $\phi$-algebra $R$ where $\phi$ is some expression in $\bfk\mapm{x,y}$ of Rota-Baxter type. \mlabel{de:rbtype} }
\end{defn}

\begin{exam} \mcite{GSZ}
Let $B(x,y):=x\lc y\rc$. Then $\phi = 0$ is the OPI defining the average operator and it is of Rota-Baxter type. As will be shown in Theorem~\mref{thm:exam}, the identities defining a Rota-Baxter operator and that defining a Nijenhuis operator are OPIs of Rota-Baxter type. \mlabel{ex:rbt0} \end{exam}

\begin{exam}
The expression $B(x,y):=y\lc x\rc$ is not of Rota-Baxter type. This is because in $\bfk\mapm{u,v,w}$, the operated polynomial
\begin{eqnarray*}B(B(u,v),w) -B(u,B(v,w))  &=& w \lc B(u,v)\rc -
B(v,w)\lc
u\rc\\&=&w\lc v \lc u\rc\rc  - w\lc v\rc \lc u\rc\\
&\,\,\,\,\,\,\rightarrow_\phi  & w\lc v \lc u\rc\rc  - w\lc u \lc v\rc \rc
\end{eqnarray*}
is in RBNF but is non-zero, and there is no other sequence of reduction for $w\lc v \lc u\rc\rc - w\lc u \lc v\rc\rc$.
\mlabel{ex:rbt1}\end{exam}

\begin{remark}Condition (\mref{it:rb0}) in
Definition~\mref{de:rbtype} is imposed since we are considering linear operators. Conditions (\mref{it:rb1}) and (\mref{it:rb2}) are needed to avoid obvious infinite rewriting under $\Pi_\phi(Z)$ though their relationship is still quite mysterious. Condition (\mref{it:rb3}) is to ensure compatibility with the associative law for products of the form $\lc a \rc \lc b \rc \lc c \rc$ where $a, b, c \in R$ for  a $\phi$-algebra $R$.
\end{remark}

The next proposition shows that a $\phi$-algebra is also a $\psi$-algebra if $\psi \astarrow_\phi 0$. In the next two propositions, for clarity, we spell out the algebra and $\phi$-algebra structure explicitly when needed. For example, if $B(x,y) \in \bfk\mapm{x,y}$, then $B_{(R,\ast,P)}$ refers to the set map from $(R,\ast,P)^2 \rightarrow (R,\ast,P)$ (see Footnote \mref{fn:algebra}).

\begin{prop} Let $R = (R, \ast, P)$ be a $\phi$-algebra, where $\phi:=\lc x\rc\lc y\rc-\lc B(x,y)\rc$. Then for any set $Z$, any finite number of distinct symbols $z_1, \cdots, z_k \in \genset$, and any operated polynomial $\psi=\psi(z_1, \cdots, z_k)$ in $\bfk\mapm{Z}$ such that $\psi \astarrow_\phi 0$, the $\phi$-algebra $R$ is also a $\psi$-algebra. \mlabel{prop:psi}
\end{prop}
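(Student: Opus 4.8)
The plan is to prove that $\psi$ lies in the operated ideal $I_\phi(\{z_1,\dots,z_k\})$ and then deduce that every $\phi$-algebra kills it. Fix $r_1,\dots,r_k\in R$ and let $\free{f_r}\colon\bfk\mapm{z_1,\dots,z_k}\to R$ be the morphism of operated algebras determined by $z_i\mapsto r_i$, so that $\psi_{(R,P)}(r_1,\dots,r_k)=\free{f_r}(\psi)$ by Eq.~(\mref{eq:phibar}); it thus suffices to show $\free{f_r}(\psi)=0$. First I would observe that the reduction $\psi\astarrow_\phi 0$, though a priori taking place in $\bfk\mapm{Z}$, introduces no new variables: by Definition~\mref{def:redsys} a one-step rewriting replaces a subword $\lc u\rc\lc v\rc$ of a monomial by $\lc B(u,v)\rc$, where $u,v$ (and the context $q$) consist of letters occurring in that monomial and $B(u,v)$ consists of letters of $u$ and $v$ only. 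Hence we may fix a reduction $\psi=f_0\to_\phi f_1\to_\phi\cdots\to_\phi f_n=0$ with every $f_i\in\bfk\mapm{z_1,\dots,z_k}$; if $n=0$ then $\psi=0$ and there is nothing to prove.

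Next I would telescope this reduction. For each $i$ with $0\le i<n$, Definition~\mref{def:redsys} supplies $q_i\in\frakM^\star(\{z_1,\dots,z_k\})$, $u_i,v_i\in\frakM(\{z_1,\dots,z_k\})$ and $0\neq c_i\in\bfk$ with $f_i=c_i\,q_i|_{\lc u_i\rc\lc v_i\rc}\dps R_{q_i,u_i,v_i}(f_i)$ and $f_{i+1}=c_i\,q_i|_{\lc B(u_i,v_i)\rc}+R_{q_i,u_i,v_i}(f_i)$, so using $\bfk$-linearity of $q\mapsto q|_{(-)}$,
\[ f_i-f_{i+1}=c_i\bigl(q_i|_{\lc u_i\rc\lc v_i\rc}-q_i|_{\lc B(u_i,v_i)\rc}\bigr)=c_i\,q_i|_{s_i},\qquad s_i:=\lc u_i\rc\lc v_i\rc-\lc B(u_i,v_i)\rc . \]
Since $\phi=\lc x\rc\lc y\rc-\lc B(x,y)\rc$, we have $s_i=\phi_{(\bfk\mapm{z_1,\dots,z_k},\lc\,\rc)}(u_i,v_i)\in S_\phi(\{z_1,\dots,z_k\})$, so by the description~(\mref{eq:repgen}) of operated ideals each $q_i|_{s_i}\in I_\phi(\{z_1,\dots,z_k\})$, and therefore
\[ \psi=f_0-f_n=\sum_{i=0}^{n-1}(f_i-f_{i+1})=\sum_{i=0}^{n-1}c_i\,q_i|_{s_i}\in I_\phi(\{z_1,\dots,z_k\}). \]

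Finally I would invoke Proposition~\mref{pp:frpio}, by which $\bfk_\phi\mapm{z_1,\dots,z_k}=\bfk\mapm{z_1,\dots,z_k}/I_\phi(\{z_1,\dots,z_k\})$ is the free $\phi$-algebra on $\{z_1,\dots,z_k\}$. Because $R$ is a $\phi$-algebra, the assignment $z_i\mapsto r_i$ extends to a morphism $\overline{g}\colon\bfk_\phi\mapm{z_1,\dots,z_k}\to R$ of operated algebras; composing with the canonical projection $\pi\colon\bfk\mapm{z_1,\dots,z_k}\to\bfk_\phi\mapm{z_1,\dots,z_k}$ gives a morphism that agrees with $\free{f_r}$ on the generators $z_i$, hence equals $\free{f_r}$ by the universal property of $\bfk\mapm{z_1,\dots,z_k}$. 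Since $\psi\in I_\phi(\{z_1,\dots,z_k\})=\ker\pi$, we conclude $\free{f_r}(\psi)=\overline{g}(\pi(\psi))=\overline{g}(0)=0$, i.e.\ $\psi_{(R,P)}(r_1,\dots,r_k)=0$. As $r_1,\dots,r_k\in R$ were arbitrary, $R$ is a $\psi$-algebra.

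I do not expect a genuine obstacle here: all the substance has been packed into the earlier constructions of $I_\phi$, $S_\phi$ and the free $\phi$-algebra. The only points requiring care are bookkeeping ones — verifying that the reduction remains inside $\bfk\mapm{z_1,\dots,z_k}$ so that $\free{f_r}$ can be applied, and identifying the difference $q_i|_{\lc u_i\rc\lc v_i\rc}-q_i|_{\lc B(u_i,v_i)\rc}$ with $q_i|_{s_i}$ for $s_i\in S_\phi(\{z_1,\dots,z_k\})$. A slightly more hands-on variant that bypasses Proposition~\mref{pp:frpio} would instead show that a single rewriting $f\to_\phi g$ leaves the evaluation map on $R$ unchanged: from $f-g=c\,q|_{\phi(u,v)}$, the $\bfk$-linearity of $\free{f_r}$, and $\free{f_r}(\phi(u,v))=\phi_{(R,P)}(\free{f_r}(u),\free{f_r}(v))=0$ (functoriality of evaluation together with $R$ being a $\phi$-algebra), a short induction on the depth of $q$ — using $a\ast 0=0\ast a=0$ and $P(0)=0$ in $R$ — gives $\free{f_r}(q|_{\phi(u,v)})=0$, and one then chains these equalities along $\psi\astarrow_\phi 0$.
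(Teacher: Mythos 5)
Your main argument is correct, but it reaches the conclusion by a different route than the paper. You telescope the reduction chain $\psi=f_0\to_\phi\cdots\to_\phi f_n=0$ into $\psi=\sum_i c_i q_i|_{s_i}$ with $s_i\in S_\phi$, conclude $\psi\in I_\phi(\{z_1,\cdots,z_k\})$ via Eq.~(\mref{eq:repgen}), and then kill $\psi$ by factoring the evaluation morphism $\free{f_r}$ through the free $\phi$-algebra $\bfk_\phi\mapm{z_1,\cdots,z_k}$ of Proposition~\mref{pp:frpio}. The paper instead argues one rewriting step at a time: if $\psi\to_\phi\psi'$ then $\psi-\psi'=c\,q|_{\phi(u,v)}$, and since $\phi_{(R,P)}(a,b)=0$ for $a=u_R(r)$, $b=v_R(r)$, the evaluations of $\psi$ and $\psi'$ agree; chaining along $\psi\astarrow_\phi 0$ gives $\psi_R=0$. (The paper handles possible extra variables by enlarging $k$, whereas you observe that rewriting never introduces new letters, so the whole chain stays in $\bfk\mapm{z_1,\cdots,z_k}$; either bookkeeping works.) What each buys: your ideal-theoretic route avoids having to justify that substituting the zero element into an evaluated context gives zero (the small induction on the depth of $q$, using $P(0)=0$ and $a\ast 0=0$), because the quotient morphism absorbs that; the paper's route is more elementary in that it never invokes the construction of the free $\phi$-algebra, only the definition of a $\phi$-algebra and of one-step rewriting. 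The ``hands-on variant'' you sketch at the end is essentially the paper's proof verbatim, and your remark that it needs the depth induction on $q$ correctly identifies the step the paper leaves implicit in its notation $q_R(r_1,\dots,r_k)|_{(\lc a\rc\lc b\rc-\lc B(a,b)\rc)}$.
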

\begin{proof} It suffices to show that if $\psi \rightarrow_\phi  \psi'$, then $\psi_R(r_1, \cdots, r_k)= \psi'_R(r_1, \cdots, r_k)$ for all $r_1, \cdots, r_k \in R$. Let $\psi \rightarrow_\phi  \psi'$. Then there exist $q \in \frakM^{\star}(Z)$, $c \in \bfk$ ($c \ne 0$), and $u, v \in \frakM(Z)$ such that
\begin{enumerate}
\item $q|_{\lc u\rc \lc v\rc}$ is a monomial of $\psi$, which has $c$ as its coefficient.

\item $\psi'=\psi-cq|_{(\lc u\rc \lc v\rc- \lc B(u,v)\rc)}$.
\end{enumerate}
By increasing $k$ if necessary, we may assume $u=u(z_1, \cdots, z_k)$ and $ v=v(z_1, \cdots, z_k)$ are in $\mapm{z_1, \cdots, z_k}$ and $q \in \frakM^\star(z_1, \cdots, z_k)$. Then for any $r_1, \cdots, r_k \in R$, the elements $a = u_R(r_1, \cdots, r_k)$ and $b = v_R(r_1, \cdots, r_k)$ are in $R$. Since $R$ is a $\phi$-algebra, $\lc a\rc \lc b\rc- \lc B_R(a,b)\rc = \phi_R(a,b) =0$, and hence $$\psi'_R(r_1, \cdots, r_k) = \psi_R(r_1, \cdots, r_k) - c q_R(r_1, \dots, r_k)|_{(\lc a\rc \lc b\rc- \lc B(a,b)\rc)} = \psi_R(r_1, \cdots, r_k).$$
This completes the proof.
\end{proof}

For a Rota-Baxter algebra $R$, with multiplication $*$ and Rota-Baxter operator $P$, it is common to endow $R$ with another multiplication in terms of the defining operator identity. This double algebra structure plays important roles in the splitting of associativity in algebras such as the dendriform algebra and more generally successors of operads\mcite{BBGN,Lo2,Lo}, and in integrable systems in the Lie algebra context~\mcite{Bai2,BGN,STS}. We describe this double structure below for the more general Rota-Baxter type algebras (for the case of Rota-Baxter operator, see~\cite[\S~1.1.17]{Gub}).

\begin{prop}
Let $\phi \in \bfk\mapm{x,y}$ be of Rota-Baxter type and suppose $\phi = \lc x \rc \lc y \rc - \lc B(x,y)\rc$. Let $(R,\ast,Q)$ be a $\phi$-algebra. Define a second multiplication $\ast_\phi$ by
\begin{equation}
r_1\ast_\phi r_2:= B_{(R,\ast, Q)}(r_1,r_2), \quad \text{for all } r_1, r_2\in R. \notag
\end{equation}
Then
\begin{enumerate}
\item
The pair $(R,\ast_\phi)$ is a nonunitary $\bfk$-algebra. \mlabel{it:double1}
\item If $B(x,y)$ does not involve $\lc 1 \rc \in \bfk\mapm{x,y}$, then
the triple $(R,\ast_\phi,Q)$ is a nonunitary $\phi$-algebra. \mlabel{it:double2}
\end{enumerate}
\mlabel{pp:double}
\end{prop}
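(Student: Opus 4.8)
The plan is to prove both parts by translating statements about $\ast_\phi$ into statements about the expression $B(x,y)$ and the operated identity $\phi$, and then invoking the defining properties of a Rota-Baxter type OPI, particularly condition~(\ref{it:rb3}) of Definition~\ref{de:rbtype}. For part~(\ref{it:double1}), I must check that $\ast_\phi$ is associative, i.e.\ that $(r_1 \ast_\phi r_2) \ast_\phi r_3 = r_1 \ast_\phi (r_2 \ast_\phi r_3)$ for all $r_1, r_2, r_3 \in R$. Unwinding the definition, the left side is $B_{(R,\ast,Q)}(B_{(R,\ast,Q)}(r_1,r_2), r_3)$ and the right side is $B_{(R,\ast,Q)}(r_1, B_{(R,\ast,Q)}(r_2,r_3))$; here I use the evaluation formalism of Eq.~(\ref{eq:phibar}) and the fact that $B$ is totally linear, so that these nested evaluations make sense. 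Thus associativity of $\ast_\phi$ is exactly the vanishing on $R$ of the evaluation of the operated polynomial $\psi(u,v,w) := B(B(u,v),w) - B(u,B(v,w)) \in \bfk\mapm{u,v,w}$. By condition~(\ref{it:rb3}) of Definition~\ref{de:rbtype}, $\psi \astarrow_\phi 0$, and then Proposition~\ref{prop:psi} (applied with $Z = \{u,v,w\}$) tells us that the $\phi$-algebra $R$ is also a $\psi$-algebra, i.e.\ $\psi_R(r_1,r_2,r_3) = 0$ for all $r_i \in R$. This gives associativity. Bilinearity of $\ast_\phi$ over $\bfk$ is immediate from the linearity of the evaluation map $B_{(R,\ast,Q)}$ in each argument, which in turn follows from total linearity of $B$ together with the $\bfk$-linearity of $\free{f_r}$; I would state this briefly rather than belabor it. Note that $(R,\ast_\phi)$ is only claimed to be nonunitary, so there is nothing to check about an identity element.

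For part~(\ref{it:double2}), I must show that when $B(x,y)$ does not involve $\lc 1 \rc$, the triple $(R, \ast_\phi, Q)$ is a $\phi$-algebra, i.e.\ that $Q(a) \ast_\phi Q(b) = Q\big(B_{(R,\ast_\phi,Q)}(a,b)\big)$ for all $a,b \in R$, where now the inner $B$ must be evaluated using the \emph{new} multiplication $\ast_\phi$ rather than $\ast$. By definition of $\ast_\phi$, the left-hand side equals $B_{(R,\ast,Q)}(Q(a), Q(b))$, and here I can replace each product $\ast$ appearing in $B$ by $\ast$ (the original multiplication) since that is how $\ast_\phi$ was defined; I must be careful to distinguish the two multiplications threading through the evaluation. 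The key point is a comparison lemma: for any operated polynomial $C \in \bfk\mapm{x,y}$ in RBNF that does not involve $\lc 1 \rc$, and for any $a, b \in R$, one has
\begin{equation*}
C_{(R,\ast_\phi,Q)}(a,b) = C_{(R,\ast,Q)}\big(\,?\,,\,?\,\big)
\end{equation*}
— more precisely, I expect that evaluating $B$ with respect to $\ast_\phi$ and then applying $Q$ gives the same result as a suitable nested application of $B_{(R,\ast,Q)}$, because every maximal $\ast_\phi$-product of two bracketed subterms $\lc \alpha \rc \ast_\phi \lc \beta \rc$ equals $B_{(R,\ast,Q)}(\alpha,\beta)$, and since the OPI $\phi$ holds in $(R,\ast,Q)$ this equals $Q(\alpha)\ast Q(\beta)$. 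Chasing this through, $Q\big(B_{(R,\ast_\phi,Q)}(a,b)\big)$ should reduce, using the identity $\phi$ repeatedly (one use per occurrence of the operator bracket produced inside $B$), to $Q(a) \ast Q(b) = Q(a) \ast_\phi$-free expression, matching the left side.

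The main obstacle, and the step I would spend the most care on, is the bookkeeping in part~(\ref{it:double2}): keeping straight which multiplication is being used at each stage of the evaluation, and verifying that the hypothesis ``$B$ does not involve $\lc 1 \rc$'' is exactly what is needed to make the substitution $\lc\alpha\rc \ast_\phi \lc\beta\rc = Q(\alpha)\ast Q(\beta)$ legitimate at every bracketed sub-position (the point being that $\lc 1 \rc \ast_\phi \lc \beta \rc$ would force evaluating $B$ at an argument that is $1 \in M(Z)$, which lands outside the ``no $\lc 1\rc$'' regime and breaks the induction). I would organize this as a short induction on the structure of $B$ — or, more cleanly, on the number of bracket-subterms in $B(x,y)$ — reducing to the base case where $B$ is a single totally linear bracketed monomial in $x,y$, and then invoking the $\phi$-algebra identity once. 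The remaining parts (well-definedness, bilinearity) are routine and I would dispatch them in a sentence each.
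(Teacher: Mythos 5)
Your part~(\emph{a}) is correct and is essentially the paper's own argument: associativity of $\ast_\phi$ is precisely the statement that $(R,\ast,Q)$ is a $\psi$-algebra for $\psi:=B(B(u,v),w)-B(u,B(v,w))$, which follows from Definition~\mref{de:rbtype}(\mref{it:rb3}) together with Proposition~\mref{prop:psi}, and the distributive laws come from linearity; nothing to add there.

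Part~(\emph{b}), however, has a genuine gap. The ``comparison lemma'' that is supposed to carry the whole argument is never actually formulated (its right-hand side is left as question marks), and the explicit identities you do write down are false as stated: by definition $\lc\alpha\rc\ast_\phi\lc\beta\rc=B_{(R,\ast,Q)}\bigl(Q(\alpha),Q(\beta)\bigr)$, not $B_{(R,\ast,Q)}(\alpha,\beta)$, and the $\phi$-identity gives $Q(\alpha)\ast Q(\beta)=Q\bigl(B_{(R,\ast,Q)}(\alpha,\beta)\bigr)$, so your chain of equalities is off by applications of $Q$ at each step; likewise the asserted target ``$Q(a)\ast Q(b)$'' is not the left-hand side, which is $Q(a)\ast_\phi Q(b)=B_{(R,\ast,Q)}\bigl(Q(a),Q(b)\bigr)$. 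The missing idea that makes the bookkeeping tractable is a structural normal form for the monomials of $B$: total linearity, RBNF and the absence of $\lc 1\rc$ force each monomial $B_j$ of $B$ to have the shape $P^{k_j}\bigl(P^{m_j}(x)P^{n_j}(y)\bigr)$ or $P^{k_j}\bigl(P^{n_j}(y)P^{m_j}(x)\bigr)$ (Eq.~(\mref{eq:mRBNF}) in the paper's proof), so each $B_j$ contains exactly one multiplication, joining a $Q$-tower on the first argument to a $Q$-tower on the second. With this, exactly one use of $\phi$ in $(R,\ast,Q)$ per monomial --- not one per bracket occurrence, as you propose --- gives $Q^{m_j+1}(r_1)\ast Q^{n_j+1}(r_2)=Q\bigl(Q^{m_j}(r_1)\ast_\phi Q^{n_j}(r_2)\bigr)$, hence $(B_j)_{(R,\ast,Q)}\bigl(Q(r_1),Q(r_2)\bigr)=Q\bigl((B_j)_{(R,\ast_\phi,Q)}(r_1,r_2)\bigr)$; summing over the monomials and using linearity of $Q$ yields $\phi_{(R,\ast_\phi,Q)}(r_1,r_2)=0$. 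Without this classification your proposed induction ``on the number of bracket-subterms of $B$'' has no precise inductive statement to prove, and your account of where the hypothesis ``$B$ does not involve $\lc 1\rc$'' enters remains heuristic: its role is exactly to guarantee the above normal form (and to make the evaluation of $B$ in the nonunitary algebra $(R,\ast_\phi)$ meaningful at all), not to avoid ``evaluating $B$ at $1$'' in an induction step.
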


\begin{proof} For clarity, we now use $P$ to denote the operator $\lc \, \rc$ for $\bfk\mapm{x,y}$, so for example $P(1) = \lc 1 \rc$, $P^i$ is the $i$-fold iteration of $P$, and $P^0$ is the identity operator. We observe that since $B(x,y)\in \bfk\mapm{x,y}$ is totally linear in $x, y$ and is in RBNF, we can write
\begin{equation}
B(x,y)=\sum_{j\in J} a_j B_j(x,y),
\mlabel{eq:decomp}
\end{equation}
where $J$ is a finite set, for $j \in J$, $B_j(x,y)$ are distinct, totally-linear monomials in RBNF and do not involving $P(1)$ in $\frakM(x,y)$, and $a_j \in \bfk$ and $a_j \ne 0$.  Hence $B_j(x,y)$ has one of two forms

\begin{equation}{\rm either\ } B_j(x,y) = P^{k_j}(P^{m_j}(x)P^{n_j}(y)) \quad{\rm\ or\ }\quad B_j(x,y) = P^{k_j}(P^{n_j}(y)P^{m_j}(x)) \mlabel{eq:mRBNF}\end{equation}
with integers $k_j, m_j, n_j\geq 0$ and  $m_j n_j=0$.\footnote{The careful reader will note that in the proof of (\mref{it:double2}), we never make use of the property that $m_j n_j = 0$. Thus the operated polynomial identity $\phi_{(R, \ast_\phi, Q)}(r_1, r_2) = 0$ holds under a much weaker assumption, requiring only that $B(x,y)$ be totally linear but not necessarily in RBNF. However, considering Examples \mref{ex:rbt0} and \mref{ex:rbt1}, we want to emphasize the importance that $B(x,y)$ be of Rota-Baxter type to begin with for (\mref{it:double1}) to hold, that is, for $\ast_\phi$ to be associative.\mlabel{fn:mn}}

(\mref{it:double1}) Applying Lemma \mref{pp:freetm} with $X$ set to $Z := \{u,v,w\}$, let $(R', \ast', P')$ be the free operated algebra $\bfk\mapm{Z} = \bfk\mapm{u,v,w}$ on the set $Z$. The operated polynomial
\begin{equation}\psi:= B_{(R', \ast', P')}(B_{(R', \ast', P')}(u,v),w) - B_{(R', \ast', P')}(u, B_{(R', \ast', P')}(v,w))
\mlabel{eq:bj}\end{equation}
of $R'$ is $\phi$-reducible to zero. Since $(R,*,Q)$ is a $\phi$-algebra, the associativity of $\ast_\phi$ in $(R, \ast_\phi)$ holds if and only if $(R, \ast, Q)$ is a $\psi$-algebra, which is the case by Proposition \mref{prop:psi}. Similarly, consider the operated polynomials
\begin{eqnarray}
\psi_1 &:=& B_{(R', \ast', P')}(u,v+w)-B_{(R', \ast', P')}(u,v) - B_{(R', \ast', P')}(u,w), \mlabel{eq:distr1}\\
\psi_2 &:=& B_{(R', \ast', P')}(v+w,u)-B_{(R', \ast', P')}(v,u) - B_{(R', \ast', P')}(w,u). \mlabel{eq:distr2}
\end{eqnarray}
By Eq.\,(\mref{eq:bj}), the linearity of $P'$, and the distributive laws of $\ast'$, we get $\psi_1 = \psi_2 = 0$ and hence $(R, \ast_\phi)$ satisfies
the left and right distributive laws. Thus $(R,\ast_\phi)$ is a nonunitary $\bfk$-algebra.

\smallskip
\noindent (\mref{it:double2}) To prove that $(R, \ast_\phi, Q)$ is a $\phi$-algebra, we must show that $\phi_{(R, \ast_\phi, Q)}(r_1, r_2) = 0$ for all $r_1, r_2 \in R$. We partition the index set $J$ from Eq.\,(\mref{eq:decomp}) accordingly into two disjoint sets $J_1$, $J_2$, where $B_j \in J_1$ has the first form from Eq.\,(\mref{eq:mRBNF}), and $B_j \in J_2$ has the second form.

For any $r_1, r_2 \in R$, we have
\begin{eqnarray*}
Q(r_1)\ast_\phi Q(r_2) &=& B_{(R,\ast,Q)}(Q(r_1),Q(r_2))\\
&=& \sum_{j\in J_1} a_j Q^{k_j}\Bigl(Q^{m_j+1}(r_1)\ast Q^{n_j+1}(r_2)\Bigr)
+ \sum_{j \in J_2} a_j Q^{k_j}\Bigl(Q^{n_j+1}(r_2)\ast Q^{m_j+1}(r_1)\Bigr)\\
&=&\sum_{j\in J_1} a_j Q^{k_j}\Bigl(Q\bigl( Q^{m_j}(r_1)\bigr)\ast Q\bigl( Q^{n_j}(r_2)\bigr)\Bigr) +
\sum_{j\in J_2} a_j Q^{k_j}\Bigl(Q\bigl(Q^{n_j}(r_2)\bigr)\ast Q\bigl( Q^{m_j}(r_1)\bigr)\Bigr)\\
&=&\sum_{j\in J_1} a_j Q^{k_j}\Bigl(Q\bigl( B_{(R,\ast,Q)}(Q^{m_j}(r_1), Q^{n_j}(r_2))\bigr)\Bigr) +
\sum_{j\in J_2} a_j Q^{k_j}\Bigl(Q\bigl( B_{(R,\ast,Q)}(Q^{n_j}(r_2), Q^{m_j}(r_1))\bigr)\Bigr)\\
&=&\sum_{j\in J_1} a_j Q^{k_j+1}\Bigl(Q^{m_j}(r_1)\ast_\phi Q^{n_j}(r_2)\Bigr) +
\sum_{j\in J_2} a_j Q^{k_j+1}\Bigl(Q^{n_j}(r_2)\ast_\phi Q^{m_j}(r_1)\Bigr) \\
&=&Q\Bigl(\sum_{j\in J_1} a_j Q^{k_j}\bigl(Q^{m_j}(r_1) \ast_\phi Q^{n_j}(r_2)\bigr)
+\sum_{j\in J_2} a_j Q^{k_j}\bigl(Q^{n_j}(r_2) \ast_\phi Q^{m_j}(r_1)\bigr)
\Bigr)\\
&=&Q\Bigl( B_{(R,\ast_\phi,Q)}(r_1,r_2)\Bigr).
\end{eqnarray*}
Thus $\phi_{(R, \ast_\phi, Q)}(r_1, r_2) = 0$ for all $r_1, r_2 \in R$ and $(R, \ast_\phi, Q)$ is a non-unitary $\phi$-algebra (of Rota-Baxter type).
\end{proof}

We recall the following conjecture on Rota-Baxter type operators as a case of Rota's problem.

\begin{conjecture}
{\bf (Classification of Rota-Baxter Type Operators)\mcite{GSZ}} For any $c, \lambda \in \bfk$, the operated polynomial $\phi:= \lc x\rc \lc y\rc - \lc B(x,y)\rc$, where $B(x,y)$ is taken from the list below, is of Rota-Baxter type. Moreover, any OPI $\phi$ of Rota-Baxter type is necessarily  defined as above by a $B(x,y)$ from among this list \paren{new types are underlined}.
\smallskip
\begin{enumerate}\setlength\itemsep{3pt}
\item $x\lc y\rc \quad$ \paren{average operator}, 
\item $\lc x\rc y \quad$ \paren{inverse average operator}, 
\item $\underline{x\lc y\rc +y\lc x\rc }$, 
\item $\underline{\lc x\rc y+\lc y\rc x}$, 
\item $x\lc y\rc +\lc x\rc y -\lc xy\rc \quad$ \paren{
Nijenhuis operator}, 
\item $x\lc y\rc +\lc x\rc y + \lambda xy \quad$
\paren{Rota-Baxter operator of weight $\lambda$}, 
\item $\underline{x\lc y\rc -x\lc 1\rc y + \lambda xy}$, 
\item $\underline{\lc x\rc y - x\lc 1\rc y + \lambda xy}$, 
\item $\underline{x\lc y\rc  + \lc x\rc y -x\lc 1\rc y+\lambda xy}
\quad$ \paren{generalized
Leroux TD operator with weight $\lambda$}, 
\item $\underline{x\lc y\rc +\lc x\rc y - xy\lc 1\rc -x\lc 1\rc y+
\lambda xy}$, 
\item $\underline{x\lc y\rc +\lc x\rc y -x\lc 1\rc y-\lc xy\rc +
\lambda xy}$, 
\item $\underline{x\lc y\rc +\lc x\rc y-x\lc 1\rc y-\lc 1\rc xy+
\lambda xy}$, 
\item $\underline{c x\lc 1\rc y + \lambda xy}\quad$ \paren{generalized
endomorphisms}, 
\item $\underline{c y\lc 1\rc x + \lambda yx} \quad$ \paren{generalized
antimorphisms}. 
\end{enumerate}
\mlabel{con:rbt}
\end{conjecture}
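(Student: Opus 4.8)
\medskip\noindent\emph{Proof proposal.} The statement has two parts, which I would attack separately. The first part asserts that for each $B(x,y)$ in the displayed list the OPI $\phi=\lc x\rc\lc y\rc-\lc B(x,y)\rc$ is of Rota-Baxter type; this I would prove by checking, family by family, the four conditions of Definition~\mref{de:rbtype}. Conditions~(\mref{it:rb0}) and~(\mref{it:rb1}) hold by inspection: each listed $B(x,y)$ is visibly a $\bfk$-linear combination of bracketed monomials in which $x$ and $y$ each occur exactly once and whose innermost brackets enclose only a single generator or the unit $1$, so no subword $\lc u\rc\lc v\rc$ occurs; thus $B$ is totally linear and in Rota-Baxter normal form (RBNF). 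The substance lies in conditions~(\mref{it:rb2}) and~(\mref{it:rb3}).

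For condition~(\mref{it:rb2}), termination of $\Pi_\phi(Z)$ for every set $Z$, the plan is to build a well-order $\preceq$ on $\frakM(Z)$ that is compatible with the reduction: every monomial of $\lc B(u,v)\rc$ is $\prec\lc u\rc\lc v\rc$, and $\preceq$ is preserved under every placement $w\mapsto q|_w$. Given such an order, a one-step reduction with respect to $\Pi_\phi$ replaces one monomial of $\Supp(f)$ by a $\bfk$-combination of strictly $\preceq$-smaller monomials (they are genuinely distinct since $\Pi_\phi$ is simple, Corollary~\mref{cor:1nonrec}), so the finite multiset of monomials of $f$ strictly descends in the associated (Dershowitz--Manna) multiset order, which is well-founded because $\preceq$ is; hence no infinite chain exists. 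The naive measure $\deg_P$ fails — for the Rota-Baxter $B$, rewriting $\lc u\rc\lc v\rc$ to $\lc u\lc v\rc\rc$ keeps two bracket pairs — so a finer invariant is needed: first a weight on bracketed words that $B$ never increases, and then, among words of equal weight, a quantity measuring how ``separated'' the top-level brackets are, for instance the number of adjacent bracketed factors $\lc\ \rc\lc\ \rc$ at a common level, which strictly drops whenever two sibling brackets become nested, merged, or pushed deeper — exactly the effect of each monomial of each listed $B$. Packaging this into a genuine monomial order on $\bfk\mapm{Z}$ is in any case what Section~\mref{sec:GSu} requires, so the construction serves double duty; I expect this to be the most delicate step of the first part, since the order must simultaneously work for all fourteen families.

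For condition~(\mref{it:rb3}), for each listed $B$ I would expand $B(B(u,v),w)$ and $B(u,B(v,w))$ in $\bfk\mapm{u,v,w}$, subtract, and exhibit an explicit finite reduction of the difference to $0$ under $\Pi_\phi(\{u,v,w\})$. For the average and inverse-average operators $B=x\lc y\rc$ and $B=\lc x\rc y$ a single reduction step already kills the associator; for the Rota-Baxter, Nijenhuis, and generalized-TD families the computation recovers classical associativity identities; for the new (underlined) families it is a bounded symbolic check, namely the verification carried out in~\mcite{Sit}. Once~(\mref{it:rb3}) holds, Proposition~\mref{prop:psi} and Proposition~\mref{pp:double} supply — as corollaries, not inputs — that the free $\phi$-algebra also satisfies the associator identity and that the induced product $\ast_\phi$ is associative; these are consistency checks only.

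The second part — that \emph{no} $B(x,y)$ outside the list can be of Rota-Baxter type — is the genuinely open assertion, which is why the result is stated as a conjecture. The approach would be: first establish an a priori bound on the complexity (operator degree, and number of monomials) of any Rota-Baxter type $B$, using conditions~(\mref{it:rb2}) and~(\mref{it:rb3}) to rule out arbitrarily deep or wide $B$; then, over the resulting finite search space, run the symbolic elimination of~\mcite{Sit}, discarding every $B$ whose associator fails to $\phi$-reduce to zero or whose rewriting system fails to terminate, and confirming that exactly the fourteen families survive. I expect the first step to be the main obstacle: without a complexity bound the classification is not even a finite problem, and it is far from clear that conditions~(\mref{it:rb2})--(\mref{it:rb3}) alone force such a bound by any elementary argument. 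This is why completeness remains, for now, only conjectural.
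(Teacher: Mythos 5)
Your reading of the statement matches the paper's: the paper proves only the first half, namely Theorem~\mref{thm:exam} verifying that each listed $B(x,y)$ is of Rota-Baxter type, while the completeness assertion is left open, exactly as you say. For the provable half your outline coincides in structure with the paper's: conditions (\mref{it:rb0})--(\mref{it:rb1}) by inspection; termination via a monomial order compatible with $\Pi_\phi$; and an explicit $\phi$-reduction of $B(B(u,v),w)-B(u,B(v,w))$ to zero family by family (the paper carries out these reductions by hand in Theorem~\mref{thm:exam}, matching the expanded terms through explicit permutations, rather than deferring to the symbolic computations of~\mcite{Sit}; your multiset-order termination argument is an acceptable variant of the paper's Theorem~\mref{thm:roc1}).

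The one step of your sketch that would not survive as written is the proposed secondary termination invariant. Counting adjacent bracketed factors does not strictly drop under a rewrite, because the adjacency can simply be pushed one level deeper: for the average operator $B(x,y)=x\lc y\rc$, the step $\lc\lc a\rc\rc\,\lc\lc b\rc\rc\to_\phi\lc\,\lc a\rc\lc\lc b\rc\rc\,\rc$ leaves both the $P$-degree and the total adjacency count unchanged, so the pair (weight, adjacency count) is not a strictly decreasing measure; and a count taken only at the top level is untouched by reductions occurring deep inside a placement $q|_{\,\cdot\,}$. What the paper actually does in Section~\mref{ss:mon} is construct the recursive lexicographic order $\leq_{\db}$: compare first by $\deg_P$ (which never increases for the listed $B$, all of operator degree at most $1$), then by $P$-breadth, then componentwise by the order already built at lower depth. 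The strict drop happens at the level of the rewritten subword ($P$-breadth $2$ versus $1$, Proposition~\mref{pp:rocomp}), and the propagation through arbitrary placements is not a numeric computation but a separately proved structural property of the order (bracket, left and right compatibility, Lemma~\mref{le:eqmord} and Theorem~\mref{thm:mord}). With that construction substituted for your ``weight plus separation measure,'' your plan goes through; the converse classification remains, in the paper as in your proposal, unproved.
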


\section{Rota-Baxter type operators and convergent rewriting
systems} \mlabel{sec:rbcr} In this section, we shall establish the close relationship between a Rota-Baxter type OPI $\phi$ and the convergence of its rewriting systems $\Pi_\phi(Z)$ on $\bfk\mapm{Z}$ for sets $Z$ in the presence of a monomial order that is compatible with $\Pi_\phi$. It would be interesting to explore how the latter condition can be weakened or removed.

\begin{defn} Let $Z$ be a set. For distinct symbols $\star_1,\star_2$ not in $Z$, let $Z^{\star_1,\star_2}: = Z \cup \{\star_1, \star_2\}$. We define a {\bf $(\star_1, \star_2)$-bracketed word on $Z$} to be a bracketed word in $\mapm{Z^{\star_1,\star_2}} = \frakM(Z^{\star_1,\star_2})$ with exactly one occurrence of $\star_1$ and exactly one occurrence of $\star_2$, each counted with multiplicity. The set of $(\star_1, \star_2)$-bracketed words on $Z$ is denoted by either $\mapmZss$ or $\frakM^{\star_1,\star_2}(Z)$. For $q\in \mapmZss$ and $u_1,u_2\in \bfk\mapm{Z}$, we define
\begin{equation}
q|_{u_1,\ u_2}= q|_{\star_1 \mapsto u_1,\star_2 \mapsto u_2},
\mlabel{eq:2star1}
\end{equation}
to be the bracketed word in $\frakM(Z)$ obtained by replacing the symbol $\star_1$ in $q$ by $u_1$ and replacing the symbol $\star_2$ in $q$ by $u_2$, simultaneously.  A {\bf $(u_1, u_2)$-bracketed word on $Z$} is a word of the form Eq.\,(\mref{eq:2star1}) for some $q \in \mapmZss$.
\end{defn}

A $(u_1, u_2)$-bracketed word on $Z$ can also be recursively defined by
\begin{equation}
q|_{u_1,u_2}:=(q^{\star_1}|_{u_1})|_{u_2}, \mlabel{eq:2star2}
\end{equation}
where $q^{\star_1}$ is $q$ when $q$ is regarded as a
$\star_1$-bracketed word on the set $Z\cup\{{\star_2}\}$. Then $q^{\star_1}|_{u_1}$ is in $\frakM^{\star_2}({Z})$ and hence Eq. (\mref{eq:2star2}) is well-defined. Similarly, treating $q$ first as a $\star_2$-bracketed word $q^{\star_2}$ on the set $Z\cup\{{\star_1}\}$, we have
\begin{equation}
q|_{u_1,u_2}:=(q^{\star_2}|_{u_2})|_{u_1}. \mlabel{eq:2star3}
\end{equation}

We describe the relative location of two bracketed subwords, or more precisely, their placements (Definition~\mref{de:subword}), in a bracketed word. See Example \mref{ex:subwords} for motivation and \cite{ZheG} for details.

\begin{defn}
Let $w, u_1, u_2\in \mapm{Z}$ and $q_1, q_2\in \frakM^\star(Z)$ be such that \begin{equation}q_1|_{u_1}=w=q_2|_{u_2}.\mlabel{eq:plas}\end{equation}
The two \plas $(u_1,q_1)$ and $(u_2,q_2)$ are said to be
\begin{enumerate}
\item
{\bf separated} if there exist an element $q$ in $\frakM^{\star_1,\star_2}(Z)$ and $a,b \in \mapm{Z}$ such that $q_1|_{\star_1}=q|_{\star_1,\,b}$, $q_2|_{\star_2}=q|_{a,\,\star_2}$ and $w=q|_{a,\,b}$;
\mlabel{item:bsep}
\item
{\bf nested} if there exists an element $q$ in $\frakM^{\star}(Z)$ such that either $q_2=q_1|_q$ or $q_1=q_2|_q$;
\mlabel{item:bnes}
\item
 {\bf intersecting} if there exist an element $q$ in $\frakM^{\star}(Z)$ and elements $a, b, c$ in $\frakM(Z)\backslash\{1\}$ such that $w=q|_{abc}$ and either
\begin{enumerate}
\item
$ q_1=q|_{\star c}, q_2=q|_{a\star}$; or\mlabel{item:left2}
\item
$q_1=q|_{a\star}, q_2=q|_{\star c}$.
\mlabel{item:right2}
\end{enumerate}
\mlabel{item:bint}
\end{enumerate}
\mlabel{defn:bwrel}
\end{defn}

\begin{remark} The defining conditions in Definition \mref{defn:bwrel} apparently are properties of $q_1$ and $q_2$ that have nothing to do with $u_1$ and $u_2$, but actually they constrain the placements of $u_1$ and $u_2$ to be separated, nested or intersecting for any two subwords $u_1, u_2$ satisfying Eq.\,(\mref{eq:plas}). The conditions are thus stronger than the requirement for a particular pair of $u_1, u_2$. To illustrate this, we view $q_1, q_2$ as strings and write $q_1 = \ell_1 \star r_1$ and $q_2 = \ell_2 \star r_2$.

\begin{enumerate}
\item Condition (\mref{item:bsep}) implies $q|_{\star_1,\,b} = \ell_1 \star_1 r_1$ and $q|_{a,\,\star_2} = \ell_2 \star_2 r_2$. Hence $w = \ell_1 a r_1 = \ell_2 b r_2$. By Eq.\,(\mref{eq:plas}), we must have $u_1 = a$ and $u_2 = b$. Since $q$ is of the form either $\ell \star_1 m \star_2 r$ or $\ell \star_2 m \star_1 r$, the placements $u_1$ and $u_2$ in $w$ are separated.
Note, however, that even if $u_1$ is a subword of  $u_2$ (including equality), their placements may still be separated.

\item Suppose $q_2 = q_1|_q$ in Condition (\mref{item:bnes}) is satisfied and we write $q = \ell \star r$. Then $\ell_2 \star r_2=\ell_1 \ell \star r r_1$ so that $\ell_2 = \ell_1 \ell$ and $r_2 = r r_1$. This means $q_1$ and $q_2$ share an initial string $\ell_1$ and an ending string $r_1$; replacing the inner $\star$-word $q$ by $\star$ collapses $q_2$ to $q_1$. From Eq.\,(\mref{eq:plas}), $\ell_1 u_1 r_1 = w = \ell_2 u_2 r_2 = \ell_1 \ell u_2 r r_1$ and hence
\begin{equation}
u_1=q|_{u_2}.
\mlabel{eq:rel}
\end{equation}
Thus $u_2$ is a bracketed subword of $u_1$. Note that a special case of being nested is when $(u_1, q_1) = (u_2, q_2)$ with $q=\star$.

\item
 Suppose (i) under Condition (\mref{item:bint}) holds. Then we have $w=q_1|_{u_1}=(q|_{\star c})|_{u_1} =q|_{u_1c}$ and $w=q_2|_{u_2}=(q|_{a\star})|_{u_2} =q|_{au_2}$. Then we have $u_1c=au_2=abc$. Thus $u_1=ab$, $u_2=bc$ and they have a nontrivial intersection $b$. Note that the existence of $q$ satisfying all the conditions is crucial. Example: Let $b \ne 1$, $y = abc$, $w=xabybcz$, $u_1=ab$, $u_2=bc$, $q_1 = x\star y bc z$, and $q_2 = xaby\star z$. Then $q_1|_{u_1}=w=q_2|_{u_2}$.  The placements $(u_1, q_1)$ and $(u_2, q_2)$ do not overlap, even though $abc=y$ is a subword of $w$---it occurs at the wrong place.
\mlabel{it:rel3}
\end{enumerate}
\mlabel{rk:rel}
\end{remark}

\begin{exam}
Let $w=x\lc x\rc x, u_1=x\lc x\rc$ and $u_2=x$. Then $u_1$ is a bracketed subword of $w$ with \pla $(u_1,q_1)$ where $q_1=\star x$. Also $u_2$ is a bracketed subword of $w$ in three locations with \plas $(u_2,q_{21})$, $(u_2,q_{22})$, and $(u_2, q_{23})$, where $q_{21}=\star \lc x\rc x$, $q_{22}=x\lc x\rc \star$, and $q_{23} = x \lc \star \rc x$. Then $(u_1,q_1)$ and $(u_2,q_{21})$ are nested, as are $(u_1, q_1)$ and $(u_2, q_{23})$, while $(u_1,q_1)$ and $(u_2,q_{22})$ are separated. Further, denoting $u_3:=\lc x\rc x$, then $(u_3,q_3)$ with $q_3=x \star$ is a \pla of $u_3$ in $w$ and the \plas $(u_1,q_1)$ and $(u_3,q_3)$ are intersecting, while $(u_2, q_{22})$ and $(u_3, q_3)$ are nested.
\mlabel{ex:placement}
\end{exam}

\vspace{0.2in}
\begin{theorem} ~\cite[Theorem 4.11]{ZheG}
Let $w$ be a bracketed word in $\frakM(X)$. For any two \plas $(u_1, q_1)$ and $(u_2,q_2)$ in $w$, exactly one of the following is true:
\begin{enumerate}
\item
$(u_1, q_1)$ and $(u_2,q_2)$ are separated;
\item
$(u_1, q_1)$ and $(u_2,q_2)$ are nested;
\item
$(u_1, q_1)$ and $(u_2,q_2)$ are intersecting.
\end{enumerate}
\mlabel{thm:thrrel}
\end{theorem}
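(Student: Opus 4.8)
The plan is to translate the three relations of Definition~\ref{defn:bwrel} into elementary statements about the intervals of character positions that the two placements occupy in $w$, and then prove the trichotomy at that combinatorial level. The enabling observation is a dictionary between placements and intervals. Viewing a bracketed word as a string over the alphabet $Z\sqcup\{\lc,\rc\}$, for $u\in\frakM(Z)$ and $q=\ell\star r\in\frakM^\star(Z)$ the equation $w=q|_u$ says exactly that $w=\ell u r$ as strings; conversely, if $w=\ell u r$ as strings with $u\in\frakM(Z)$, then deleting the balanced, correctly nested factor $u$ from the correctly nested word $w$ leaves a correctly nested word, so $\ell\star r\in\frakM^\star(Z)$ and $(u,\ell\star r)$ is a bona fide placement. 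Hence a placement of a bracketed subword of $w$ is recorded faithfully by the interval $[i,j]$ of positions it occupies, and the admissible intervals are precisely those for which the substring of $w$ at positions $i$ through $j$ lies in $\frakM(Z)$. The same principle applies to $(\star_1,\star_2)$-bracketed words, which arise by deleting two disjoint factors of this kind; I would record these facts as a preliminary lemma.

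Given the dictionary, mutual exclusivity is immediate from Remark~\ref{rk:rel}: if $(u_1,q_1)$ and $(u_2,q_2)$ are separated their intervals are disjoint; if they are nested one interval contains the other (this is Eq.~(\ref{eq:rel})); and if they are intersecting the intervals overlap but neither contains the other, since in Definition~\ref{defn:bwrel} the common block and the two private blocks are all required to lie in $\frakM(Z)\setminus\{1\}$, hence to be nonempty. For a pair of nonempty intervals the relations ``disjoint'', ``one contains the other'', and ``overlap with neither containing the other'' are pairwise incompatible, so at most one of separated, nested, intersecting can hold. (A placement with $u_i=1$ has empty interval; two such placements are always separated, which is checked directly, so the statement persists in that degenerate case.)

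For existence, write $w=\ell_1 u_1 r_1=\ell_2 u_2 r_2$ with $|\ell_1|\le|\ell_2|$ and let $I_1,I_2$ be the corresponding intervals. One of the following holds: (i) $I_1\cap I_2=\emptyset$; (ii) $I_2\subseteq I_1$; (iii) $I_1\subseteq I_2$; (iv) $I_1$ begins strictly before, and ends strictly before, $I_2$, with $I_1\cap I_2\neq\emptyset$. In case (i) take $q=\ell_1\star_1 m\star_2 r_2$, where $m$ is the (possibly empty) block strictly between $I_1$ and $I_2$, and verify $q_1|_{\star_1}=q|_{\star_1,u_2}$, $q_2|_{\star_2}=q|_{u_1,\star_2}$, $w=q|_{u_1,u_2}$: the placements are separated. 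In case (ii) write $u_1=\alpha u_2\beta$ and verify $q_2=q_1|_{\alpha\star\beta}$, using $\alpha\star\beta\in\frakM^\star(Z)$ by the dictionary applied inside $u_1$: the placements are nested; case (iii) is symmetric. In case (iv) let $a$, $b$, $c$ be respectively the part of $u_1$ before $I_2$, the common block $I_1\cap I_2$, and the part of $u_2$ after $I_1$, so that $u_1=ab$ and $u_2=bc$; then with $q=\ell_1\star r_2$ one has $w=q|_{abc}$, $q_1=q|_{\star c}$, $q_2=q|_{a\star}$, so the placements are intersecting --- provided $a,b,c\in\frakM(Z)\setminus\{1\}$.

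The main obstacle is exactly this proviso: nonemptiness of $a,b,c$ follows at once from the strict inequalities of case (iv), but one must still show each of $a,b,c$ is a balanced, correctly nested bracketed word in its own right (which also guarantees $abc\in\frakM(Z)$, hence $q=\ell_1\star r_2\in\frakM^\star(Z)$). I would argue by bracket depth: let $\delta$ be the depth reached after reading the prefix $a$ of $u_1$; since $u_1$ is correctly nested, $\delta\ge0$, while since $u_1=ab$ is balanced the prefix $b$ of $u_2$ ends at depth $-\delta$, so correct nesting of $u_2$ forces $-\delta\ge0$; hence $\delta=0$, and therefore $a$, $b$, $c$ are each balanced and, being factors of correctly nested words read from depth $0$, each is correctly nested. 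An alternative organization that avoids isolating this point is induction on the depth of $w$ using the canonical decomposition $w=w_1\cdots w_k$ of Eq.~(\ref{eq:omid}): if the two subwords are both top-level blocks they are in the interval situation at the root; if one sits strictly inside a bracketed factor $w_\ell=\lc x_\ell\rc$ one compares the interval of $w_\ell$ with the other block (nested if the block contains $w_\ell$, separated otherwise); and if both sit strictly inside the same $w_\ell$ one recurses into $x_\ell$. The base case is again the interval trichotomy, so the depth computation above reappears, but only for words enclosed in a single pair of brackets.
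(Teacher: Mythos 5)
The paper itself offers no proof of this statement: it is imported wholesale from \cite{ZheG} (Theorem 4.11 there), so there is no internal argument to compare yours against. Judged on its own terms, your proof of the main case is sound, and its ingredients are the natural ones, close in spirit to the depth/Motzkin-word analysis of the cited source: the dictionary identifying a placement $(u,q)$ with the interval of positions it occupies (resting on the fact that deleting a balanced, well-nested factor from a well-nested word leaves a well-nested word, and likewise for two disjoint factors, which gives membership in $\frakM^\star(Z)$ and $\frakM^{\star_1,\star_2}(Z)$); the four-way interval case analysis for existence; and the bracket-depth computation showing that in the overlap case the blocks $a,b,c$ are nonempty elements of $\frakM(Z)$, so that the witnesses demanded by Definition~\mref{defn:bwrel}(\mref{item:bint}) genuinely exist. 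Mutual exclusivity via the interval consequences already recorded in Remark~\mref{rk:rel} is also correct, provided $u_1,u_2\neq 1$.

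The one genuine flaw is the parenthetical disposing of placements of the empty word. It is not true that two placements with $u_i=1$ are always separated, and with $u_i=1$ admitted the ``exactly one'' assertion itself can fail under the paper's definitions. For instance, if $u_1=u_2=1$ and $q_1=q_2=\ell\star r$, the pair is nested (take $q=\star$, the special case noted in Remark~\mref{rk:rel}) and also separated (take $q=\ell\star_1\star_2 r$ with $a=b=1$); similarly $w=\lc x\rc$, $(u_1,q_1)=(\lc x\rc,\star)$, $(u_2,q_2)=(1,\star\lc x\rc)$ is both nested (via $q=\star\lc x\rc$) and separated (via $q=\star_2\star_1$), while an empty placement strictly inside the other's interval, e.g.\ $(1,\lc\star x\rc y)$ against $(\lc x\rc,\star y)$ in $w=\lc x\rc y$, is nested but not separated. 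So the degenerate case does not ``persist''; the correct move is to restrict the statement to nonempty subwords (which is all the paper ever uses, the subwords being $\lc u\rc\lc v\rc$), or to adopt the source's convention excluding $u=1$. With that nonemptiness hypothesis made explicit, your argument is complete.
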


Let a set $Z$ be given. We next give the definition of monomial order on $\frakM(Z)$.

\begin{defn} A {\bf monomial order on $\frakM(Z)$} is a
well order $\leq := \leq_Z$ on $\frakM(Z)$ such that
\begin{equation} u < v
\Longrightarrow q|_u < q|_v, \quad \text{\ for\ all\ } u, v \in \frakM(Z) \text{\ and\ all\ } q \in \frakM^{\star}(Z) .\mlabel{eq:morder}
\end{equation}
Here, as usual, we denote $u < v$ if $u\leq v$ but $u\neq v$.
\end{defn}

Since $\leq$ is a well order, it follows from Eq.\,(\mref{eq:morder}) that $1 \leq u$ and $u < \lc u \rc$ for all $u \in \frakM(Z)$.

\begin{defn} Let $\leq$ be a monomial order on $\mapm{Z}$, $f \in \bfk\mapm{Z}$ and $S \subset \bfk\mapm{Z}$. Let $\phi(x,y):=\lc x\rc \lc y\rc -\lc B(x,y)\rc \in \bfk\mapm{x,y}$.
\begin{enumerate}
\item
The {\bf leading bracketed word (monomial) of $f$} is the (unique) largest monomial $\bar{f}$ appearing in $f$. The {\bf leading coefficient of $f$} is the coefficient of $\bar{f}$ in $f$, which we denote by $c(f)$. If $c(f) = 1$, we say $f$ is {\bf monic with respect to the monomial order $\leq$\,}.
We define the {\bf remainder} $R(f)$ of $f$ by \begin{equation}
R(f):= f - c(f)\bar{f}
\mlabel{eq:lead}
\end{equation}
so that $f=c(f) \bar{f}\dps R(f)$. 
\item Suppose $f$ is $\phi$-reducible. We define the {\bf leading $\phi$-reducible monomial of $f$} to be the monomial $L(f)$ maximal with respect to $\leq$ among monomials $m$ appearing in $f$ that are $\phi$-reducible, that is, $$L(f):=\max\{m\,|\, m \text{ is a monomial of $f$ and $m \notin \frakR(Z)$}\}.$$ \mlabel{item:leadred}

\item Suppose $s$ is monic for all $s \in S$. We define the {\bf rewriting system associated with $S$} to be the set of rewriting rules given by
\begin{equation}
\Pi_S(Z) := \{ \bar{s} \to_S -R(s) \mid s \in S\}
\mlabel{eq:rwS}
\end{equation}
and we denote the reflexive transitive closure of $\to_S$ by $\astarrow_S$. The set $\Irr(S)$ of {\bf irreducibles with respect to $S$ and $\leq$} is defined by
$$ \Irr(S):=\Irr^{Z,\leq}(S) = \frakM(Z)\backslash \left\{\,q|_{\overline{s}} \,|\,
q \in \frakM^\star(Z), s\in S\right\}.$$
An element $f \in \bfk\mapm{Z}$ is {\bf irreducible with respect to $S$} if $f \in \kdot \Irr(S)$.

\item We say $\phi$, or the rewriting system $\Pi_\phi(Z)$ defined by {\rm Eq.~(\mref{eq:Sphi})}, is {\bf compatible with $\leq$} if $\overline{\lc B(u,v)\rc} < \lc u\rc\lc v\rc$ (equivalently, $\overline{\phi(u,v)}=\lc u\rc \lc v\rc$) for all $u, v\in \frakM(Z)$.\mlabel{de:mcompat}

\end{enumerate}\mlabel{def:irrS}
\end{defn}

\begin{remark} When $\phi$ is compatible with $\leq$, and $S := S_\phi(Z)$ (as defined by Eq.\,(\mref{eq:genphi})), the relation $\rightarrow_\phi $ (resp. its reflexive transitive closure $\astarrow_\phi$) is the relation $\to_{S}$ (resp. $\astarrow_S$). If $s \in S$ is given by $s := \lc u \rc \lc v\rc - \lc B(u,v)\rc$, where $u, v \in \mapm{Z}$, then the remainder $R(s)$ is the $(q,u,v)$-complement $R_{q,u,v}(s)$ of $s$. However, in general, if $f$ has a monomial of the form $q|_{\lc u \rc \lc v \rc}$, the remainder $R(f)$ need not be the same as the $(q,u,v)$-complement $R_{q,u,v}(f)$ of $f$, unless $\bar{f} = q|_{\lc u \rc \lc v \rc}$. The set $\Irr(S)$ is precisely the set of monomials that are in RBNF and $f \in \bfk\mapm{Z}$ is $\phi$-reducible if and only if $f$ is $S$-reducible. \mlabel{rmk:compat}
\end{remark}

\begin{lemma} Let $Z$ be a set. Suppose $\phi:= \lc x \rc \lc y \rc - \lc B(x,y) \rc$ is compatible with a monomial order $\leq$ on $\frakM(Z)$. Suppose $g, g' \in \bfk\mapm{Z}$ are both $\phi$-reducible and for $q \in \frakM^\star(Z)$ and $u, v \in \mapm{Z}$, $\quvarrow{g}{g'}{\phi}$. Then $L(g') \leq L(g)$, where equality holds if and only if $L(g)\ne q|_{\lc u\rc\lc v \rc}$. \mlabel{lem:Lred}
\end{lemma}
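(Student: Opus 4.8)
The plan is to follow the single rewriting step $\quvarrow{g}{g'}{\phi}$ at the level of supports and coefficients, and then read off the effect on the leading $\phi$-reducible monomial. By Definition~\mref{def:redsys}, writing $w := q|_{\lc u\rc\lc v\rc}$ and letting $0 \neq c \in \bfk$ be the coefficient of $w$ in $g$, we have $g = c w \dps R_{q,u,v}(g)$ and $g' = c\,q|_{\lc B(u,v)\rc} + R_{q,u,v}(g)$, so that $\Supp(g') \subseteq \bigl(\Supp(g)\setminus\{w\}\bigr) \cup \Supp\bigl(q|_{\lc B(u,v)\rc}\bigr)$. Two elementary observations will get used repeatedly: first, $w$ is itself $\phi$-reducible, being of the form $q|_{\lc u\rc\lc v\rc}$, hence $w \leq L(g)$; second, $R_{q,u,v}(g)$ has zero coefficient at $w$, by definition of the $w$-complement.

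The one auxiliary fact I would establish first is that substitution into a $\star$-bracketed word respects leading monomials: for every $h \in \bfk\mapm{Z}$ and $q \in \frakM^\star(Z)$, one has $\overline{q|_h} = q|_{\bar h}$, with unchanged leading coefficient. This follows by expanding $q|_h = c(h)\,q|_{\bar h} + q|_{R(h)}$ via linearity and observing that each monomial of $q|_{R(h)}$ is $q|_m$ for some $m$ with $m < \bar h$, hence $q|_m < q|_{\bar h}$ by the monomial-order axiom~(\mref{eq:morder}); here one uses that $m \mapsto q|_m$ is injective on $\frakM(Z)$ --- a consequence of the injectivity facts noted at the start of the proof of Lemma~\mref{lem:cyc} --- so that the term $c(h)\,q|_{\bar h}$ cannot be cancelled. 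Applying this with $h = \lc B(u,v)\rc$ and invoking compatibility, $\overline{\lc B(u,v)\rc} < \lc u\rc\lc v\rc$, yields $w' := \overline{q|_{\lc B(u,v)\rc}} = q|_{\overline{\lc B(u,v)\rc}} < q|_{\lc u\rc\lc v\rc} = w$. In particular every monomial occurring in $q|_{\lc B(u,v)\rc}$ is $\leq w' < w \leq L(g)$.

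From here the inequality $L(g') \leq L(g)$ is immediate: a $\phi$-reducible monomial $m$ of $g'$ lies either in $\Supp(g)\setminus\{w\}$, whence it is a $\phi$-reducible monomial of $g$ and so $m \leq L(g)$, or in $\Supp(q|_{\lc B(u,v)\rc})$, whence $m \leq w' < L(g)$. For the equality clause I would split on whether $L(g) = w$. If $L(g) = w$, then the coefficient of $w$ in $g'$ is $0$: it receives nothing from $R_{q,u,v}(g)$ and nothing from $c\,q|_{\lc B(u,v)\rc}$ (all of whose monomials are $< w$), so $w \notin \Supp(g')$, forcing $L(g') \neq w = L(g)$ and hence $L(g') < L(g)$. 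If $L(g) \neq w$, then $L(g) \in \Supp(g)\setminus\{w\}$ is $\phi$-reducible, and its coefficient in $g'$ equals its nonzero coefficient in $g$ plus its coefficient in $c\,q|_{\lc B(u,v)\rc}$; the latter vanishes, since a monomial of $q|_{\lc B(u,v)\rc}$ equal to $L(g)$ would give $L(g) \leq w' < w \leq L(g)$, a contradiction. Hence $L(g) \in \Supp(g')$ and is $\phi$-reducible, so $L(g') \geq L(g)$, and combined with the previous inequality, $L(g') = L(g)$.

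I expect the only step needing genuine care to be the auxiliary identity $\overline{q|_h} = q|_{\bar h}$ --- specifically the no-cancellation point, which rests on injectivity of the substitution map $q|_{-}$; the remainder is bookkeeping on supports and coefficients. A second small point worth stating explicitly is the passage from ``$w$ is a $\phi$-reducible monomial of $g$'' to ``$w \leq L(g)$'', since it is this that allows the monomials emerging from $B(u,v)$ to be measured against $L(g)$ rather than merely against $w$.
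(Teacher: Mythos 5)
Your argument is correct and follows essentially the same route as the paper's proof: decompose $g$ around the rewritten monomial, use compatibility together with the monomial-order axiom to see that every monomial introduced by $q|_{\lc B(u,v)\rc}$ is strictly below $q|_{\lc u\rc \lc v\rc} \leq L(g)$, and then split on whether $L(g)$ equals $q|_{\lc u\rc \lc v\rc}$. The only difference is that you make explicit the no-cancellation fact $\overline{q|_h} = q|_{\bar{h}}$, which the paper uses implicitly.
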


\begin{proof} We may write
$$g = (c_1 m_1 + \cdots + c_n m_n) \dps h$$
for some integer $n \geq 1$; $c_1, \dots, c_n \in \bfk$ are all non-zero;  $m_1 > \cdots > m_n$ are monomials in $\frakM(Z) \backslash \frakR(Z)$; and $h \in \bfk\mapm{Z}$ is in RBNF. By Definition \mref{def:irrS} (\mref{item:leadred}), $L(g) = m_1$. Let $i$, $(1 \leq i \leq n)$, be such that $m_i = q|_{\lc u \rc \lc v \rc}$. Then
\begin{eqnarray*}g'& =&c_i q|_{\lc B(u,v) \rc} \dps R_{q,u,v}(g)\\
&=& c_1 m_1 + \cdots + c_{i-1} m_{i-1} + c_i q|_{\lc B(u,v) \rc } +
c_{i+1} m_{i+1} + \cdots + c_n m_n + h
\end{eqnarray*}
Now $m_1 \geq m_i = q|_{\lc u\rc \lc v \rc} > q|_{\overline{\lc B(u,v) \rc}}$ since $\phi$ is compatible with $\leq$. Thus $L(g') = m_1 = L(g)$ if $i \ne 1$, and $L(g') < m_1$ if $i = 1$.
\end{proof}

\begin{theorem}
Suppose $Z, B(x,y)$ and $\phi$ are as in {\rm Lemma \mref{lem:Lred}}. Then the rewriting system $\Pi_\phi(Z)$ is terminating. \mlabel{thm:roc1}
\end{theorem}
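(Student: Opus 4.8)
The plan is to prove termination by producing a measure of each bracketed polynomial that is well-founded and strictly decreases under every one-step reduction $\to_\phi$. The measure I would use is the support $\Supp(f)\subseteq\frakM(Z)$ of $f\in\bfk\mapm{Z}$, viewed as a finite multiset (in fact a finite set) over the well-ordered set $(\frakM(Z),\leq)$, equipped with the multiset extension $<_{\mathrm{mul}}$ of $\leq$. Since $\leq$ is a well order, so is $<_{\mathrm{mul}}$ (the Dershowitz--Manna theorem, see \cite[\S 2.5]{BN}), hence it has no infinite strictly descending chain. So it suffices to show: if $\quvarrow{g}{g'}{\phi}$ then $\Supp(g')<_{\mathrm{mul}}\Supp(g)$. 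Granting this, an infinite reduction chain $f_0\to_\phi f_1\to_\phi\cdots$ would yield an infinite strictly $<_{\mathrm{mul}}$-descending chain of finite subsets of $\frakM(Z)$, a contradiction, so $\Pi_\phi(Z)$ is terminating.

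For the single-step estimate I would argue as follows. Given $\quvarrow{g}{g'}{\phi}$, set $t:=q|_{\lc u\rc\lc v\rc}$; by Definition~\mref{def:redsys}, $t$ occurs in $g$ with nonzero coefficient $c\in\bfk$, and $g=c\,t\dps R_{q,u,v}(g)$ while $g'=c\,q|_{\lc B(u,v)\rc}+R_{q,u,v}(g)$. The crux is that every monomial occurring in $q|_{\lc B(u,v)\rc}$ is $\leq$-strictly smaller than $t$. To see this, expand $B(u,v)=\sum_j a_j B_j(u,v)$ into distinct monomials $B_j(u,v)$ (possible because $B$ is totally linear and in RBNF), so that $q|_{\lc B(u,v)\rc}=\sum_j a_j\,q|_{\lc B_j(u,v)\rc}$; for each $j$ one has $\lc B_j(u,v)\rc\leq\overline{\lc B(u,v)\rc}<\lc u\rc\lc v\rc$ since $\phi$ is compatible with $\leq$, whence $q|_{\lc B_j(u,v)\rc}<q|_{\lc u\rc\lc v\rc}=t$ by the monomial-order axiom Eq.~(\mref{eq:morder}). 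In particular $t$ does not occur in $q|_{\lc B(u,v)\rc}$, nor in $R_{q,u,v}(g)$ (by definition of the complement), so $t\notin\Supp(g')$; and since $\Supp(R_{q,u,v}(g))=\Supp(g)\setminus\{t\}$, we obtain $\Supp(g')\subseteq(\Supp(g)\setminus\{t\})\cup\Supp(q|_{\lc B(u,v)\rc})$, every element of the latter set being $<t$. This says exactly that $\Supp(g')$ is obtained from $\Supp(g)$ by deleting $t$ (and possibly further elements, through cancellation) and adjoining only elements strictly below $t$, i.e.\ $\Supp(g')<_{\mathrm{mul}}\Supp(g)$.

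I do not expect a serious obstacle: once Eq.~(\mref{eq:morder}) and compatibility are available, the argument is bookkeeping. The one point deserving care is checking that $t$ truly leaves the support under the reduction, i.e.\ that it cannot be re-created either by $q|_{\lc B(u,v)\rc}$ (excluded, since all its monomials are strictly below $t$) or by cancellation within $R_{q,u,v}(g)$ (impossible, as $t\notin\Supp(R_{q,u,v}(g))$), and that the cancellations possibly occurring in the formation of $g'$ only remove monomials and so cannot spoil the multiset comparison. It is worth stressing why the finer measure is needed: Lemma~\mref{lem:Lred} only gives that the leading $\phi$-reducible monomial $L(f)$ is weakly decreasing along $\to_\phi$, which by itself does not rule out an infinite chain; passing to the whole support (equivalently, to the multiset of $\phi$-reducible monomials in $\Supp(f)$) produces a strict decrease at every step. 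Alternatively one could run a well-founded induction on $L(f)$ using Lemma~\mref{lem:Lred}, splitting off the leading $\phi$-reducible monomial and recursing on the smaller remainder, but the multiset argument is cleaner and avoids that case analysis.
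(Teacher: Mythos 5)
Your proof is correct, but it takes a genuinely different route from the paper. The paper proves termination by a minimal-counterexample argument built on Lemma~\mref{lem:Lred}: assuming some element admits an infinite reduction chain, it picks one whose leading $\phi$-reducible monomial $L(g)$ is $\leq$-least, uses the lemma to see that $L(g_i)$ stays constant along the chain and is never the rewritten monomial, strips that leading reducible term off every $g_i$, and so manufactures an infinite chain with a strictly smaller leading reducible monomial, contradicting minimality. You instead exhibit a global well-founded measure: the support $\Supp(f)\subseteq\frakM(Z)$ ordered by the multiset extension of the monomial order. Your one-step estimate is sound: compatibility gives $\overline{\lc B(u,v)\rc}<\lc u\rc\lc v\rc$, so by Eq.~(\mref{eq:morder}) every monomial of $q|_{\lc B(u,v)\rc}$ is strictly below $t=q|_{\lc u\rc\lc v\rc}$, while $t\notin\Supp(R_{q,u,v}(g))$ by the definition of the complement in the decomposition $g=c\,t\dps R_{q,u,v}(g)$; hence $t$ leaves the support and only strictly smaller monomials can enter (the degenerate case $B(u,v)=0$ is covered vacuously), giving a strict multiset decrease at every step, and Dershowitz--Manna well-foundedness (available in the cited~\mcite{BN}) finishes the argument. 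What each approach buys: yours bypasses Lemma~\mref{lem:Lred} entirely and avoids the somewhat delicate equality analysis and subtraction trick in the paper's contradiction argument, at the cost of importing the multiset-order machinery; the paper's argument is self-contained modulo its own lemma, which it reuses anyway in the proof of Lemma~\mref{lem:roc2}, so nothing is wasted there. Your closing remark is also accurate: Lemma~\mref{lem:Lred} alone gives only weak decrease of $L$, so some extra device (minimality plus stripping in the paper, or the multiset refinement in your version) is genuinely needed.
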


\begin{proof} Let
$$\calc=\left\{g\in\bfk\mapm{Z}\,|\, \text{there is an infinite $\phi$-reduction chain} \,g:=g_0\rightarrow_\phi g_1\rightarrow_\phi\cdots\right\}.$$
We only need to prove that $\calc=\emptyset$.
Suppose that $\calc\neq \emptyset$. Since $g$ is $\phi$-reducible for all $g \in \calc$, and $\leq$ is a well order on $\mapm{Z}$, the set $\call:= \{ L(g) \mid g \in \calc \}$, where $L(g)$ is the leading $\phi$-reducible monomial in $g$, is non-empty and has a least element $w_0$. We fix a $g \in \calc$ with $L(g)=w_0$ and fix an infinite $\phi$-reduction chain $g:=g_0\overset{q_0,u_0,v_0}{\longrightarrow}_\phi g_1\overset{q_1,u_1, v_1}{\longrightarrow}_\phi\cdots$. Then we have $g_i\in\calc$ and hence $\phi$-reducible for all $i\geq 1$.  Let $w_i = L(g_i)$. By Lemma \mref{lem:Lred}, we have $w_0 \geq w_1 \geq \dots$.  Since every $g_i$ is in $\calc$, and $w_0$ is the least element in $\call$, we must have $w_0 = w_i$ for all $i$. By Lemma \mref{lem:Lred}, none of the $w_i$ is involved in $\phi$-reduction of the fixed sequence starting with $g$. Let $f_i = g_i - b_i w_i$, where   $b_i$ is the coefficient of $w_i$ in $g_i$. Then we have the infinite reduction sequence $f_0 \overset{q_0,u_0,v_0}{\longrightarrow}_\phi f_1\overset{q_1,u_1, v_1}{\longrightarrow}_\phi\cdots$ and $L(f_0) < L(g)$. This is a contradiction, showing that $\calc = \emptyset$. This completes the proof.
\end{proof}

Now we apply Theorem~\mref{thm:downarrow} to our situation.

\begin{lemma}
Let $\phi(x,y):=\lc x\rc \lc y\rc -\lc B(x,y)\rc \in \bfk\mapm{x,y}$ with
$B(x,y)$ in {\rm RBNF} and totally linear in $x, y$. Let $Z$ be a set and let $\Pi_\phi := \Pi_\phi(Z)$ be the rewriting system in {\rm Eq.}~\paren{\mref{eq:Sphi}}. Let $\leq$ be a monomial order on $\frakM(Z)$ that is compatible with $\Pi_\phi$. Let $Y$ be a subset of $\frakM(Z)$. Suppose that $\Pi_\phi$ restricts to a rewriting system $\Pi_{\phi,Y}$ on $\bfk Y\subseteq \bfk \mapm{Z}$ in the sense of Definition~\mref{de:rwrest}, that is, for any $q\in \frakM^\star(Z)$ and $s\in S_\phi(Z)$, if $q|_{\bar{s}}$ is in $Y$, then $q|_{\bar{s}-s}$ is in $\bfk Y$.   Suppose that $\Pi_{\phi, Y}$ is confluent.
Let $q_i\in \frakM^\star(Z)$ and $s_i\in S_\phi(Z), 1\leq i\leq n,$ be such that $q_i|_{\bar{s_i}}$ is in $Y$ and let $c_i\in \bfk$. If $c_1q_1|_{\bar{s_1}-s_1}+c_2q_2|_{\bar{s_2}-s_2}+\cdots
+c_nq_n|_{\bar{s}_n-s_n}=0$, then $c_1q_1|_{\bar{s_1}}+c_2q_2|_{\bar{s_2}}
+\cdots
+c_nq_n|_{\bar{s_n}}\astarrow_\phi 0.$
\mlabel{lem:redzero}
\end{lemma}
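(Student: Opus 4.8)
The plan is to deduce the statement directly from Theorem~\mref{thm:downarrow}, applied to the restricted rewriting system $\Pi_{\phi,Y}$ on the free $\bfk$-module $\bfk Y$.

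First I would write each $s_i$ explicitly. Since $s_i\in S_\phi(Z)$, there are $u_i,v_i\in\frakM(Z)$ with $s_i=\lc u_i\rc\lc v_i\rc-\lc B(u_i,v_i)\rc$, and because $\phi$ is compatible with $\leq$ we have $\bar{s_i}=\lc u_i\rc\lc v_i\rc$, so $\bar{s_i}-s_i=\lc B(u_i,v_i)\rc$ (see Remark~\mref{rmk:compat}). Hence the monomial $q_i|_{\bar{s_i}}=q_i|_{\lc u_i\rc\lc v_i\rc}$ has $\lc u_i\rc\lc v_i\rc$ as a subword at the placement determined by $q_i$, so it is $\phi$-reducible, and the one-step reduction at that placement is $q_i|_{\bar{s_i}}\to_\phi q_i|_{\lc B(u_i,v_i)\rc}=q_i|_{\bar{s_i}-s_i}$.

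Next I would move the whole picture into $\bfk Y$. By hypothesis $q_i|_{\bar{s_i}}\in Y$ and $q_i|_{\bar{s_i}-s_i}\in\bfk Y$; since $\Pi_\phi$ restricts to $\Pi_{\phi,Y}$ in the sense of Definition~\mref{de:rwrest} (this is exactly the content of the stated hypothesis, cf.\ Remark~\mref{rem:VT}), the reduction $q_i|_{\bar{s_i}}\to_\phi q_i|_{\bar{s_i}-s_i}$ is a one-step reduction of $\Pi_{\phi,Y}$. Multiplying by $c_i$ (the case $c_i=0$ being trivial since then both sides equal $0$), we get $c_i q_i|_{\bar{s_i}}\to_{\Pi_{\phi,Y}} c_i q_i|_{\bar{s_i}-s_i}$, and therefore $c_i q_i|_{\bar{s_i}}\downarrow_{\Pi_{\phi,Y}} c_i q_i|_{\bar{s_i}-s_i}$ by Proposition~\mref{pp:red} (or Lemma~\mref{lem:join1} with $h=0$). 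Moreover $\Pi_{\phi,Y}$ is a simple rewriting system: it is a binary relation on $Y\times\bfk Y$ by the restriction hypothesis, and each of its rules $(q|_{\lc u\rc\lc v\rc},\,q|_{\lc B(u,v)\rc})$ satisfies $q|_{\lc u\rc\lc v\rc}\dps q|_{\lc B(u,v)\rc}$ by Lemma~\mref{lem:cyc} (compare Corollary~\mref{cor:1nonrec}), so Theorem~\mref{thm:downarrow} is available for $(\bfk Y,\Pi_{\phi,Y})$.

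Finally I would invoke Theorem~\mref{thm:downarrow}. Since $\Pi_{\phi,Y}$ is confluent by assumption, property~(\mref{item:add0transpose}) holds; taking $f_i:=c_i q_i|_{\bar{s_i}}$ and $g_i:=c_i q_i|_{\bar{s_i}-s_i}$, we have $f_i\downarrow_{\Pi_{\phi,Y}} g_i$ for $1\leq i\leq n$ and $\sum_{i=1}^n g_i=\sum_{i=1}^n c_i q_i|_{\bar{s_i}-s_i}=0$ by hypothesis, so $\sum_{i=1}^n c_i q_i|_{\bar{s_i}}=\sum_{i=1}^n f_i\astarrow_{\Pi_{\phi,Y}}0$. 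As every one-step $\Pi_{\phi,Y}$-reduction is in particular a one-step $\Pi_\phi$-reduction, this yields $\sum_{i=1}^n c_i q_i|_{\bar{s_i}}\astarrow_\phi 0$, which is the assertion. I do not expect a genuine difficulty here: the only points that need care are the bookkeeping that the single-step reduction used stays inside $\bfk Y$ — which is precisely what the restriction hypothesis supplies — and checking that $\Pi_{\phi,Y}$ inherits simplicity so that Theorem~\mref{thm:downarrow} applies; all the real content has already been packaged into that theorem.
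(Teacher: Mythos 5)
Your proposal is correct and follows essentially the same route as the paper: establish that $\Pi_{\phi,Y}$ is simple via Lemma~\mref{lem:cyc}, note the one-step reductions $q_i|_{\bar{s_i}}\to q_i|_{\bar{s_i}-s_i}$ stay in $\bfk Y$ by the restriction hypothesis, and then apply Theorem~\mref{thm:downarrow} (property~(\mref{item:add0transpose}), available since $\Pi_{\phi,Y}$ is confluent) with $f_i=c_iq_i|_{\bar{s_i}}$, $g_i=c_iq_i|_{\bar{s_i}-s_i}$. You merely spell out details the paper leaves implicit, such as the trivial case $c_i=0$ and the passage from $\astarrow_{\Pi_{\phi,Y}}$ to $\astarrow_\phi$.
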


\begin{proof} Since $\Pi_\phi$ is compatible with the monomial order $\leq $, we have $\bar{s}=\lc u\rc \lc v\rc$ and $\bar{s}-s=\lc B(u,v)\rc$ for all $u,v\in\frakM(Z)$. By Lemma~\mref{lem:cyc}, we have $q|_{\bar{s}}\dps q|_{\bar{s}-s}$, and so the rewriting system $\Pi_{\phi, Y}$ is \simple.
In Theorem~\mref{thm:downarrow}, take $W$ to be $Y$. Then we obtain $c_1q_1|_{\bar{s_1}}+c_2q_2|_{\bar{s_2}}
+\cdots+c_nq_n|_{\bar{s_n}}\astarrow_
{\Pi_{\phi,Y}} 0.$ Thus the lemma follows.
\end{proof}

\begin{theorem}
Let $\bfk$ be a field.
Let $\phi(x,y):=\lc x\rc \lc y\rc -\lc B(x,y)\rc \in \bfk\mapm{x,y}$ with
$B(x,y)$ in {\rm RBNF} and totally linear in $x, y$. Let $Z$ be a set and let $\Pi_\phi := \Pi_\phi(Z)$ be the rewriting system in {\rm Eq.}~\paren{\mref{eq:Sphi}}. Let $\leq$ be a monomial order on $\frakM(Z)$ that is compatible with $\Pi_\phi(Z)$. Then the following conditions are equivalent.
\begin{enumerate}
\item
For all $u, v, w \in \mapm{Z}$, the expression $B(B(u,v),w) - B(u,B(v,w))$ is $\phi$-reducible to zero.\mlabel{it:mainr}
\item $\Pi_\phi$ is convergent. \mlabel{it:mainc}
\end{enumerate}
\mlabel{thm:rbr}
\end{theorem}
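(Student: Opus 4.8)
\emph{Proof plan.} I would prove the two implications separately, disposing of $(\mref{it:mainc})\implies(\mref{it:mainr})$ quickly. For that implication: starting from the monomial $\lc u\rc\lc v\rc\lc w\rc$, total linearity of $B$ lets one reduce it in two ways --- reducing the subword $\lc u\rc\lc v\rc$ first, then distributing $\lc\ \rc$ and $B(\cdot,w)$ over a finite sum, gives $\lc u\rc\lc v\rc\lc w\rc\astarrow_\phi\lc B(B(u,v),w)\rc$; symmetrically $\lc u\rc\lc v\rc\lc w\rc\astarrow_\phi\lc B(u,B(v,w))\rc$. These form a fork, so by confluence they are joinable to a common $p$. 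Since $\Pi_\phi$ is simple (Corollary~\mref{cor:1nonrec}), Theorem~\mref{thm:downarrow} applies: taking $r=2$, $f_1=\lc B(B(u,v),w)\rc$, $g_1=p$, $f_2=-\lc B(u,B(v,w))\rc$, $g_2=-p$ (so $f_i\downarrow_\phi g_i$ and $g_1+g_2=0$) yields $\lc B(B(u,v),w)\rc-\lc B(u,B(v,w))\rc\astarrow_\phi 0$. Finally, any subword $\lc a\rc\lc b\rc$ of a breadth-one word $\lc m\rc$ lies inside $m$, so reductions of $\lc E\rc$ are just reductions of $E$ wrapped in one bracket; peeling off the outer bracket gives $(\mref{it:mainr})$.

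For $(\mref{it:mainr})\implies(\mref{it:mainc})$: termination is free, since the hypotheses on $Z,B,\phi,\leq$ are exactly those of Lemma~\mref{lem:Lred} and Theorem~\mref{thm:roc1} already gives that $\Pi_\phi(Z)$ is terminating. By Newman's Lemma (Lemma~\mref{lem:newman}) it remains to prove local confluence, and by Lemma~\mref{lem:ltcon} --- whose well-order hypothesis holds because compatibility gives $\overline{\lc B(u,v)\rc}<\lc u\rc\lc v\rc$, hence $q|_{\lc B(u,v)\rc}\prec q|_{\lc u\rc\lc v\rc}$ for every $q$ --- it suffices to prove $\Pi_\phi$ is locally term-confluent: for each monomial $t$ carrying two placements $(\lc u_1\rc\lc v_1\rc,q_1)$ and $(\lc u_2\rc\lc v_2\rc,q_2)$, the difference $q_1|_{\lc B(u_1,v_1)\rc}-q_2|_{\lc B(u_2,v_2)\rc}$ of the two one-step results $\phi$-reduces to zero. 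I would then invoke the trichotomy of Theorem~\mref{thm:thrrel}. In the \emph{separated} case $t=q|_{\lc u_1\rc\lc v_1\rc,\,\lc u_2\rc\lc v_2\rc}$ for a two-hole context $q$, and in the \emph{nested} case (say $\lc u_2\rc\lc v_2\rc$ occurs inside $u_1$, giving a one-step reduction $u_1\to_\phi u_1'$) total linearity of $B$ shows both one-step results reduce to a common element ($q|_{\lc B(u_1,v_1)\rc,\,\lc B(u_2,v_2)\rc}$, resp.\ $q_1|_{\lc B(u_1',v_1)\rc}$); neither case needs $(\mref{it:mainr})$. The \emph{intersecting} case is where associativity enters: a unique-factorization/parenthesis-balancing argument forces $t=q|_{\lc\alpha\rc\lc\beta\rc\lc\gamma\rc}$ with the two patterns equal to $\lc\alpha\rc\lc\beta\rc$ and $\lc\beta\rc\lc\gamma\rc$, so the one-step results reduce to $q|_{\lc B(B(\alpha,\beta),\gamma)\rc}$ and $q|_{\lc B(\alpha,B(\beta,\gamma))\rc}$ respectively; then $(\mref{it:mainr})$ gives $B(B(\alpha,\beta),\gamma)-B(\alpha,B(\beta,\gamma))\astarrow_\phi 0$, which, wrapped in a bracket and placed in the context $q$, finishes the reduction to zero. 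Local term-confluence then gives local confluence, hence, with termination, convergence.

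The step I expect to be the main obstacle is the intersecting case, on two fronts. First, one must establish the combinatorial fact that the overlap of two breadth-two ``bracket--bracket'' patterns in $\frakM(Z)$ can only be the configuration $\lc\alpha\rc\lc\beta\rc\lc\gamma\rc$; this has to be extracted carefully from Definition~\mref{defn:bwrel}(\mref{item:bint}) and the unique factorization (\mref{eq:omid}). Second --- and this subtlety is present in all three cases --- local term-confluence requires the strictly stronger conclusion that the difference $\phi$-reduces to zero, not merely that the two results are joinable, while the implication $f\downarrow_\phi g\implies(f-g)\astarrow_\phi 0$ fails in general (Proposition~\mref{pp:red}). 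I would handle this by ordering the reductions so that each step replaces breadth-two bracket content by breadth-one content that is strictly smaller in $\leq$: this keeps the monomials contributed by the two sides disjoint until they are meant to annihilate, ruling out spurious intermediate cancellations. Carrying out this bookkeeping cleanly and uniformly across the three cases is the real work.
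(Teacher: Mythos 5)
Your skeleton for both directions matches the paper's: the easy direction via the fork at $\lc u\rc\lc v\rc\lc w\rc$ plus Theorem~\mref{thm:downarrow}, and the hard direction via Theorem~\mref{thm:roc1}, Newman's Lemma, Lemma~\mref{lem:ltcon}, and the trichotomy of Theorem~\mref{thm:thrrel}, with associativity used only in the intersecting case. But there is a genuine gap exactly at the point you flag as ``the real work'': upgrading joinability to the statement that the \emph{difference} of the two one-step results $\phi$-reduces to zero in the separated and nested cases. Your proposed fix --- ordering the reductions so that ``the monomials contributed by the two sides stay disjoint until they are meant to annihilate'' --- is not available, because the two one-step results can already share monomials before any further reduction is performed. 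For instance, if $B(x,y)$ contains the monomial $xy$ (as it does for the Rota--Baxter operator of weight $\lambda$), take the separated fork on $f=\lc u_1\rc\lc 1\rc\lc 1\rc\lc v_2\rc$ with patterns at positions $(1,2)$ and $(3,4)$: both one-step results contain the monomial $\lc u_1\rc\lc 1\rc\lc v_2\rc$, so their difference has an immediate cancellation, and the surviving terms no longer telescope by any choice of ordering of the remaining reductions. Since $f\downarrow_\phi g$ does not in general give $(f-g)\astarrow_\phi 0$ (as you yourself note via Proposition~\mref{pp:red}), some additional mechanism is required, and your sketch does not supply one.

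The paper's mechanism is a Noetherian induction that your plan omits: assuming a local term-fork whose difference does not reduce to zero, it picks the $\leq$-least such monomial $f$, sets $Y=\{g\in\frakM(Z)\mid g<f\}$, and observes that $\Pi_\phi$ restricts to a simple, locally term-confluent (by minimality), terminating system on $\bfk Y$, hence a \emph{confluent} one; then Lemma~\mref{lem:redzero} (i.e., Theorem~\mref{thm:downarrow}(d) applied on $\bfk Y$) converts ``each piece is joinable to its reduced form and the reduced forms sum to zero'' into ``the whole combination $\astarrow_\phi 0$'', which handles precisely the cancellations above in the separated and nested cases; the intersecting case then reduces, as in your sketch, to hypothesis~(\mref{it:mainr}). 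Without this induction (or an equivalent confluence-below-$f$ argument) the hard direction is not established. Two smaller points: the hypothesis that $\bfk$ is a field is what lets one pass from a local term-fork $(ct\to cv_1, ct\to cv_2)$ to $(t\to v_1,t\to v_2)$, and your plan never says where it is used; and in the nested case the inner reduction $u_1\to_\phi u_1'$ produces a polynomial $u_1'$, so the claimed common reduct $q_1|_{\lc B(u_1',v_1)\rc}$ again hides sums with possible coefficient collisions, which is handled in the paper by the same Lemma~\mref{lem:redzero} device rather than by direct bookkeeping.
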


From the theorem we immediately obtain:
\begin{coro}
Let $\phi(x,y):=\lc x\rc \lc y\rc-\lc B(x,y)\rc\in \bfk\mapm{x,y}$ with $B(x,y)$ in {\rm RBNF} and totally linear in $x, y$. Let $\leq$ be a monomial order on $\frakM(Z)$ that is compatible with $\Pi_\phi(Z)$. Then $P=\lc\, \rc$ is a Rota-Baxter type operator if and only if\, $\Pi_\phi(Z)$ is convergent for every set $Z$.
\mlabel{co:rbcon}
\end{coro}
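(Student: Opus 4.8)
The plan is to deduce the corollary directly from Theorem~\ref{thm:rbr}; the only work is matching the bookkeeping of Definition~\ref{de:rbtype} against the hypotheses and conclusions of that theorem. First I would unpack what it means for $P=\lc\ \rc$ to be of Rota-Baxter type. By Definition~\ref{de:rbtype} this says exactly that: (i) $B(x,y)$ is totally linear in $x,y$; (ii) $B(x,y)$ is in RBNF; (iii) $\Pi_\phi(Z)$ is terminating for every set $Z$; and (iv) for every set $Z$ and all $u,v,w\in\frakM(Z)$ the element $B(B(u,v),w)-B(u,B(v,w))$ is $\phi$-reducible to zero. Conditions (i) and (ii) are already among the standing hypotheses of the corollary, so the substantive content concerns only (iii) and (iv), and these are precisely the ingredients that Theorem~\ref{thm:rbr} links to convergence.

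Next I would argue the two implications. For the forward direction, assume $P$ is of Rota-Baxter type, so in particular condition~(iv), i.e.\ item~(\ref{it:rb3}) of Definition~\ref{de:rbtype}, holds. Fix any set $Z$; by hypothesis there is a monomial order $\leq$ on $\frakM(Z)$ compatible with $\Pi_\phi(Z)$, and condition~(iv) for this $Z$ is exactly item~(\ref{it:mainr}) of Theorem~\ref{thm:rbr}. That theorem then yields item~(\ref{it:mainc}), namely that $\Pi_\phi(Z)$ is convergent; since $Z$ was arbitrary, $\Pi_\phi(Z)$ is convergent for every set $Z$. For the reverse direction, assume $\Pi_\phi(Z)$ is convergent for every set $Z$. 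By Definition~\ref{def:ARS}, convergent means terminating and confluent, so in particular $\Pi_\phi(Z)$ is terminating for every $Z$, which is condition~(iii). Applying Theorem~\ref{thm:rbr} once more in the direction (\ref{it:mainc})~$\Rightarrow$~(\ref{it:mainr}), the convergence of $\Pi_\phi(Z)$ yields condition~(iv). Combined with (i) and (ii) from the standing hypotheses, all four requirements of Definition~\ref{de:rbtype} are met, so $P$ is of Rota-Baxter type.

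Since all the real analysis --- the classification of placements into separated, nested, or intersecting (Theorem~\ref{thm:thrrel}), the reduction of local term-forks, Newman's Lemma (Lemma~\ref{lem:newman}), the direct-sum ($\dotplus$) machinery of Theorem~\ref{thm:downarrow} and Lemma~\ref{lem:redzero}, together with termination via Theorem~\ref{thm:roc1} --- is already carried out inside the proof of Theorem~\ref{thm:rbr}, the corollary itself presents no substantive obstacle; it is essentially a translation of hypotheses. The only points needing care are that the quantifier over all sets $Z$ must be carried uniformly through both directions; that the existence, for each $Z$, of a monomial order compatible with $\Pi_\phi(Z)$ is exactly the standing hypothesis here (and is produced concretely in Section~\ref{sec:evi}); and that, as in Theorem~\ref{thm:rbr}, the ground ring $\bfk$ is taken to be a field. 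It is also worth noting that in the presence of such a compatible order, condition~(iii) is automatic by Theorem~\ref{thm:roc1}, so ``$P$ is of Rota-Baxter type'' in fact collapses to condition~(iv) alone, and condition~(iv) is precisely the bridge to convergence supplied by Theorem~\ref{thm:rbr}.
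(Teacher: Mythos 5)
Your proposal is correct and follows essentially the same route as the paper: both directions are obtained by matching Definition~\ref{de:rbtype} against Theorem~\ref{thm:rbr}, with conditions (i)--(ii) given by the standing hypotheses. The only cosmetic difference is that for the converse you extract termination directly from convergence, whereas the paper (somewhat redundantly) cites Theorem~\ref{thm:roc1}; your observation that, under a compatible monomial order, the whole matter reduces to condition~(\ref{it:rb3}) is exactly the point.
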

\begin{proof}
($\Longrightarrow$)
If $P=\lc\,\rc$ is a Rota-Baxter type operator, then by Definition~\mref{de:rbtype}(\mref{it:rb3}),
the expression $B(B(u,v),w) - B(u,B(v,w))$ is $\phi$-reducible to zero for all $u,v,w\in \mapm{Z}$. Then by Theorem~\mref{thm:rbr}, $\Pi_\phi$ is convergent.
\smallskip

\noindent
($\Longleftarrow$) If $\Pi_\phi$ is convergent, then by Theorem~\mref{thm:rbr}, the expression $B(B(u,v),w) - B(u,B(v,w))$ is $\phi$-reducible to zero for all $u,v,w\in \mapm{Z}$.  Since the rewriting system $\Pi_\phi(Z)$ is compatible with the monomial order $\leq$, by Theorem~\mref{thm:roc1}, $\Pi_\phi$ is terminating.  Together with the conditions that $B(x,y)$ is in {\rm RBNF} and is totally linear in $x, y$, we see that $P=\lc\,\rc$ is a Rota-Baxter operator by definition.
\end{proof}

We now give the proof of Theorem~\mref{thm:rbr}.
\begin{proof} \noindent (\mref{it:mainr}) $\Longrightarrow$ (\mref{it:mainc}).
Recall from Eqs.\,(\mref{eq:phibar}) and (\mref{eq:genphi}) that for $u,v \in \frakM(Z)$, $$\phi(u,v):=\lc u\rc \lc v\rc -\lc B(u,v)\rc, \quad {\rm and} \quad
S_\phi(Z):=\left\{\,\phi(u,v)\, \mid\  u,v \in \frakM(Z)\,\right\}.$$ Since the rewriting system $\Pi_\phi(Z)$ is compatible with the monomial order $\leq$, we have $\bar{s}=\lc u\rc \lc v\rc$ for any $s\in S_\phi(Z)$. Then, for $f \in \frakM(Z)$ and $g \in \bfk\mapm{Z}$,  $f\rightarrow_\phi  g$ means that there are $q\in \frakM^\star(Z)$ and $s\in S_\phi(Z)$ such that $f=q|_{\bar{s}}$ and $g=q|_{\bar{s}-s}$.

By Theorem~\mref{thm:roc1}, the rewriting system $\Pi_\phi$ is terminating. By Lemma~\mref{lem:newman}, to prove that $\Pi_\phi$ is confluent and hence convergent, we just need to prove that $\Pi_\phi$ is locally confluent. By Lemma~\mref{lem:ltcon}, we only need to prove that $\Pi_\phi$ is locally term-confluent, i.e. for any local term fork ($ct\to_\phi cv_1, ct\to_\phi cv_2$) where $(t,v_1), (t,v_2)\in \Pi_\phi$ and $c\in \bfk$, $c\neq 0$, we have $c(v_1-v_2)\astarrow_\phi 0$. Since $\bfk$ is a field, a local term fork ($ct\to_\phi cv_1, ct\to_\phi cv_2$) gives a local term fork ($t\to_\phi v_1, t\to_\phi v_2$). Since $v_1-v_2\astarrow_\phi 0$ implies $c(v_1-v_2)\astarrow_\phi 0$, we see that we only need to prove that,
\begin{quote}
for any local term fork ($t\to_\phi v_1, t\to_\phi v_2$) where $(t,v_1), (t,v_2)\in \Pi_\phi$,
we have $v_1-v_2\astarrow_\phi 0$.
\end{quote}
We will prove this statement by contradiction. Suppose there are local term forks ($t\to_\phi v_1, t\to_\phi v_2$) such that $v_1-v_2\not\astarrow_\phi 0$.  Then the set
$$\frakN:=\left \{\left . f\in \frakM(Z) \begin{array}{l} {} \\{} \end{array}\right |\, \text{ there is a fork }
f \rightarrow_\phi  g_1, f \rightarrow_\phi  g_2 \text{ with } g_1, g_2\in \bfk\mapm{Z} \text{ and } g_1-g_2\not\astarrow_\phi 0\right\}$$
is not empty.
Since $\leq$ is a well order on $\frakM(Z)$, we can take $f$ to be the least element in $\frakN$ with respect to $\leq$.
Thus there are $q_1, q_2\in \frakM^\star(Z)$ and $s_1, s_2\in S_\phi(Z)$ such that $$q_1|_{\bar{s_1}}=f=q_2|_{\bar{s_2}}, \ g_1=q_1|_{\bar{s_1}-s_1},\ g_2=q_2|_{\bar{s_2}-s_2}$$
and $g_1-g_2\not\astarrow_\phi 0$.
Since $q_1|_{\bar{s_1}}=f=q_2|_{\bar{s_2}}$, $\bar{s_1}$ and $\bar{s_2}$ occur in $f$ as bracketed subwords in the forms of \plas $(\bar{s}_1,q_1)$ and $(\bar{s}_2,q_2)$ in $f$. By Theorem~\mref{thm:thrrel},  these \plas $f$ have three possible relative locations.
Accordingly, we will prove that $g_1- g_2\astarrow_\phi 0$ in each of these three cases, yielding the desired contradiction. Before carrying out the proof, we fix some notations.

Let $$Y:=\{ g\in \frakM(Z)\ | g<f\}.$$
By the minimality of $f$ in $\frakN$, for $t\in Y$, all local term forks $(t\to_\phi v_1, t\to_\phi v_2)$ satisfy $v_1-v_2\astarrow_\phi 0$. Note that since the monomial order $\leq$ is compatible with $\Pi_\phi$, $v_1$ and $v_2$ are in $\bfk Y$. In particular, $Y$ is not empty. Then the rewriting system $\Pi_\phi$, which is simple by Corollary~\mref{cor:1nonrec}, restricts to a rewriting system $\Pi_{\phi,Y}$ on $\bfk Y$ which is simple and locally term-confluent. By Lemma~\mref{lem:ltcon}, $\Pi_{\phi,Y}$ on the space $\bfk Y$ is confluent.

Since $s_1, s_2$ are in $S_\phi(Z)$, using Eq.~(\mref{eq:decomp}),  there exist $u, v, r, t \in \frakM(Z)$ such that
\begin{equation}
\begin{array}{lll}
 s_1 &=& \phi(u,v)=\lc u\rc \lc v\rc -\lc B(u,v)\rc=\lc u\rc \lc v\rc-\sum\limits_{i=1}^ka_i \lc B_i(u,v)\rc,\\
s_2 &=& \phi(r,t)\ =\lc r\rc \lc t\rc \,-\, \lc B(r,t)\rc=\lc r\rc \lc t\rc-\sum\limits_{i=1}^ka_i \lc B_i(r,t)\rc.
\end{array}
\mlabel{eq:phimon}
\end{equation}

\smallskip

\noindent {\bf Case I.}
Suppose that the \plas $(\bar{s_1},q_1)$ and $(\bar{s_2},q_2)$ are separated. Then by Definition~\mref{defn:bwrel}, there is $q\in \frakM^{\star_1,\star_2}(Z)$ such that
$$q|_{\bar{s_1},\
\bar{s_2}} = f = q_1|_{\bar{s_1}}=q_2|_{\bar{s_2}}.
$$
By $q_1|_{\star_1}=q|_{\star_1,\,\bar{s_2}}
$ and $q_2|_{\star_2}=q|_{\bar{s_1},\,\star_2},
$
we have
$$g_1=q_1|_{\bar{s_1}-s_1}=q|_{\bar{s_1}-
s_1,\bar{s_2}} \quad {\rm\ and\ } \quad g_2=q_2|_{\bar{s_2}-s_2}
=q|_{\bar{s_1},\bar{s_2}-s_2}.$$
By Eqs.~(\mref{eq:phimon}), we have
$$g_1=\sum_{i=1}^ka_iq|_{\lc B_i(u,v)\rc,\,\bar{s_2}}\quad {\rm\ and }\quad g_2=\sum_{i=1}^ka_iq|_{\bar{s_1},\,\lc B_i(r,t)\rc},$$
and so
\begin{eqnarray*}
g_1-g_2&=&\sum_{i=1}^ka_iq|_{\lc B_i(u,v)\rc, \bar{s_2}}-\sum_{i=1}^ka_iq|_{\bar{s_1},
\lc B_i(r,t)\rc}\\
&=&\sum_{\ell=1}^{2k}c_\ell q_\ell|_{\bar{u_\ell}},
\end{eqnarray*}
where
$$ q_\ell:=\left \{\begin{array}{ll} q|_{\lc B_\ell(u,v)\rc,\star}, & 1\leq \ell\leq k, \\
q|_{\star,\lc B_{\ell-k}(r,t)\rc}, & k+1\leq \ell\leq 2k, \end{array} \right . \quad c_\ell:=\left\{\begin{array}{ll} a_\ell, &1\leq \ell\leq k, \\
-a_{\ell-k}, & k+1\leq \ell\leq 2k. \end{array} \right .
$$
$$
u_\ell:=\left \{\begin{array}{ll}
s_2, &1\leq \ell\leq k, \\
s_1, & k+1\leq \ell\leq 2k, \end{array} \right . \quad
\bar{u_\ell}:= \left \{\begin{array}{ll} \bar{s_2}, &1\leq \ell\leq k, \\ \bar{s_1}, & k+1\leq \ell\leq 2k. \end{array} \right .
$$
Then we have
\begin{eqnarray*}
\sum_{\ell=1}^{2k}c_\ell q_\ell|_{\bar{u_\ell}-u_\ell}&=&
\sum_{i=1}^ka_iq|_{\lc B_i(u,v)\rc, \bar{s_2}-s_2}-\sum_{i=1}^ka_iq|_{
\bar{s_1}-s_1,
\lc B_i(r,t)\rc}\\
&=&\sum_{i,j=1}^ka_iq|_{\lc B_i(u,v)\rc, \lc B_j(r,t)\rc}-\sum_{i,j=1}^ka_iq|_{
\lc B_j(u,v)\rc,
\lc B_i(r,t)\rc}\\
&=&0.
\end{eqnarray*}
Since the monomial order $\leq$ is compatible with $\Pi_\phi$, we have
$$q_\ell|_{\bar{u_\ell}}<q|_{\lc B_\ell(u,v)\rc,\bar{u_\ell}}<q|_{\bar{s_1}
,\bar{s_2}}=f,\quad 1\leq \ell\leq k$$
and
$$
q_\ell|_{\bar{u_\ell}}<q|_{\bar{u_\ell},\lc B_{\ell-k}(r,t)\rc}<q|_{\bar{s_1}
,\bar{s_2}}=f, \quad\, k+1\leq \ell\leq 2k. $$
Then $q_\ell|_{\bar{u_\ell}}$ are in $Y$ for $1\leq \ell\leq 2k$.  By the compatibility of $\leq$ with $\Pi_\phi$ again,  $q_\ell|_{\bar{u_\ell}}>q_\ell|_{\bar{\bar{
u_\ell}-u_\ell}}$, and so $q_\ell|_{\bar{u_\ell}-u_\ell}$ are in $\bfk Y$ for $1\leq \ell\leq 2k$.
By Lemma~\mref{lem:redzero}, $g_1-g_2=\sum\limits_{\ell=1}^{2k}c_\ell q_\ell|_{\bar{u_\ell}}\astarrow_\phi 0$.  This is a contradiction.

\smallskip

\noindent {\bf Case II.}
Suppose  that the \plas $(\bar{s_1},q_1)$ and $(\bar{s_2},q_2)$ are nested. Without loss of generality, assume that there exists $q \in \frakM^\star(Z)$ such that $q_1|_q = q_2$.
We first consider the case when $q = \star$.
Then $q_1 = q_2$. From $q_1|_{\bar{s_1}}=f=q_2|_{\bar{s_2}}$,
we obtain $\bar{s_1}=\bar{s_2}$.
Since $\bar{s_1}=\lc u\rc \lc v\rc $ and $\bar{s_2}=\lc r\rc \lc t\rc $, we must have, by Eq.\,(\mref{eq:decom}),
$\lc u\rc =\lc r\rc$ and
$\lc v\rc =\lc t\rc$. So we have  $u=r$,  $v=t$, and
$ g_1=q_1|_{\lc B(u,v)\rc}=q_2|_{\lc B(u,v)\rc}=g_2,$ and there is nothing more to prove.

Now suppose that $q \ne \star$ and hence $q_1\neq q_2$ and $\bar{s_1} \ne \bar{s_2}$. Since $\bar{s_1} = \lc u \rc \lc v \rc$ and $\bar{s_2} = \lc r\rc\lc t\rc$, by Eq.~(\mref{eq:rel}),
there exists $q' \in \frakM^\star(Z)$ such that
\renewcommand{\theenumi}{{\it\roman{enumi}}}
\begin{enumerate}
\item
either $u = q'|_{\bar{s_2}}$ and $q=\lc q'\rc\,\lc v\rc$; \mlabel{it:incl1}
\item
or $v = q'|_{\bar{s_2}}$ and $q=\lc u\rc \lc q'\rc$. \mlabel{it:incl2}
\end{enumerate}
In subcase (\mref{it:incl1}), we have
$$
g_1-g_2=q_1|_{\bar{s_1}-s_1} -q_2|_{\bar{s_2}-s_2} =q_1|_{\bar{s_1}-s_1} -(q_1|_q)|_{\bar{s_2}-s_2} =q_1|_{{\bar{s_1}-s_1} - q|_{\bar{s_2}-s_2}}.
$$
Since $\lc 0 \rc = 0$, to show $g_1-g_2 \astarrow_\phi 0$, it suffices to prove that ${\bar{s_1}-s_1} - q|_{\bar{s_2}-s_2} = q|_{s_2} - s_1$ (by Eq.\,(\mref{eq:rel})) is $\phi$-reducible to zero. Applying the conditions given in Subcase (\mref{it:incl1})
and expanding $B(x,y) = \sum\limits_{i=1}^k a_i B_i(x,y)$ as a linear combination of distinct monomials $B_i(x,y)$ with non-zero coefficients $a_i \in \bfk$, we have
\begin{eqnarray*}
q|_{s_2} - s_1
&=&\lc q'|_{\bar{s_2}}\rc\lc v\rc-\lc q'|_{\lc B(r,t)\rc}\rc\lc v\rc - \lc u\rc\lc  v \rc + \lc B(u,v)\rc\\
&=&-\lc q'|_{\lc B(r, t)\rc
}\rc\,\lc v\rc + \lc B(q'|_{\lc r\rc \lc t\rc },v)\rc \\
&=& - \sum_{i=1}^k a_i \lc q'|_{\lc B_i(r, t)\rc
}\rc\,\lc v\rc + \sum_{i=1}^k a_i \lc B_i(q'|_{\lc r\rc \lc t\rc },v)\rc\\
&=&
\sum_{\ell=1}^{2k} c_\ell q_\ell|_{\bar{u_\ell}},
\end{eqnarray*}
where
$$ q_\ell:=\left \{\begin{array}{ll} \star, & 1\leq \ell\leq k, \\
\lc B_{\ell-k}(q',v)\rc, & k+1\leq \ell\leq 2k, \end{array} \right . \quad c_\ell:=\left\{\begin{array}{ll} -a_\ell, &1\leq \ell\leq k, \\ a_{\ell-k}, & k+1\leq \ell\leq 2k, \end{array} \right .
$$
$$
u_\ell:=\left \{\begin{array}{ll} \phi(q'|_{\lc B_\ell(r,t)\rc}, v), &1\leq \ell\leq k, \\ \phi(r,t), & k+1\leq \ell\leq 2k, \end{array} \right . \quad
\bar{u_\ell}:= \left \{\begin{array}{ll} \lc q'|_{\lc B_\ell(r,t)\rc}\rc\lc v\rc, &1\leq \ell\leq k, \\ \lc r\rc \lc t\rc, & k+1\leq \ell\leq 2k. \end{array} \right .
$$
Then we have
\begin{eqnarray*}
\sum_{\ell=1}^{2k} c_\ell q_\ell|_{\bar{u_\ell}-u_\ell}&=&\sum_{i=1}^k-a_i\lc B(q'|_{\lc B_i(r, t)\rc},v) \rc
+\sum_{i=1}^ka_{i}\lc B_i(q'|_{\lc B(r,t)\rc},v)\lc\\
&=&- \sum_{i,j=1}^k a_ja_i\lc B_j(q'|_{\lc B_i(r, t)\rc},v)\rc + \sum_{i,j=1}^k a_i a_j \lc B_i(q'|_{\lc B_j(r,t)\rc },v)\rc\\
&=&0.
\end{eqnarray*}

Note that $q_\ell|_{\bar{u_\ell}} < \bar{s_1} \leq q_1|_{\bar{s_1}}=f$ and hence is in $Y$.  Applying Lemma~\mref{lem:redzero} to $\bfk Y$ with $u_\ell, 1\leq \ell\leq 2k,$ given above, we obtain
$$ q|_{s_2} - s_1=\sum_{\ell=1}^{2k} c_\ell q_\ell|_{\bar{u_\ell}}\astarrow_\phi 0.$$
Therefore $g_1-g_2\astarrow_\phi 0.$
Subcase (\mref{it:incl2}) can be similarly treated.
This is again a contradiction.

\smallskip

\noindent {\bf Case III.} Suppose that the \plas $(\bar{s_1},q_1)$ and $(\bar{s_2},q_2)$ are intersecting. Then by Definition~\mref{defn:bwrel}, $q_1 \ne q_2$, and by Remark~\mref{rk:rel}.(\mref{it:rel3}), without loss of generality, we may assume that the partial overlap occurs at a right segment of $\bar{s_1}$ and a left segment of $\bar{s_2}$. Since $\bar{s_1}=\lc u\rc \lc v\rc $ and $\bar{s_2}=\lc r\rc \lc t\rc $, the common segment must be a proper subword, indeed, the subword $\lc v\rc =\lc r\rc$, and so $v=r$. We have $\lc u\rc \lc v\rc \lc t\rc$ appearing in $f$. Let $q \in \frakM^{\star_1,\star_2}(Z)$ be the $(\star_1,\star_2)$-bracketed word obtained by replacing the occurrence of $\lc u\rc \lc v \rc$  in $f$ by $\star_1$ and the occurrence of  $\lc t \rc$ in $f$ by $\star_2$ (thus, $\star_1$ and $\star_2$ are adjacent symbols in $q$).
More precisely, using the convention from Eqs.\,(\mref{eq:2star1}) and (\mref{eq:2star2}), we have
$$q_1 = q^{\star_2}|_{\lc t \rc} \quad {\rm\ and\ } \quad q_2 = q^{\star_1}|_{\lc u\rc},$$
where in the first equation, we identify $\star$ with $\star_1$ and in the second, $\star$ with $\star_2$.
Let $p \in \mapm{Z}^\star$ be the $\star$-bracketed word obtained by replacing $\star_1\star_2$ in $q$ by $\star$. Then using the convention from Eqs.\,(\mref{eq:2star1}) and (\mref{eq:2star2}), we have
\begin{eqnarray*}
g_1 - g_2 &=& q_1|_{\bar{s_1}-s_1} - q_2|_{\bar{s_2}-s_2}\\
&=&(q^{\star_2}|_{\lc
t\rc})|_{(\bar{s_1}-s_1)} - (q^{\star_1}|_{\lc u\rc})|_{(\bar{s_2}-s_2)}\\
&=&p|_{(\bar{s_1}-s_1)\lc t\rc}- p|_{\lc u\rc(\bar{s_2}-s_2)}\\
&=&p|_{((\bar{s_1}-s_1)\lc t\rc - \lc
u\rc(\bar{s_2}-s_2))}\\
&=&p|_{\lc B(u,v)\rc \lc t\rc - \lc u\rc \lc B(v,t)\rc}\\
&\astarrow_\phi&p|_{\lc B(B(u, v),  t)\rc -  \lc B( u,  B(v, t))\rc},
\end{eqnarray*}
where the last step follows from the rewriting
\begin{eqnarray*}
 \lc B(u,v)\rc\lc t\rc -\lc u\rc \lc B(v,t)\rc
 &=&
\lc B(u,v)\rc\lc t\rc \dps (-\lc u\rc \lc B(v,t)\rc) \\
& \rightarrow_\phi & \lc B(B(u,v),t)\rc \dps (-\lc u\rc \lc B(v,t)\rc)\\
& \rightarrow_\phi & \lc B(B(u,v),t)\rc  - \lc B(u,B(v,t))\rc.
\end{eqnarray*}

By assumption (\mref{it:mainr}), $ B(B(u, v),  t) -  B( u, B(v, t)) \astarrow_\phi 0$. So $g_1-g_2 \astarrow_\phi 0$.  This is a contradiction.
\smallskip

This completes the proof of (\mref{it:mainr}) $\Longrightarrow$ (\mref{it:mainc}).

\smallskip

\noindent (\mref{it:mainc}) $\Longrightarrow$ (\mref{it:mainr}). Suppose that the rewriting system $\Pi_\phi$ is convergent. Then it is confluent. Thus for any $u, v, w\in \frakM(Z)$, the fork
\begin{eqnarray*}
\Bigl(\lc u\rc\lc v\rc \lc w\rc \rightarrow_\phi \lc u\rc \lc B(v,w)\rc &\rightarrow_\phi & \lc B(u,B(v,w))\rc, \\
\lc u\rc\lc v\rc \lc w\rc &\rightarrow_\phi &\lc B(u,v)\rc  \lc
w\rc \rightarrow_\phi  \lc B(B(u,v),w)\rc \Bigr)
\end{eqnarray*}
is joinable. Then by Theorem~\mref{thm:downarrow}, we have $\lc B(B(u,v),w)\rc -\lc B(u,B(v,w))\rc \astarrow_\phi 0$ and $B(B(u,v),w)-B(u,B(v,w)) \astarrow_\phi 0$.
This proves (\mref{it:mainr}).
\end{proof}

\section{Rota-Baxter type operators and Gr\"obner-Shirshov basis}
\mlabel{sec:GSu} We now characterize Rota-Baxter type operators in terms of Gr\"obner-Shirshov bases. The main theorem and its proof are given in Section~\mref{ss:main}. The application of the main theorem to the construction of free objects is provided in Section~\mref{ss:constru}.

\subsection{CD lemma and the main theorem}
\mlabel{ss:main} We provide some background and then state the main theorem on  Gr\"obner-Shirshov bases for Rota-Baxter type operators.
\begin{defn}
Let $\leq$ be a monomial order on $\mapm{Z}$.
Let $f, g \in \bfk\mapm{Z}$ be distinct  monic bracketed polynomials
with respect to the monomial order $\leq$ (See Definition~\mref{def:irrS}).
If there exist $\mu,\nu, w\in \frakM(Z)$ such that $w=\bar{f}\mu=\nu\bar{g}$ with $\max\{\bre{\bar{f}},\bre{\bar{g}}\}<\bre{w}< \bre{\bar{f}}+\bre{\bar{g}}$, we call the operated polynomial
$$(f,g)^{\mu,\nu}_w:=f\mu-\nu g$$
the {\bf intersection composition of $f$ and $g$ with respect to $(\mu,\nu)$}.
If there exist $q\in \frakM^{\star}(Z)$ and
$w \in \frakM(Z)$ such that $w =\bar{f}=q|_{\bar{g}}$,  we call the operated polynomial
$$(f,g)^{q}_w:=f-q|_{g}$$ the {\bf including
composition of $f$ and $g$ with respect to $q$}.
\end{defn}

\begin{defn}
Let $\leq$ be a monomial order on $\mapm{Z}$.
 Let $S$ be a set of monic bracketed polynomials in $\bfk\mapm{Z}$ and let $w \in \frakM(Z)$ be a monomial. An operated polynomial $f \in \bfk\mapm{Z}$ is called {\bf trivial modulo $S$ with bound $w$} or, in short, {\bf trivial modulo $(S, w)$} if it can be expressed as $f = \sum_ic_iq_i|_{s_i}$
with $c_i\in \bfk$,  $q_i \in \frakM^{\star}(Z)$, $s_i\in S$ (so $f \in \Id(S)$ by Eq.\,(\mref{eq:repgen})) and $q_i|_{\overline{s_i}}< w$.
\mlabel{de:trivial}
\end{defn}

\begin{defn}
A set $S\subseteq \bfk\mapm{Z}$ of monic bracketed polynomials is called a {\bf Gr\"{o}bner-Shirshov basis with respect to $\leq$} if, for all pairs $f,g\in S$ with $f\neq g$, every intersection composition of the form $(f,g)_w^{\mu,\nu}$ is trivial modulo $S$  with bound $w$, and every including composition of the form $(f,g)_w^q$ is trivial modulo $S$ with bound $w$.
\mlabel{de:GS}
\end{defn}

The following Composition-Diamond Lemma is the basic fact for our study of Rota-Baxter type operators.

\begin{theorem} \rm $($Composition-Diamond Lemma$)$\mcite{BCQ,GSZ}
Let $S$ be a set of monic bracketed polynomials in $\bfk\mapm{Z}$.   Then the following conditions are equivalent.
\renewcommand{\theenumi}{{\it\alph{enumi}}}
\begin{enumerate}
\item $S $ is a Gr\"{o}bner-Shirshov basis in $\bfk\mapm{Z}$.
\mlabel{it:cd1}
\item For every non-zero $f \in \Id(S)$, $\bar{f}=q|_{\overline{s}}$
for some $q \in \frakM^\star(Z)$ and some $s\in S$. \mlabel{it:cd2}
\item  For every non-zero $f \in \Id(S)$, $f$ can be expressed in {\bf triangular form}, that is, in the form
\begin{equation}
f= c_1q_1|_{s_1}+ c_2q_2|_{s_2}+\cdots+ c_nq_n|_{s_n}, \label{eq:fexp1}
\end{equation}
where for $1 \leq i \leq n$, $c_i\in \bfk$ ($c_i \neq 0$), $s_i\in S$, $q_i\in \frakM^{\star}(Z)$ and $ q_1|_{\overline{s_1}}> q_2|_{\overline{s_2}}
> \cdots> q_n|_{\overline{s_n}}$\,. \mlabel{it:cd3}
\item As $\bfk$-modules, $\bfk\mapm{Z}=\bfk\cdot\Irr(S)\oplus \Id(S)$
and $\Irr(S)$ is a $\bfk$-basis of $\bfk\mapm{Z}/\Id(S)$. \mlabel{it:cd4}
\end{enumerate}
\mlabel{thm:CDL}
\end{theorem}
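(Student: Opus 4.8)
The plan is to establish the equivalences by proving the single non-trivial implication $(a)\implies(b)$, together with the routine facts that $(b)$ is equivalent to each of $(c)$ and $(d)$ and that $(b)\implies(a)$; then all four conditions are equivalent. The direction $(c)\implies(b)$ is immediate: in a triangular expansion $f=\sum_i c_iq_i|_{s_i}$ with $q_1|_{\overline{s_1}}>q_2|_{\overline{s_2}}>\cdots$ the monomial $q_1|_{\overline{s_1}}$ occurs only in the first summand, so $\bar f=q_1|_{\overline{s_1}}$. Conversely $(b)\implies(c)$ follows by Noetherian induction on $\bar f$ along the well order $\leq$, subtracting $c(f)\,q_1|_{s_1}$ to strictly lower the leading monomial. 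For $(b)\implies(d)$: one gets $\bfk\mapm{Z}=\bfk\cdot\Irr(S)+\Id(S)$ by repeatedly eliminating the largest $S$-reducible monomial of an element (which terminates by the well order), while $\bfk\cdot\Irr(S)\cap\Id(S)=0$ because a non-zero element of $\Id(S)$ has, by $(b)$, leading monomial of the form $q|_{\overline{s}}\notin\Irr(S)$. For $(d)\implies(b)$: if $0\neq f\in\Id(S)$ had $\bar f\in\Irr(S)$, the same elimination never touches $\bar f$ and would produce $f'\in\bfk\cdot\Irr(S)$ with $\bar{f'}=\bar f\neq0$ and $f-f'\in\Id(S)$, whence $f'\in\bfk\cdot\Irr(S)\cap\Id(S)=0$, a contradiction. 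Finally $(b)\implies(a)$: any intersection or including composition $h$ with bound $w$ lies in $\Id(S)$ with $\bar h<w$, and applying $(b)$ together with the descent just described expresses $h=\sum_i c_iq_i|_{s_i}$ with every $q_i|_{\overline{s_i}}\leq\bar h<w$, i.e.\ $h$ is trivial modulo $(S,w)$.

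The core is $(a)\implies(b)$. Let $0\neq f\in\Id(S)$. By Eq.~(\mref{eq:repgen}) write $f=\sum_{i=1}^n c_iq_i|_{s_i}$ with $c_i\in\bfk\setminus\{0\}$, $q_i\in\frakM^\star(Z)$, $s_i\in S$. Since every monomial of $q_i|_{s_i}$ is $\leq q_i|_{\overline{s_i}}$, setting $w:=\max_i q_i|_{\overline{s_i}}$ forces $\bar f\leq w$. Among all such representations of $f$, fix one for which $w$ is least in $\leq$, and among those one minimizing the number $N$ of indices $i$ with $q_i|_{\overline{s_i}}=w$. It suffices to prove $\bar f=w$, for then $\bar f=q_i|_{\overline{s_i}}$ for such an $i$, which is $(b)$. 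Suppose instead $\bar f<w$; then the coefficient of $w$ in $f$ vanishes, so $N\geq2$, say $q_1|_{\overline{s_1}}=q_2|_{\overline{s_2}}=w$. By Theorem~\mref{thm:thrrel} the two placements $(\overline{s_1},q_1)$ and $(\overline{s_2},q_2)$ of subwords of $w$ are separated, nested, or intersecting.

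In each case the aim is to show
$$ q_1|_{s_1}-q_2|_{s_2}=\sum_j d_j\,p_j|_{t_j},\qquad p_j\in\frakM^\star(Z),\ t_j\in S,\ p_j|_{\overline{t_j}}<w. $$
If the placements are nested, say $q_2=q_1|_q$ and $\overline{s_1}=q|_{\overline{s_2}}$ (Eq.~(\mref{eq:rel})), then the including composition $(s_1,s_2)^q_{\overline{s_1}}=s_1-q|_{s_2}$ is trivial modulo $(S,\overline{s_1})$ by $(a)$, and composing this expression with $q_1$ and using compatibility of $\leq$ with the $\frakM^\star(Z)$-action (Eq.~(\mref{eq:morder})) gives the desired form. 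If they are intersecting, one argues identically with the corresponding intersection composition. If they are separated, say $w=q|_{\overline{s_1},\overline{s_2}}$ with $q_1|_{s_1}=q|_{s_1,\overline{s_2}}$ and $q_2|_{s_2}=q|_{\overline{s_1},s_2}$, then bilinearity of the substitution gives $q_1|_{s_1}-q_2|_{s_2}=q|_{R(s_1),s_2}-q|_{s_1,R(s_2)}$, already of the desired form with no hypothesis needed. Now substitute
$$ c_1q_1|_{s_1}+c_2q_2|_{s_2}=c_1\bigl(q_1|_{s_1}-q_2|_{s_2}\bigr)+(c_1+c_2)\,q_2|_{s_2} $$
back into $f=\sum_i c_iq_i|_{s_i}$: the resulting representation of $f$ still has bound $\leq w$ (the added terms $p_j|_{t_j}$ have leading monomial $<w$, and $q_i|_{\overline{s_i}}\leq w$ for $i\geq3$), but strictly fewer indices attaining $w$, since the two $w$-terms are replaced by at most the single term $(c_1+c_2)q_2|_{s_2}$. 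This contradicts the minimality of the pair $(w,N)$, so $\bar f=w$ and $(b)$ holds.

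The main obstacle is exactly this last argument. One must have the trichotomy of Theorem~\mref{thm:thrrel} available to split into the three overlap types; must recognize that nested (respectively intersecting) placements of the monic leading monomials $\overline{s_1},\overline{s_2}$ genuinely produce an including (respectively intersection) composition as in Definition~\mref{de:GS}, so that the Gr\"obner--Shirshov hypothesis is applicable; and must treat the ``separated'' case directly, it being the bracketed-word analogue of the classical fact that disjoint leading monomials give automatically joinable $S$-polynomials. The delicate bookkeeping is to confirm that each substitution leaves the bound unchanged (or lowered) while strictly decreasing the count of top terms, so that the well-founded minimality is really violated; this uses Eq.~(\mref{eq:morder}) and the monic normalization of $S$ repeatedly. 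Everything else — the two descent arguments and the three easy implications — is standard and needs only the well order $\leq$ and Eq.~(\mref{eq:repgen}).
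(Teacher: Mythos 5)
The paper itself gives no proof of Theorem~\mref{thm:CDL}: it is quoted from \mcite{BCQ,GSZ}. Your proposal is essentially the standard Composition--Diamond argument from those sources. The easy cycle is fine: (c)$\Rightarrow$(b) because all monomials of $q_i|_{s_i}$ are $\leq q_i|_{\overline{s_i}}$, (b)$\Rightarrow$(c) and (b)$\Rightarrow$(d) by leading-term descent along the well order, (d)$\Rightarrow$(b) because the elimination never disturbs an irreducible leading monomial, and (b)$\Rightarrow$(a) because a composition lies in $\Id(S)$ with leading monomial strictly below its bound. The core (a)$\Rightarrow$(b) via a representation minimizing the pair $(w,N)$, the trichotomy of Theorem~\mref{thm:thrrel}, and the observation that the separated case needs no hypothesis while nested and intersecting placements are absorbed by compositions, is also the standard route, and your bookkeeping (each replacement keeps the bound $\leq w$ and strictly lowers $N$, or lowers $w$) is correct.

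There is, however, one genuine gap in the case analysis: you tacitly assume that whenever two top terms share the bound $w$, the pair $(s_1,s_2)$ is covered by the Gr\"obner--Shirshov hypothesis. Under the paper's Definition~\mref{de:GS}, compositions are only required for \emph{distinct} $f\neq g$ in $S$, so the subcase $s_1=s_2$ with \emph{intersecting} placements (a self-overlap $\overline{s_1}=ab=bc$ with $a,b,c\neq 1$) is not covered by hypothesis (a), and your appeal to the corresponding intersection composition has nothing behind it. (The nested subcase is harmless: $q\neq\star$ forces $\overline{s_1}\neq\overline{s_2}$, hence $s_1\neq s_2$, while $q=\star$ with $s_1=s_2$ just merges the two terms.) Indeed, with the literal Definition~\mref{de:GS} the implication (a)$\Rightarrow$(b) fails: take $Z=\{x,y\}$ with $x>y$, the order $\leq_{\db}$, and $S=\{s\}$ with $s=xx-yx$; condition (a) holds vacuously, yet $sx-xs=xyx-yxx$ is a nonzero element of $\Id(S)$ whose leading monomial $xyx$ contains no placement of $\bar{s}=xx$. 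So you must either work with the definition of the cited sources, which include compositions of an element with itself (self-overlaps of $\bar{s}$), or handle that subcase separately; in the paper's application $S=S_\phi(Z)$ the self-overlap case ($u=v$, giving $\lc u\rc\lc u\rc\lc u\rc$) is checked by exactly the same computation as $u\neq v$, so the amended argument does cover everything actually used later.
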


\begin{exam} Let $\phi \in \bfk\mapm{x,y}$ be an OPI of Rota Baxter type and let $S = S_\phi(Z)$. Then by Proposition~\mref{pp:frpio} $\bfk\mapm{Z}/\Id(S) = \bfk\mapm{Z}/I_\phi(Z)$ is the free $\phi$-algebra $\bfk_\phi\mapm{Z}$. If $S_\phi(Z)$ is a Gr\"obner-Shirshov basis of $\bfk\mapm{Z}$, then $\Irr(S_\phi(Z))$ is a basis of $\bfk_\phi\mapm{Z}$.\end{exam}

We next review more general reduction relations for operated polynomial algebras $\bfk\mapm{Z}$ from \mcite{GSZ}. These relations generalize those for polynomial algebras $\bfk[Z]$ (\cite[Section 8.2]{BN}) and, under suitable conditions, we will show they include $\astarrow_\phi$.

\begin{defn}
Let $s\in \bfk\mapm{Z}$ be monic with leading term $\bar{s}$. We use $s$ to define the following {\bf reduction relation} $\rightarrow_s$: For $g, g'\in \bfk\mapm{Z}$, let $g\rightarrow_s g'$ denote the relation that there are some $c \in \bfk$ ($c \ne 0)$ and some $q\in \frakM^\star(Z)$ such that
\begin{enumerate}
\item
$g=c q|_{\bar{s}}\dps R(g)$, where $R(g):=g-cq|_{\bar{s}}\in\bfk\mapm{Z}$;
\item
$g'=c q|_{\bar{s}-s}+R(g)$.
\end{enumerate}
Equivalently, we  say that $g\rightarrow_s g'$ if there are some $c \in \bfk$ ($c \ne 0)$ and some $q\in \frakM^\star(Z)$ such that
\begin{enumerate}
\item
$q|_{\bar{s}}$ is a monomial of $g$ with coefficient $c$;
\item
$g'=g-cq|_{s}$.
\end{enumerate}\mlabel{def:redrel}
\end{defn}
If $S$ is a set of monic bracketed polynomials, we let $g \rightarrow_S g'$ denote the relation that $g \rightarrow_s g'$ for some $s \in S$. The reflexive transitive closure of $\rightarrow_s$ and $\rightarrow_S$ are denoted by $\astarrow_s$ and $\astarrow_S$, respectively.

\begin{lemma}
Let $\phi(x,y):=\lc x\rc \lc y\rc -\lc B(x,y)\rc \in \bfk\mapm{x,y}$ with $B(x,y)$ in {\rm RBNF} and totally linear in $x, y$. Let $Z$ be a set, let $S = S_\phi(Z)$ and let $\leq$ be a monomial order on $\frakM(Z)$ that is compatible with $\Pi_\phi(Z)$. Let $g\in\bfk\mapm{Z}$ and $w\in \mapm{Z}$ be given with $w>\bar{g}$. If $g\astarrow_\phi0$, then $g$ is trivial modulo $(S,w)$.
\mlabel{lem:roc2}
\end{lemma}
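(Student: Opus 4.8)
The plan is to unwind a $\phi$-reduction chain witnessing $g \astarrow_\phi 0$ and read off from it directly an expression of $g$ as a $\bfk$-combination of elements $q_i|_{s_i}$ ($s_i \in S$) with the bound required by Definition~\mref{de:trivial}. First I would invoke Remark~\mref{rmk:compat}: since $\phi$ is compatible with $\leq$ and $S = S_\phi(Z)$, the relation $\rightarrow_\phi$ is literally the relation $\rightarrow_S$. So a chain $g = g_0 \rightarrow_\phi g_1 \rightarrow_\phi \cdots \rightarrow_\phi g_n = 0$ means, by Definition~\mref{def:redrel}, that for each $i$ with $1 \le i \le n$ there are $c_i \in \bfk$ ($c_i \neq 0$), $q_i \in \frakM^\star(Z)$ and $s_i \in S$ such that $q_i|_{\overline{s_i}}$ is a monomial of $g_{i-1}$ with coefficient $c_i$ and $g_i = g_{i-1} - c_i q_i|_{s_i}$. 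Telescoping and using $g_n = 0$ gives $g = \sum_{i=1}^n c_i q_i|_{s_i}$, which in particular re-proves $g \in \Id(S)$; what remains is to verify the bound $q_i|_{\overline{s_i}} < w$ for every $i$.

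For this, the key step I would establish is that the leading monomial does not increase along the chain: $\overline{g_i} \leq \overline{g_{i-1}}$ for $1 \leq i \leq n$. Write $s_i$, which is monic, as $s_i = \overline{s_i} + R(s_i)$ with every monomial of $R(s_i)$ strictly below $\overline{s_i}$; this is exactly where compatibility of $\leq$ with $\Pi_\phi$ is used, since $\overline{s_i} = \lc u_i\rc\lc v_i\rc$ and $\overline{s_i} - s_i = \lc B(u_i,v_i)\rc$ with $\overline{\lc B(u_i,v_i)\rc} < \lc u_i\rc\lc v_i\rc$. Then $q_i|_{s_i} = q_i|_{\overline{s_i}} + q_i|_{R(s_i)}$, and by the defining property of a monomial order every monomial of $q_i|_{R(s_i)}$ lies strictly below $q_i|_{\overline{s_i}}$. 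Hence the support of $g_i = \bigl(g_{i-1} - c_i q_i|_{\overline{s_i}}\bigr) - c_i q_i|_{R(s_i)}$ is contained in $\{m \in \frakM(Z) \mid m \leq \overline{g_{i-1}}\}$: the first bracket is $g_{i-1}$ with the monomial $q_i|_{\overline{s_i}}$ deleted, and the second contributes only monomials lying below $q_i|_{\overline{s_i}} \leq \overline{g_{i-1}}$. Iterating, $\overline{g_{i-1}} \leq \overline{g_0} = \overline{g} < w$ for all $i$.

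Finally, since $q_i|_{\overline{s_i}} \in \Supp(g_{i-1})$ we get $q_i|_{\overline{s_i}} \leq \overline{g_{i-1}} < w$ for every $i$, so $g = \sum_{i=1}^n c_i q_i|_{s_i}$ is trivial modulo $(S,w)$; the degenerate case $n = 0$ forces $g = 0$, which is trivially trivial modulo $(S,w)$ via the empty sum. I expect no serious obstacle: the one point requiring care is the middle paragraph, namely checking that rewriting a reducible monomial of $g_{i-1}$ that happens not to be its leading monomial cannot produce a monomial exceeding $\overline{g_{i-1}}$, and this is precisely what the compatibility of the monomial order with $\Pi_\phi$ guarantees.
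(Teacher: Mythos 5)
Your proposal is correct and follows essentially the same route as the paper's proof: telescope the one-step reductions to write $g=\sum_i c_i q_i|_{s_i}$ with $s_i\in S$, and show via compatibility of $\leq$ with $\Pi_\phi$ that leading monomials do not increase along the chain, so each $q_i|_{\overline{s_i}}\leq \bar{g}<w$. Your support-containment argument for the monotonicity step is a cleaner packaging of the paper's Claim~\mref{cl:les} (which proceeds by a case analysis on whether the rewritten monomial is the leading one), but it is the same key step and the same overall structure.
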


\begin{proof}
By Theorem~\mref{thm:roc1},  the rewriting system $\Pi_\phi$ is terminating. From $g\astarrow_\phi0$, there exist $k\geq 1$ and $g_i\in \bfk\mapm{Z}, 1\leq i\leq k$, such that
$$g=:g_1\to_\phi g_2\to_\phi \cdots\to_\phi g_k:=0.$$
\begin{claim}
We have $\bar{g_{i+1}}\leq \bar{g_i}, 1\leq i\leq k-1$.
\mlabel{cl:les}
\end{claim}
\begin{proof}
For $1\leq i\leq k$, we can rewrite $g_i$ as
\begin{equation}
g_i=(c_{i,1}u_{i,1}+\cdots +c_{i,n_i}u_{i,n_i})\dps g_{i,2},
\mlabel{eq:gsum}
\end{equation}
where for $1\leq j\leq n_i$, $0\neq c_{i,j}\in\bfk$, $u_{i,j}$ is not in RBNF, $u_{i,1}>\cdots>u_{i,n_i}$ with respect to $\leq$ and $g_{i,2}\in\bfk\mapm{Z}$ is in RBNF.
For any reduction $g_i\to_\phi g_{i+1}$, $1\leq i\leq k-1$, there exist $q_i\in\frakM^{\star}(Z)$, $u_i,v_i\in\frakM(Z)$ and $0\neq c_i\in\bfk$ such that
$$g_i=c_iq_i|_{\lc u_i\rc\lc v_i\rc}\dps R(g_i) \quad \text{and} \quad g_{i+1}=c_iq_i|_{\lc B(u_i,v_i)\rc} +R(g_i).$$
Then $q_i|_{\lc u_i\rc \lc v_i\rc}$ is a monomial of $g_i$ and is not in RBNF. Since $g_i=(c_{i,1}u_{i,1}+\cdots +c_{i,n_i}u_{i,n_i})\dps g_{i,2}$, there exists a natural number $1\leq r_i\leq n_i$ such that $u_{i,r_i}=q_i|_{\lc u_i\rc\lc v_i\rc}$ and $c_i=c_{i,r_i}.$ Then $$R(g_i)=(c_{i,1}u_{i,1}+\cdots+c_{i,n_i}u_
{i,n_i}-c_{i,r_i}u_{i,r_i}) + g_{i,2}.$$
So we get
\begin{equation}
g_{i+1}=c_iq_i|_{\lc B(u_i,v_i)\rc}+(c_{i,1}u_{i,1}+\cdots+c_{i,n_i}u_
{i,n_i}-c_{i,r_i}u_{i,r_i}) + g_{i,2}.
\mlabel{eq:gsum2}
\end{equation}
We distinguish two cases, depending on whether or not  $\bar{g_i} =\bar{g_{i,2}}$.

\smallskip
\noindent
{\bf Case 1. Suppose $\bar{g_i}=\bar{g_{i,2}}$:} By Eq.~(\mref{eq:gsum}),  $\bar{g_{i,2}}>u_{i,1}$. By the compatibility of $\phi$ with $\leq$ and the monomial property in Eq.~(\mref{eq:morder}), we have
$$u_{i,r_i}=q_i|_{\lc u_i\rc \lc v_i\rc}>q_i|_{\bar{\lc B(u_i,v_i)\rc}}.$$
Since $u_{i,1}\geq u_{i,r_i}$, we have
$$\bar{g_{i,2}}>u_{i,1}\geq u_{i,r_i}>q_i|_{\bar{\lc B(u_i,v_i)\rc}}.$$
By $u_{i,1}>\cdots>u_{i,n_i}$ and Eq.~(\mref{eq:gsum2}), we get
$\bar{g_{i+1}}=\bar{g_{i,2}}$. Thus $\bar{g_{i+1}}=\bar{g_i}$.

\smallskip
\noindent
{\bf Case 2. Suppose $\bar{g_i}\neq \bar{g_{i,2}}$:} Then $\bar{g_i}> \bar{g_{i,2}}$. By Eq.~(\mref{eq:gsum}), we have $\bar{g_i}=u_{i,1}$ and then $\bar{g_{i,2}}<u_{i,1}$.
By the compatibility of $\phi$ with $\leq$ and the monomial property in Eq.~(\mref{eq:morder}), we have
$$u_{i,1}\geq u_{i,r_i}=q_i|_{\lc u_i\rc\lc v_i\rc}> q_i|_{\bar{\lc B(u_i,v_i)\rc}}\,.$$
There are two subcases to consider.
First assume that $u_{i,1}=u_{i,r_i}$.
Then $u_{i,1}=q_i|_{\lc u_i\rc \lc v_i\rc}$. By Eq.~(\mref{eq:gsum2}), we get
\begin{equation}
g_{i+1}=c_iq_i|_{\lc B(u_i,v_i)\rc }+(c_{i,2}u_{i,2}+\cdots+c_{i,n_i}u_{i,
n_i})+g_{i,2}.
\end{equation}
So $\bar{g_{i+1}}\leq \max\{q_i|_{\bar{\lc B(u_i,v_i)\rc}},\, u_{i,2},\, \bar{g_{i,2}}\}$. Since $q_i|_{\bar{\lc B(u_i,v_i)\rc}}<u_{i,1}, u_{i,2}<u_{i,1}$ and $\bar{g_{i,2}}<u_{i,1}$, we have $\bar{g_{i+1}}<u_{i,1}$. Hence $\bar{g_{i+1}}<\bar{g_i}$.
Next assume $u_{i,1}>u_{i,r_i}$. Then $2\leq r_i\leq n_i$. By Eq.~(\mref{eq:gsum2}), we can rewrite $g_{i+1}$ as
$$g_{i+1}=c_{i,1}u_{i,1}+c_{i,2}u_{i,2}
+\cdots+c_{i,r_i}q_i|_{\lc B(u_i,v_i)\rc}+\cdots+c_{i,n_i}u_{i,n_i}+
g_{i,2}.$$
Since $u_{i,1}>u_{i,r_i}>q_i|_{\bar{\lc B(u_i,v_i)\rc}}$, $u_{i,1}>\bar{g_{i,2}}$ and $u_{i,1}>u_{i,2}>\cdots>u_{i,n_i}$, we have $\bar{g_{i+1}}=u_{i,1}$. Thus $\bar{g_{i+1}}=\bar{g_i}$.

In summary, we have $\bar{g_{i+1}}\leq \bar{g_i}, 1\leq i\leq k-1$.
\end{proof}

Now we continue with the proof of Lemma~\mref{lem:roc2}. By $g_i\to_\phi g_{i+1}$ there exist $q_i\in\frakM^{\star}(Z), s_i:=\lc u_i\rc \lc v_i\rc-\lc B(u_i,v_i)\rc\in S$ and $0\neq c_i$ such that
$g_{i+1}=g_i-c_iq_i|_{s_i}$.
So we have $g_i=g_{i+1}+c_iq_i|_{s_i}$.
From the finite reduction sequence $g=:g_1\to_\phi g_2\to_\phi \cdots\to_\phi g_{k}:=0,$
we get
\begin{eqnarray*}
g_1&=&g_2+c_1q_1|_{s_1}\\
&=&g_3+c_2q_2|_{s_2}+c_1q_1|_{s_1}\\
&\cdots&\\
&=&g_{k}+c_{k-1}q_{k-1}|_{s_{k-1}}+\cdots+c_1q_1
|_{s_1}.\\
\end{eqnarray*}
Since $g_{k}=0$, we have
$$g_1=c_1q_1|_{s_1}+\cdots+c_{k-1}q_{k-1}|_{s_{k-1}}.$$
By Claim~\mref{cl:les}, we have
$$\bar{g_{k}}\leq \bar{g_{k-1}}\leq \cdots\leq \bar{g_1}.$$
Since $c_iq_i|_{s_i}=g_i-g_{i+1}$, we get $q_i|_{\bar{s_i}}\leq \max\{\bar{g_i},\bar{g_{i+1}}\}=\bar{g_i}\leq \bar{g_1}<w$ by our choice of $w$. This means that $g$ is trivial modulo $(S,w)$.
\end{proof}

Together with Corollary~\mref{co:rbcon}, the following theorem characterizes Rota-Baxter operators in terms of convergent rewriting systems and Gr\"obner-Shirshov bases.

\begin{theorem} Let $\phi(x,y):=\lc x\rc \lc y\rc -\lc B(x,y)\rc \in \bfk\mapm{x,y}$ with $B(x,y)$ in {\rm RBNF} and totally linear in $x, y$. Let $Z$ be a set, and let $\leq$ be a monomial order on $\frakM(Z)$ that is compatible with $\Pi_\phi(Z)$.
The following conditions are equivalent.
\begin{enumerate}
\item
The rewriting system $\Pi_\phi(Z)$ is convergent. \mlabel{it:mainc2}
\item
With respect to $\leq$, the set $S:=S_\phi(Z)$
is a Gr\"{o}bner-Shirshov basis in $\bfk\mapm{Z}$. \mlabel{it:maings}
\end{enumerate}
\mlabel{thm:gsrb}
\end{theorem}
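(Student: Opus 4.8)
The plan is to derive both implications from the Composition--Diamond Lemma (Theorem~\mref{thm:CDL}), using its equivalent condition (\textit{d}) --- the direct decomposition $\bfk\mapm{Z}=\bfk\cdot\Irr(S)\oplus\Id(S)$ --- and interfacing it with the rewriting system $\Pi_\phi$ through three elementary facts. Since $\leq$ is compatible with $\Pi_\phi$, each element of $S:=S_\phi(Z)$ may be taken in the form $\phi(u,v)=\lc u\rc\lc v\rc-\lc B(u,v)\rc$ with $u,v\in\frakM(Z)$ (total linearity of $B$ lets us pass to these monomial generators without changing the operated ideal $I_\phi(Z)$), so $\bar s=\lc u\rc\lc v\rc$; hence, by Remark~\mref{rmk:compat}, the $\phi$-normal forms are precisely the elements of $\bfk\cdot\Irr(S)$ and $\to_\phi$ coincides with $\to_S$. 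This is the first fact. The second is that every one-step reduction $p\to_\phi p'$ satisfies $p-p'=c\,q|_{s}$ for some $s\in S$ and $q\in\frakM^\star(Z)$, so $\astarrow_\phi$ preserves cosets modulo the operated ideal $\Id(S)$ described by Eq.~(\mref{eq:repgen}). The third is that for such an $s$ the one-step reduction $q|_{\bar s}\to_\phi q|_{\bar s-s}$ and Proposition~\mref{pp:red} (applicable since $\Pi_\phi$ is simple by Corollary~\mref{cor:1nonrec}) give $q|_{s}\astarrow_\phi 0$ for every $q\in\frakM^\star(Z)$.

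For (\mref{it:mainc2})$\Rightarrow$(\mref{it:maings}): assume $\Pi_\phi$ is convergent, hence terminating and confluent, and verify condition (\textit{d}) of Theorem~\mref{thm:CDL}. For spanning, given $f\in\bfk\mapm{Z}$, termination reduces $f\astarrow_\phi m$ to a normal form $m\in\bfk\cdot\Irr(S)$, while $f-m\in\Id(S)$ by the second fact; so $\bfk\mapm{Z}=\bfk\cdot\Irr(S)+\Id(S)$. For directness, let $m\in\bfk\cdot\Irr(S)\cap\Id(S)$ and write $m=\sum_i c_iq_i|_{s_i}$ via Eq.~(\mref{eq:repgen}); by the third fact each $c_iq_i|_{s_i}$ is joinable to $0$, so (using confluence and the chain of implications of Theorem~\mref{thm:downarrow} ending in ``a sum of terms joinable to $0$ reduces to $0$'') $m\astarrow_\phi 0$. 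But $m\in\bfk\cdot\Irr(S)$ is already in $\phi$-normal form, so $m=0$. Thus (\textit{d}) holds and $S$ is a Gr\"obner--Shirshov basis.

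For (\mref{it:maings})$\Rightarrow$(\mref{it:mainc2}): assume $S$ is a Gr\"obner--Shirshov basis. Since $\leq$ is compatible with $\Pi_\phi$, Theorem~\mref{thm:roc1} already gives that $\Pi_\phi$ is terminating, so by Definition~\mref{def:ARS} it remains to prove confluence. Let $f\astarrow_\phi g$ and $f\astarrow_\phi h$; by the second fact $g-h\in\Id(S)$, and reducing it (possible by termination) to a normal form $m$, the same fact keeps $m$ in $\Id(S)$ while the first fact puts $m$ in $\bfk\cdot\Irr(S)$. But Theorem~\mref{thm:CDL}, implication (\textit{a})$\Rightarrow$(\textit{d}), gives $\bfk\cdot\Irr(S)\cap\Id(S)=0$, so $m=0$, that is, $g-h\astarrow_\phi 0$; Proposition~\mref{pp:red} then yields $g\downarrow_\phi h$. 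Hence $\Pi_\phi$ is confluent, and therefore convergent.

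The content of the proof sits in the three preparatory facts rather than in the two implications themselves, which are short once those are in place; the monomial-order hypothesis is used precisely there --- via Remark~\mref{rmk:compat} for the identification of normal forms and via Theorem~\mref{thm:roc1} for termination. The point I expect to require the most care is the directness argument of the first implication: Theorem~\mref{thm:downarrow} transports joinability, not reducibility, across sums, so the contributions $c_iq_i|_{s_i}$ must be combined through its full chain (confluence $\Rightarrow$ transitivity of $\downarrow_\phi$ $\Rightarrow$ additivity $\Rightarrow$ reducibility to $0$ of a sum whose target is $0$) rather than by concatenating individual reductions.
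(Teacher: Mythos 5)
Your proof is correct, and it takes a genuinely different route from the paper's. The paper proves (\mref{it:mainc2})$\Rightarrow$(\mref{it:maings}) by checking Definition~\mref{de:GS} directly: the intersection compositions are shown joinable via Theorem~\mref{thm:downarrow} and converted into triviality modulo $(S,w)$ by Lemma~\mref{lem:roc2}, and the including compositions are expanded explicitly into combinations $\sum_i c_iq_i|_{s_i}$ with $q_i|_{\bar{s_i}}<w$; for (\mref{it:maings})$\Rightarrow$(\mref{it:mainc2}) it verifies $B(B(u,v),w)-B(u,B(v,w))\astarrow_\phi 0$ by a contradiction using condition~(\mref{it:cd2}) of Theorem~\mref{thm:CDL} and then invokes Corollary~\mref{co:rbcon}, hence the whole machinery of Theorem~\mref{thm:rbr} with its separated/nested/intersecting case analysis. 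You instead run both directions through condition~(\mref{it:cd4}) of the Composition--Diamond Lemma: convergence yields $\bfk\mapm{Z}=\bfk\cdot\Irr(S)\oplus\Id(S)$ (termination produces normal forms, the identity $p-p'=c\,q|_{s}$ preserves cosets modulo $\Id(S)$, and Theorem~\mref{thm:downarrow}(\mref{item:add0transpose}) annihilates an irreducible element of $\Id(S)$ presented as in Eq.~(\mref{eq:repgen})), while conversely $\bfk\cdot\Irr(S)\cap\Id(S)=0$ forces the normal form of $g-h$ to vanish for every fork, with Proposition~\mref{pp:red} then supplying joinability. Your route is shorter and never touches compositions or the placement trichotomy---that combinatorial content is absorbed into the cited CD Lemma---and your second implication bypasses Theorem~\mref{thm:rbr}, whose statement (and hence the paper's appeal to Corollary~\mref{co:rbcon} at this point) carries a field hypothesis on $\bfk$ that Theorem~\mref{thm:gsrb} does not impose and that you do not need. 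The paper's route, in exchange, produces the explicit composition computations and keeps the theorem wired to the associativity condition (\mref{it:rb3}) of Definition~\mref{de:rbtype}, the characterization exploited in Sections~\mref{sec:rbcr} and~\mref{sec:evi}. Two details you rely on and should keep explicit in a write-up: compatibility of $\leq$ with $\Pi_\phi$ is exactly what makes every $s\in S_\phi(Z)$ monic with $\bar{s}=\lc u\rc\lc v\rc$, so that the CD Lemma applies; and the scalar in each term $c_iq_i|_{s_i}$ is harmless, since either $c_i=0$ or the single rewriting step at $q_i|_{\bar{s_i}}$ already sends $c_iq_i|_{s_i}$ to $0$.
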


\begin{proof}
\noindent (\mref{it:mainc2}) $\Longrightarrow$ (\mref{it:maings}) Suppose that the rewriting system $\Pi_\phi$ is convergent.

Let two elements $f$ and $g$ of $S_\phi(Z)$ be given with $f\neq g$. They are of the form
$$f:=\phi(u,v), \quad g:=\phi(r,s), \quad u,v,r,s\in \frakM(Z).$$
Since $\Pi_\phi$ is compatible with $\leq$, we have $\bar{f}=\lc u\rc \lc v\rc $ and $\bar{g}=\lc r\rc \lc s\rc $.
\smallskip

\noindent {\bf (The case of intersection compositions). }  Suppose that $w=\bar{f}\mu=\nu\bar{g}$ gives an intersection composition, where $\mu,\nu\in \frakM(Z)$. Since $\bre{\bar{f}}=\bre{\bar{g}}=2$, we must have $\bre{w}<\bre{\bar{f}}+\bre{\bar{g}}=4$. Thus $\bre{w}=3$. This means that $\bre{\mu}=\bre{\nu}=1$. Since $f, g$ are monic, we have $\mu=\lc s\rc , \nu=\lc u\rc .$ Thus $w=(\lc u\rc \lc v\rc )\lc s\rc =\lc u\rc (\lc r\rc \lc s\rc ).$  Then we have $v=r$ and
$$\begin{array}{llllll}
(f,g)_w^{\mu,\nu}&=&f\mu-\nu g\\
&=&(\lc u\rc \lc v\rc -\lc B(u,v)\rc)\mu-\nu (\lc r\rc \lc s\rc -
\lc B(r,s)\rc)\\
&=&-(\lc B(u,v)\rc \lc s\rc-\lc u\rc \lc B(v,s)\rc).\\
\end{array}$$
Since $$\lc B(u,v)\rc \lc s\rc_{\phi}\!\!\!\leftarrow\lc u\rc \lc v\rc \lc s\rc \rightarrow_{\phi}\lc u\rc \lc B(v,s)\rc  $$ and $\Pi_\phi$ is convergent, by Theorem~\mref{thm:downarrow},  we have
$$\lc B(u,v)\rc \lc s\rc-\lc u\rc \lc B(v,s)\rc\astarrow_\phi 0$$
Since
$$\overline{\lc B(u,v)\rc \lc s\rc-\lc u\rc \lc B(v,s)\rc} \leq
\max\Big\{~\overline{\lc B(u,v)\rc \lc s\rc}, ~\overline{\lc u\rc \lc B(v,s)\rc}~\Big\}<\lc u\rc\lc v\rc \lc s\rc.$$
By Lemma~\mref{lem:roc2}, $\lc B(u,v)\rc \lc s\rc-\lc u\rc \lc B(v,s)\rc$ is trivial modulo $(S,\lc u\rc \lc v\rc \lc s\rc).$

\smallskip

\noindent {\bf (The case of including compositions).} Suppose that $w:=\overline{f}=q|_{\overline{g}}$. Then we have $w:=\lc u\rc \lc v\rc =q|_{\lc r\rc \lc s\rc }$. If $q=\star$, then $\lc u\rc \lc v\rc=\lc r\rc \lc s\rc$. Thus $u=r$ and $v=s$. Hence $f=\phi(u,v)=g$, a contradiction to the hypothesis that $f\neq g$. Then we get $q\neq \star$.  So the $\star$ in $q$ can come from either $u$ or $v$. Thus $f$ and $g$ could only have the following including compositions.
\begin{enumerate}
\item
If $u=q'|_{\lc r\rc \lc s\rc }$ for some $q'\in \frakM^{\star}(Z)$, then
$$\overline{f}=q|_{\overline{g}}=\lc q'|_{\lc r\rc \lc s\rc }\rc\,
\lc v\rc $$ with $q:=\lc q'\rc\,\lc v\rc $.
\item
If $v=q'|_{\lc r\rc \lc s\rc }$ for some $q'\in \frakM^{\star}(Z)$, then
$$\overline{f}=q|_{\overline{g}}=\lc u\rc \lc q'|_{\lc r\rc \lc s\rc
}\rc$$ with $q:=\lc u\rc \lc q'\rc$.
\end{enumerate}

So we just need to check that in both cases these compositions are trivial modulo $(S,w)$. Consider the first case. Using the notation in Eq.~(\mref{eq:phimon}), this composition is

\allowdisplaybreaks{
\begin{eqnarray*}
(f,g)_w^q&=& f- q|_g \\
&=& \lc u\rc \lc v\rc -\sum_{i=1}^ka_i\lc B_i(u,v)\rc
- \lc q'|_{g}\rc\,\lc v\rc \\
&=& \lc q'|_{\lc r\rc \lc s\rc }\rc\,\lc v\rc  -\sum_{i=1}^k a_i
\lc B_i(q'|_{\lc r\rc \lc s\rc },v)\rc\\
&&-\left( \lc q'|_{\lc r\rc \lc s\rc }\rc\,\lc v\rc  -\sum_{i=1}^k a_i
\lc q'|_{\lc B_i(r,s)\rc}\rc\,\lc v\rc \right)\\
&=&-\sum_{i=1}^k a_i \lc B_i(q'|_{\lc r\rc \lc s\rc },v)\rc +\sum_{i=1}^k a_i
\lc q'|_{\lc B_i(r,s)\rc}\rc\,\lc v\rc \\
&=& -\sum_{i=1}^k a_i \lc B_i(q'|_{\phi(r,s)},v)\rc-\sum_{i=1}^k
a_i \lc B_i(q'|_{\lc B(r,s)\rc},v)\rc\\
&&+\sum_{i=1}^k a_i\phi(q'|_{\lc B_i(r,s)\rc}, v) +\sum_{i=1}^k a_i
\lc B(q'|_{\lc B_i(r,s)\rc}, v)\rc\\
&=& -\sum_{i=1}^k a_i \lc B_i(q'|_{\phi(r,s)},v)\rc+\sum_{i=1}^k a_i
\phi(q'|_{\lc B_i(r,s)\rc}, v)\\
&& -\sum_{i=1}^k a_i  \sum_{j=1}^k a_j\lc B_i(q'|_{\lc B_j(r,s)\rc},v)\rc +\sum_{i=1}^k a_i \sum_{j=1}^k a_j \lc B_j(q'|_{\lc
B_i(r,s)\rc}, v)\rc \\
&=&-\sum_{i=1}^k a_i \lc B_i(q'|_{\phi(r,s)},v)\rc+\sum_{i=1}^k a_i \phi(q'|_{\lc B_i(r,s)\rc}, v)
\end{eqnarray*}}
since the double sums become the same after exchanging $i$ and $j$. Let $q_i=\lc B_i( q', v)\rc\in \frakM^{\star}(Z)$, $1\leq i\leq k$.   Then
$$\sum_{i=1}^k a_i \lc B_i(q'|_{\phi(r,s)},v)\rc=\sum_{i=1}^k
a_i q_i|_{\phi(r,s)}.$$
Further
$$ q_i|_{\overline{\phi(r,s)}}=q_i|_{\lc r\rc\lc s\rc}=\lc B_i(q'|_{\lc r\rc\lc s\rc},v)\rc
=\lc B_i(u,v)\rc < \lc u\rc \lc v\rc =w.$$
Thus the first sum is trivial modulo $(S,w)$.

For the second sum, we have
$$\sum_{i=1}^k a_i
\phi(q'|_{\lc B_i(r,s)\rc}, v)=\sum_{i=1}^k a_i q_i|_{u_i},$$ where $q_i = \star$ and $u_i:=\phi(q'|_{\lc B_i(r,s)\rc}, v) \in S$. Further,
$${q_i|_{\overline{u_i}}}=\overline{u_i}=
\overline{\phi(q'|_{\lc B_i(r,s)\rc}, v)}= \lc{q'|_{\lc B_i(r,s)\rc}}\rc\lc v\rc<\lc q'|_{\lc r\rc \lc s\rc}\rc\lc v\rc=w.$$
Hence the second sum is also trivial modulo $(S,w)$. This proves $(f,g)_w^q$ is trivial modulo $ (S,w)$. The proof of the second case is similar.

\smallskip

\noindent (\mref{it:maings}) $\Longrightarrow$ (\mref{it:mainr})
By Theorem~\mref{thm:roc1}, we conclude that $\Pi_\phi$ is terminating.
By Corollary ~\mref{co:rbcon}, it remains to verify that $B(B(u,v),w)-B(u,B(v,w)) \astarrow_\phi 0$  for $u,v,w\in \frakM(Z)$. We prove this by contradiction. Suppose that there are $u, v, w\in \frakM(Z)$ such that $B(B(u,v),w)-B(u,B(v,w))$ is not $\phi$-reducible to zero. Then we have $B(B(u,v),w)-B(u,B(v,w))\rightarrow_\phi \cdots \rightarrow_\phi  G$ where $G\neq 0$ is in RBNF. Thus $\lc B(B(u,v),w)\rc-\lc B(u,B(v,w))\rc \rightarrow_\phi \cdots \rightarrow_\phi  \lc G\rc$. Note that
\begin{eqnarray*}
&&\lc B(B(u,v),w)\rc -\lc B(u,B(v,w))\rc\\
&=& \lc B(B(u,v),w)\rc - \lc B(u,v)\rc\lc w\rc +\lc B(u,v)\rc \lc
w\rc - \lc u\rc \lc v\rc \lc w\rc \\
&& - \lc B(u,B(v,w))\rc + \lc u\rc \lc B(v,w)\rc - \lc u\rc \lc B(v,w)\rc + \lc u\rc \lc v\rc \lc w\rc
\end{eqnarray*}
is in $\id(S)$. Hence $\lc G\rc$ is in $\Id(S)$. Since $S$ is a Gr\"obner-Shirshov basis, by Theorem~\mref{thm:CDL}, there are $q\in \frakM(Z)$ and $s\in S$ such that $\overline{\lc G\rc}=q|_{\bar{s}}$. This shows that $\lc G\rc $ is not in RBNF. Hence $G$ is not in RBNF, a contradiction. In summary, $\phi$ is of Rota-Baxter type. Thus by Corollary ~\mref{co:rbcon},
(\mref{it:mainc2}) follows.
\smallskip

This completes the proof of Theorem~\mref{thm:gsrb}.
\end{proof}

\subsection{Construction of free $\phi$-algebra}
\mlabel{ss:constru}
We next give the following explicit construction of free objects in the category of algebras with a given Rota-Baxter type operator. As we will see in Theorem~\mref{thm:exam}, this construction applies to all the operators in the list of Conjecture~\mref{con:rbt} and thus provides a uniform exposition compared with the previously separate case-by-case construction method~\mcite{Agg,EG1,Gub,LG,ZhC,ZG}.

Recall from Proposition~\mref{pp:frpio} that $\bfk_\phi\mapm{Z} = \bfk\mapm{Z}/I_\phi(Z)$ is the free $\phi$-algebra on $Z$. Let $\rbw(Z)$ be the set of bracketed words in $\frakM(Z)$ in RBNF. Then $\rbw(Z)$ is closed under the operator $P_r:=\lc\,\rc$. Let $\kdot\rbw(Z)$ be the free $\bfk$-module with basis $\rbw(Z)$ and let the operator  $P_r$ on $\rbw(Z)$ be extended $\bfk$-linearly to $\kdot\rbw(Z)$. Then $(\kdot\rbw(Z), P_r)$ is an operated $\bfk$-module as defined in Definition~\mref{de:mapset}.

\begin{theorem} Let $\phi(x,y):=\lc x\rc \lc y\rc -\lc B(x,y)\rc  \in
\bfk\mapm{x,y}$ be of Rota-Baxter type. Let $Z$ be a set and let $\leq$ be a monomial order on $\frakM(Z)$. Suppose that $\phi$ is compatible with $\leq$. Then:
\begin{enumerate}
\item
The composition of natural  $\bfk$-module morphisms
$$ \kdot\rbw(Z) \overset{\iota}{\to} \kdot\frakM(Z) \equiv \bfk\mapm{Z}  \overset{\eta}{\to} \bfk_\phi\mapm{Z}$$
is an   isomorphism and $\eta \circ \iota(\rbw(Z))$ is a $\bfk$-basis of $\bfk_\phi\mapm{Z}$.
 \mlabel{it:freea}
\item
Let $\alpha:\bfk_\phi\mapm{Z}\to \kdot\rbw(Z)$ be the inverse of $\eta\circ \iota$ from {\rm Part}~{\rm (\mref{it:freea})} and let \Reduce\ be the composition $\bfk\mapm{Z} \overset{\eta}{\rightarrow} \bfk_\phi\mapm{Z} \overset{\alpha}{\rightarrow} \kdot\rbw(Z)$.   Then $(\kdot\rbw(Z), \diamondsuit, P_r)$ is a free $\phi$-algebra, where the multiplication $\diamondsuit$ on $\kdot\rbw(Z)$ is defined on $\rbw(Z)$ as follows and extended by bilinearity. For any bracketed words $u, v \in \rbw(Z):$
\begin{enumerate}
\item
$1 \diamondsuit u =u \diamondsuit 1: = u$, where $1$ is the empty word in $\frakM(Z);$\mlabel{it:diapr0}
\item
$u \diamondsuit v: = uv$ if either $u \in S(Z)$ or $v \in S(Z);$ \mlabel{it:diapr1}
\item
$u \diamondsuit v: =
\lc \Reduce(B(\mpu,\mpv))\rc$ if $u=\lc \mpu\rc$ and $v=\lc \mpv\rc$ are both in $\lc \rbw(Z)\rc;$
\mlabel{it:diapr2}
\item
$u\diamondsuit
v:=u_1\cdots u_{s-1}(u_s \diamondsuit v_1)v_2\cdots v_t$ if the standard decomposition
$u_1\cdots u_{s-1}u_s$ of $u$ has $s > 1$ or the standard decomposition $ v_1v_2\cdots v_t$ of $v$ has $t > 1$.
Here except for $u_s \diamondsuit
v_1$, the rest are concatenations as in the standard decompositions defined in {\rm Eq.~(\mref{eq:stde})}.
\mlabel{it:diapr3}
\end{enumerate}
\mlabel{it:freeb}
\end{enumerate}
\mlabel{thm:free}
\end{theorem}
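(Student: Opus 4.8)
The plan is to derive everything from the Composition--Diamond Lemma applied to $S:=S_\phi(Z)$, and then to recognize $(\kdot\rbw(Z),\diamondsuit,P_r)$ as a transported copy of the free $\phi$-algebra $\bfk_\phi\mapm{Z}$. For Part~(\mref{it:freea}): since $\phi$ is of Rota--Baxter type and is compatible with $\leq$, Corollary~\mref{co:rbcon} gives that $\Pi_\phi(Z)$ is convergent, so by Theorem~\mref{thm:gsrb} the set $S$ is a Gr\"obner--Shirshov basis in $\bfk\mapm{Z}$ with respect to $\leq$. Then the Composition--Diamond Lemma (Theorem~\mref{thm:CDL}, (\mref{it:cd1}) $\Rightarrow$ (\mref{it:cd4})) yields $\bfk\mapm{Z}=\bfk\cdot\Irr(S)\oplus\Id(S)$ with $\Irr(S)$ a $\bfk$-basis of $\bfk\mapm{Z}/\Id(S)$. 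Now $\Id(S)=I_\phi(Z)$, so $\bfk\mapm{Z}/\Id(S)=\bfk_\phi\mapm{Z}$, which is the free $\phi$-algebra on $Z$ by Proposition~\mref{pp:frpio}, and $\Irr(S)=\rbw(Z)$ by Remark~\mref{rmk:compat}. Under these identifications $\iota$ is the inclusion of the direct summand $\kdot\rbw(Z)=\bfk\cdot\Irr(S)$ and $\eta$ is the quotient map, so $\eta\circ\iota$ is a $\bfk$-module isomorphism carrying the basis $\rbw(Z)$ to a $\bfk$-basis of $\bfk_\phi\mapm{Z}$.

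For Part~(\mref{it:freeb}) I would transport the operated $\phi$-algebra structure of $\bfk_\phi\mapm{Z}$ to $\kdot\rbw(Z)$ along $\alpha=(\eta\circ\iota)^{-1}$; since $\bfk_\phi\mapm{Z}$ is free on $Z$ and $\alpha$ sends the class of $z$ to $\Reduce(z)=z$, the transported structure is automatically a free $\phi$-algebra on $Z$ with the evident embedding $z\mapsto z$. Because $\eta$ is an algebra morphism, the transported product is $(f,g)\mapsto\alpha(\eta(f)\eta(g))=\alpha(\eta(fg))=\Reduce(fg)$, i.e.\ concatenate and reduce to $\phi$-normal form; and because $\rbw(Z)$ is closed under $\lc\ \rc$ we have $\iota\circ P_r=P\circ\iota$, while $\eta\circ P=P\circ\eta$, so $\eta\circ\iota$ (hence $\alpha$) is a morphism of operated $\bfk$-modules and the transported operator is exactly $P_r$. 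It then remains to prove $\Reduce(uv)=u\diamondsuit v$ for all $u,v\in\rbw(Z)$, which by bilinearity gives that the transported product is $\diamondsuit$.

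The working tool here is that, $\Pi_\phi(Z)$ being convergent, every bracketed polynomial has a \emph{unique} normal form, equal to its image under $\Reduce$; so for each clause it suffices to exhibit the prescribed right-hand side as an element of $\kdot\rbw(Z)$ reachable from $uv$ by $\astarrow_\phi$. Writing $u=u_1\cdots u_s$ and $v=v_1\cdots v_t$ in the standard form (\mref{eq:stde}) (or $u=1$, resp.\ $v=1$), the decisive observation is that the only possible reducible subword of the concatenation $uv$ is the junction pair $u_sv_1$, and that only when both $u_s$ and $v_1$ are bracketed factors---this uses that $u$ and $v$ are in RBNF and that the standard decomposition alternates between $S(Z)$ and $\lc\rbw(Z)\rc$. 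Clauses (\mref{it:diapr0}) and (\mref{it:diapr1}) then hold because $uv$ is already in RBNF; clause (\mref{it:diapr2}) follows by the single rewrite $\lc \mpu\rc\lc \mpv\rc\to_\phi\lc B(\mpu,\mpv)\rc$ followed by reducing inside the outer bracket, using that reductions confined inside a bracket neither escape it nor are disturbed by the surrounding word (immediate from the shape of $\Pi_\phi$); and clause (\mref{it:diapr3}) combines these---if $u_s$ or $v_1$ is an $S(Z)$-word then $uv$ is already in RBNF and the statement reduces to clause (\mref{it:diapr1}) at the junction, while if both are brackets then rewriting the junction and reducing inside its bracket turns $uv$ into $u_1\cdots u_{s-1}\bigl(u_s\diamondsuit v_1\bigr)v_2\cdots v_t$, which lies in $\kdot\rbw(Z)$ since the flanking factors $u_{s-1},v_2$ are $S(Z)$-words (by alternation) and so create no new reducible junction.

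The hard part will be clause (\mref{it:diapr3}): pinning down exactly which subwords of a concatenation of two standard-form RBNF monomials are reducible, and then verifying that after the one admissible rewrite and the ensuing reductions strictly inside the new bracket no fresh reducible subword is introduced, so that the displayed expression genuinely is the $\phi$-normal form of $uv$. Everything else is routine bookkeeping with the standard decomposition (\mref{eq:stde}), or a formal consequence of transporting structure along the isomorphism established in Part~(\mref{it:freea}).
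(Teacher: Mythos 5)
Your proposal is correct and takes essentially the same route as the paper: Part (1) via Corollary~\mref{co:rbcon}, Theorem~\mref{thm:gsrb} and the Composition--Diamond Lemma (Theorem~\mref{thm:CDL}), and Part (2) by transporting the $\phi$-algebra structure of $\bfk_\phi\mapm{Z}$ along $\alpha$ and then checking $\Reduce(uv)=u\diamondsuit v$ clause by clause from the standard decomposition, with the key observation that the only $\phi$-reducible occurrence in $uv$ is the junction $u_sv_1$ when both factors are bracketed. The one step you leave implicit---that the unique $\phi$-normal form of $f$ coincides with $\Reduce(f)$---is precisely what the paper records as Lemma~\mref{lem:redur}(\mref{it:redeq})--(\mref{it:redrb}), and it follows readily from the direct-sum decomposition obtained in Part (1).
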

\begin{proof}
(\mref{it:freea}) By Corollary ~\mref{co:rbcon} and Theorem~\mref{thm:gsrb}, $S=S_\phi(Z)$ is a Gr\"obner-Shirshov basis in $\bfk\mapm{Z}$ with respect to $\leq$. Hence by Theorem~\mref{thm:CDL}, $\Irr(S)$ is a $\bfk$-basis of $\bfk\mapm{Z}/\Id(S)$. Since $\Irr(S)=\rbw(Z)$ and $\Id(S)=I_\phi(Z)$ from their definitions, Part~(\mref{it:freea}) follows.
\smallskip

Before proving Theorem~\mref{thm:free} (\mref{it:freeb}), we give the following lemma.

\begin{lemma}
We keep the same notations as in Theorem~\mref{thm:free}~$($\mref{it:freeb}$)$.
\begin{enumerate}
\item The linear maps $\iota, \eta, \alpha$ and $\Reduce$ are operated $\bfk$-module morphisms.
\mlabel{it:mor}
\item For any $f,g\in\bfk\mapm{Z}$ such that $f\to_\phi g$, we have $\Reduce(f)=\Reduce(g)$;
\mlabel{it:redeq}
\item
For any $f\in\bfk\mapm{Z}$, there exists $f'\in\bfk\mapm{Z}$ in RBNF such that $f\astarrow_\phi f'$, and $\Reduce(f)=f'$.
\mlabel{it:redrb}
\end{enumerate}
\mlabel{lem:redur}
\end{lemma}
\begin{proof}
(\mref{it:mor})
By definition, $\rbw(Z)$ is closed under taking the bracket. So $\kdot\rbw(Z)$ is an operated $\bfk$-module. Then the embedding $\iota: \kdot\rbw(Z) \to \bfk\mapm{Z}$ is an operated $\bfk$-module morphism. Since $\eta: \bfk\mapm{Z} \to \bfk_\phi\mapm{Z}$ is a quotient map of operated $\bfk$-modules and hence an operated $\bfk$-module morphism, the composition $\eta\circ \iota$ is also one. Since $\eta\circ \iota$ is a linear bijection by Theorem~\mref{thm:free}(\mref{it:freea}), its inverse $\alpha$ is also an operated $\bfk$-module morphism. Then the composition $\Reduce:=\alpha\circ \eta$ is also one.

\smallskip

\noindent
(\mref{it:redeq})
By $f\to_\phi g$ and Definition~\mref{def:redsys}, there exist $q\in\frakM^{\star}(Z)$, $u,v\in\frakM(Z)$ and $0\neq c\in \bfk$ such that $$g=f-cq|_{(\lc u\rc \lc v\rc-\lc B(u,v)\rc)}.$$
Then $f-g=cq|_{(\lc u\rc\lc v\rc-\lc B(u,v)\rc)}$ is in $\Id(S)$.
So $f-g+\Id(S)=0+\Id(S)$ is in $\bfk_\phi\mapm{Z}$.
Then by Lemma~\mref{lem:redur}.(\mref{it:mor}), we have
$$\Reduce(f)-\Reduce(g)=\Reduce(f-g)
=\alpha(\eta(f-g))
=\alpha(f-g+\Id(S))=0.
$$

\smallskip

\noindent
(\mref{it:redrb})
By Corollary ~\mref{co:rbcon}, $\Pi_\phi$ is convergent. Hence $\Pi_\phi$ is terminating. Then there exists $f'\in\bfk\mapm{Z}$ in RBNF such that $f\astarrow_\phi f'$.
We can assume that the reduction relation $f\astarrow_\phi f'$ is given by the finite reduction sequence $f\to_\phi f_1\to_\phi f_2\to_\phi\cdots\to_\phi f'.$  By Lemma~\mref{lem:redur} ~(\mref{it:redeq}), we have
$$ \Reduce(f)=\Reduce(f_1)=\Reduce(f_2)
=\cdots=\Reduce(f').$$
Since $f'$ is in RBNF, we have $\eta(f')=(\eta\circ\iota)(f')
=\alpha^{-1}(f')$. So
$$\Reduce(f')=(\alpha\circ \eta)(f')
=\alpha(\alpha^{-1}(f'))=f'.$$
\end{proof}

\noindent (\mref{it:freeb})
We now prove Theorem~\mref{thm:free}(\mref{it:freeb}).
Since $\alpha:\bfk_\phi\mapm{Z}\to \bfk\cdot \rbw(Z)$ is an operated $\bfk$-module isomorphism by Theorem~\mref{thm:free}(\mref{it:freea}) and Lemma~\mref{lem:redur}(\mref{it:mor}), we can transport the structure of a free $\phi$-algebra on $\bfk_\phi\mapm{Z}$ to $\kdot\rbw(Z)$. More precisely, denote the multiplication and the linear operator on the free $\phi$-algebra $\bfk_\phi\mapm{Z}$ by $\odot$ and $P_Z:=\lc\,\rc \,(\!\!\!\mod \Id(S))$ respectively. We define
$$u\,\diamondsuit'v:=\alpha(\alpha^{-1}(u)
\odot\alpha^{-1}(v))\quad
\text{and} \quad P'_r(u):=\alpha(P_Z(\alpha^{-1}(u)))\quad
\text{for any $u,v\in\rbw(Z)$}.$$
Then $(\kdot\rbw(Z),\diamondsuit',P_r')$
is a $\phi$-algebra isomorphic to $(\bfk_\phi\mapm{Z},\odot,P_Z)$, and hence is a free $\phi$-algebra on $Z$.

Since $u\in\rbw(Z)$ and $\alpha^{-1}(u)=u +\Id(S)$, we have $$P_r'(u)=\alpha(P_Z(\alpha^{-1}(u)))
=\alpha(\lc u\rc+\Id(S))=\lc u\rc=P_r(u).$$
Thus, we get $P_r'=P_r=\lc\,\rc$.

It remains to prove that $u\diamondsuit'v=u\diamondsuit v$ for any $u,v\in\rbw(Z)$. Note that $\alpha^{-1}(u)=u+\Id(S)=\eta(u)$ and $\alpha^{-1}(v)=v+\Id(S)=\eta(v)$.
By Lemma~\mref{lem:redur}(\mref{it:mor}), $\eta:\bfk\mapm{Z} \rightarrow \bfk_\phi\mapm{Z}$ is an operated $\bfk$-algebra homomorphism. Hence we have
$$\alpha^{-1}(u)\odot \alpha^{-1}(v)
=\eta(u)\odot\eta(v)=\eta(uv).$$
Thus $$u\diamondsuit'v=
\alpha(\alpha^{-1}(u)\odot \alpha^{-1}(v))=\alpha(\eta(uv))= \Reduce(uv).$$
So we just need to show that $\Reduce(uv)=u\diamondsuit v$. For any given $u,v\in\rbw(Z)$,
let $u=u_1\cdots u_{s-1}u_s$ and $v= v_1v_2\cdots v_t$ be the standard decompositions defined in {\rm Eq.~(\mref{eq:stde})}. Then $u_i$ is alternately in $ S(Z)$ or in $\lc \rbw(Z)\rc, 1\leq i\leq s$, and $v_j$ is also alternately in $ S(Z)$ or in $ \lc \rbw(Z)\rc, 1\leq j\leq t$.
First consider $s=t=1$.
If $v=1$ is the empty word in $\frakM(Z)$, then $$\Reduce(u1)=\Reduce(u)
=u.$$
Thus, we have $\Reduce(u1)=u=u\diamondsuit 1$. Similarly, $\Reduce(1u)= u=1\diamondsuit u$.
If $u\in S(Z)$ or $v\in S(Z)$, then $uv\in\rbw(Z)$. Thus we have $\Reduce(uv)=uv=u\diamondsuit v$.
If $u=\lc u^\ast\rc $ and $v=\lc v^\ast \rc$ with $u^\ast,v^\ast\in\rbw(Z)$,
then we have $uv=\lc u^\ast\rc\lc v^\ast\rc\to_\phi \lc B(u^\ast,v^\ast)\rc$. By Lemma~\mref{lem:redur}(\mref{it:redeq}), we get $\Reduce(uv)=\Reduce(\lc B(u^\ast,v^\ast)\rc)$.
By Lemma~\mref{lem:redur}(\mref{it:mor}), $\Reduce$ preserves the brackets. Then we get
\begin{equation}
\Reduce(\lc u^\ast \rc \lc v^\ast\rc)=\Reduce(\lc B(u^\ast,v^\ast)\rc)=\lc \Reduce(B(u^\ast,v^\ast))\rc=:u\diamondsuit v.
\mlabel{eq:redu}
\end{equation}
Now we consider $s>1$ or $t>1$.
If $u_s\in S(Z)$ or $v_1\in S(Z)$, then $u_sv_1$ is in $\rbw(Z)$. Thus, $uv=u_1\cdots u_{s-1}u_sv_1v_2\cdots v_t$ is in $ \rbw(Z)$. Then we have $$\Reduce(uv)=uv=u_1\cdots u_{s-1} (u_s\diamondsuit v_1)v_2\cdots v_t= u\diamondsuit v.$$
If $u_s=\lc u_s^\ast \rc$ and $v_1=\lc v_1^\ast\rc$ with $u_s^\ast,v_1^\ast\in\rbw(Z)$, then $u_{s-1}$ and $v_2$, if exist, are in $S(Z)$.
Since $uv=u_1\cdots u_{s-1} (\lc u_s^\ast\rc \lc  v_1^\ast\rc)v_2\cdots v_t$ while $u_1\cdots u_{s-1}$ and $v_2\cdots v_t$ are in $\rbw(Z)$, the rewriting system $\Pi_\phi$ can only be applied to $\lc u_s^\ast\rc\lc v_1^\ast\rc$. Since $\Pi_\phi$ is terminating, there exists $ h\in\bfk\mapm{Z}$  in RBNF such that
\begin{equation}
\lc u_s^\ast\rc\lc v_1^\ast\rc\to_\phi \lc B(u_s^\ast,v_1^\ast)\rc \astarrow_\phi \lc h\rc.
\mlabel{eq:buv}
\end{equation}
Then we have
\begin{eqnarray*}
uv&=&u_1\cdots u_{s-1} (\lc u_s^\ast\rc \lc  v_1^\ast\rc)v_2\cdots v_t\\
&\to_\phi& u_1\cdots u_{s-1} (\lc B( u_s^\ast,  v_1^\ast)\rc)v_2\cdots v_t\\
&\astarrow_\phi&
u_1\cdots u_{s-1} \lc h\rc v_2\cdots v_t.
\end{eqnarray*}
Since $\lc h\rc$ is in RBNF and $u_{s-1}$ and $v_2$ are in $S(Z)$, $u_1\cdots u_{s-1} \lc h\rc v_2\cdots v_t$ is in $\kdot\rbw(Z)$. Then by Lemma~\mref{lem:redur}(\mref{it:redrb}), we have
$$\Reduce(uv)=u_1\cdots u_{s-1} \lc h\rc v_2\cdots v_t.$$
By Eq.(\mref{eq:buv}) and Lemma~\mref{lem:redur}.(\mref{it:redeq}), we have $$\Reduce(\lc u_s^\ast\rc\lc v_1^\ast\rc)=\Reduce(\lc B( u_s^\ast, v_1^\ast)\rc)=\lc h\rc.$$
Since $\Reduce=\alpha\circ \eta$ is an operated $\bfk$-module morphism, $\Reduce(\lc B( u_s^\ast, v_1^\ast)\rc)=\lc\Reduce( B( u_s^\ast, v_1^\ast))\rc$. So $\lc h\rc=\lc\Reduce( B( u_s^\ast, v_1^\ast))\rc$.
Thus, we get
\begin{eqnarray*}
\Reduce(uv)&=&\Reduce(u_1\cdots u_{s-1}\lc u_s^\ast\rc\lc v_1^\ast\rc v_2\cdots v_t)\\
&=&u_1\cdots u_{s-1}\lc h\rc v_2\cdots v_t\\
&=&u_1\cdots u_{s-1}\lc\Reduce
(B(u_s^\ast,v_1^\ast))\rc v_2\cdots v_t\\
&=&u_1\cdots u_{s-1}(u_s\diamondsuit v_1)v_2\cdots v_t \qquad (\text{ by Eq.~(\mref{eq:redu}))}\\
&=:&u\diamondsuit v.
\end{eqnarray*}
Hence, $u\diamondsuit'v=\Reduce(uv)=u\diamondsuit v$ for any $u,v\in\rbw(Z)$. Then $(\kdot\rbw(Z), \diamondsuit, P_r)=(\kdot\rbw(Z),\diamondsuit',P_r')$ and hence is a free $\phi$-algebra.
\end{proof}

\section{Applications to Conjecture~\mref{con:rbt}}
\mlabel{sec:evi} We next construct a monomial order on $\frakM(Z)$ that is compatible with the linear operators in Conjecture~\mref{con:rbt}. This allows us to show that these operators are indeed Rota-Baxter type operators as claimed by the conjecture. At the same time this gives, in one stroke, an explicit construction of free objects in the categories of algebras with any of these operators. In the case of the Rota-Baxter operator, Nijenhuis operator or TD operator, such a construction was obtained previously by different methods~\mcite{EG1,Gub,LG,ZhC,ZG}. See~\mcite{BCQ} for the construction of free Rota-Baxter algebras by the method of Gr\"obner-Shirshov basis.

\subsection{Monomial order on $\frakM(Z)$}
\mlabel{ss:mon}

We now construct a monomial order on $\frakM(Z)$.

Let $Z$ be a set with a well order $\leq_Z$. For $u=u_1\cdots u_r\in M(Z)$ with $u_1,\cdots,u_r\in Z$, define $\deg_Z(u)=r$.  Note that $\deg_Z(1)=0$. Define the {\bf degree lexicographical order} $\leq_{\dl}$  on $M(Z)$ by taking, for  any $u=u_1\cdots u_r,v=v_1\cdots v_s \in M(Z)\backslash\{1\}$, where $u_1,\cdots,u_r,v_1,\cdots, v_s\in Z$,
$$ u\leq_{\dl}v\Leftrightarrow \left\{ \begin{array}{l}
\deg_Z(u)<\deg_Z(v),\\
\text{or}~ \deg_Z(u)=\deg_Z(v)(=r) ~ \text{and} ~ u_1\cdots u_r\leq_{lex} v_1\cdots v_r,\end{array}\right.$$
where $\leq_{lex}$ is the lexicographical order on $M(Z)$, with the convention that the empty word $1\leq_{\dl} u$ for all $u\in M(Z)$.
Then we have
\begin{lemma} \mcite{BN}
If $\leq_Z$  is a well order on $Z$, then $\leq_{\dl}$ is a well
order on
 $M(Z)$.\mlabel{le:dlexord}
\end{lemma}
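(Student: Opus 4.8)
The claim to prove is Lemma~\ref{le:dlexord}: if $\leq_Z$ is a well order on $Z$, then the degree-lexicographic order $\leq_{\dl}$ is a well order on the free monoid $M(Z)$. This is a classical fact, and since it is attributed to~\mcite{BN}, a short self-contained argument suffices. The plan is to verify first that $\leq_{\dl}$ is a total order and then that every nonempty subset has a least element.

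\textbf{Totality and antisymmetry.} First I would observe that $\leq_{\dl}$ is a linear order. Given $u, v \in M(Z)$, compare their $Z$-degrees: if $\deg_Z(u) \neq \deg_Z(v)$ the smaller-degree word is strictly smaller and we are done. If $\deg_Z(u) = \deg_Z(v) = r$, then both are words of the same length $r$ over the totally ordered alphabet $(Z, \leq_Z)$, and $\leq_{lex}$ restricted to length-$r$ words is the usual lexicographic order on $Z^r$, which is total (and antisymmetric) because $\leq_Z$ is. Hence exactly one of $u <_{\dl} v$, $u = v$, $v <_{\dl} u$ holds. Transitivity is checked by the same case split: degrees are compared by the transitive order on $\NN$, and within a fixed degree $r$ by the transitive lexicographic order on $Z^r$; the only mixed case (one inequality by degree, one within a degree) forces the degrees to be ordered strictly, which again gives the conclusion.

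\textbf{Well-foundedness.} Let $\emptyset \neq U \subseteq M(Z)$. Let $r_0 = \min\{\deg_Z(u) \mid u \in U\}$, which exists since $\deg_Z$ takes values in $\NN$. Set $U_0 = \{u \in U \mid \deg_Z(u) = r_0\}$; every element of $U \setminus U_0$ is $\leq_{\dl}$-larger than every element of $U_0$, so it suffices to find the least element of $U_0$. Now $U_0$ consists of words of a single fixed length $r_0$ over $Z$, so I identify $U_0$ with a nonempty subset of $Z^{r_0}$ and must show the lexicographic order on $Z^{r_0}$ induced by a well order $\leq_Z$ is well-founded. This I would prove by induction on $r_0$: for $r_0 = 0$ the set $U_0 = \{1\}$ is a singleton; for the inductive step, let $a = \min\{ (u_1) \mid u_1 u_2 \cdots u_{r_0} \in U_0\}$ be the least first letter (using that $\leq_Z$ is a well order on $Z$), restrict to $U_0' = \{u \in U_0 \mid u_1 = a\}$, strip the first letter to get a nonempty subset of $Z^{r_0 - 1}$, apply the induction hypothesis to obtain its least element $w'$, and then $a w'$ is the least element of $U_0'$, hence of $U_0$, hence of $U$.

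\textbf{Main obstacle.} There is no serious obstacle here; the only point that needs care is the bookkeeping that the lexicographic order on $Z^r$ is well-founded when $\leq_Z$ is — this is where the well-order hypothesis on $Z$ (not merely total order) is genuinely used, and it is exactly the inductive argument above. One should also be slightly careful that the lexicographic comparison in the definition of $\leq_{\dl}$ is only ever invoked between words of equal length, so that no issue arises from prefixes (e.g.\ $ab$ versus $a$), which is automatic because such pairs have different $Z$-degrees and are separated at the degree stage. Assembling these observations completes the proof.
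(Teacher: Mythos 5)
Your proposal is correct. The paper itself gives no proof of this lemma at all — it is simply quoted from the cited reference [BN] as a standard fact — so there is nothing in the text to compare against; your self-contained argument (first reduce a nonempty subset to its stratum of minimal $Z$-degree, then show by induction on the length $r_0$ that the lexicographic order on words of a fixed length over a well-ordered alphabet is well founded, choosing the least first letter and stripping it) is exactly the standard proof and is sound, including the two points that genuinely need care: the well-order hypothesis on $Z$ is used to pick the least first letter, and the prefix pathology of plain lexicographic order (e.g.\ $b >_{lex} ab >_{lex} aab >_{lex}\cdots$) is excluded because $\leq_{lex}$ is only invoked between words of equal degree. One remark: your fixed-length induction is in substance a special case of the fact, used later in the paper as Lemma~\ref{le:wellord}(\mref{it:well2}) with a citation to [Ha], that a finite lexicographic product of well orders is a well order; identifying the degree-$r_0$ stratum with $Z^{r_0}$ and invoking that statement would shorten your argument, at the cost of importing a result the paper only states later.
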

\begin{defn}
Let $Y$ be a nonempty set.
\begin{enumerate}
\item
A {\bf preorder} or {\bf quasiorder} $\leq_Y$ on $Y$ is a binary relation  that is reflexive and transitive, that is, for all $x,y,z\in Y$, we have
\begin{enumerate}
\item
$x\leq_Y x;$ and
\item
if $x\leq_Y y, y\leq_Y z,$ then $x\leq_Y z.$
\end{enumerate}
We denote $x=_Y y$ if $x\leq_Y y$ and $x\geq_Y y$. If $x\leq_Y y$ but $x\neq_Y y$, we write $x<_Y y$ or $y>_Y x$.
\item
A {\bf pre-linear order} $\leq_Y$ on $Y$ is a preorder $\leq_Y$ such that either $x\leq_Y y$ or $x\geq_Y y$ for all $x,y\in Y$.
\end{enumerate}
\end{defn}
We  define the composition of two or more preorders.
\begin{defn}
\begin{enumerate}
\item
Let $k\geq 1$ and let $\leq_{\alpha_i}, 1\leq i\leq k,$ be preorders on a set $Y$. Let $u,v\in Y$. Recursively define
\begin{equation}
u\leq_{\alpha_1,\cdots,\alpha_k} v \Leftrightarrow
\left\{\begin{array}{ll} u<_{\alpha_1} v, \\
\text{or } u=_{\alpha_1} v \text{ and } u\leq_{\beta} v, \end{array} \right. \mlabel{eq:compord}
\end{equation}
where $\leq_{\beta}:=\leq_{\alpha_2,\cdots,\alpha_k}$ is defined by the induction hypothesis, with the convention that $\leq_\beta$ is the trivial relation when $k=1$, namely $u\leq_\beta v$ for all $u, v\in Y$.
\item
Let $k\geq 1$ and let $(Y_i,\leq_{Y_i}), 1\leq i\leq k$, be partially ordered sets. Define the {\bf lexicographical product order } $\leq_{\clex}$ on the cartesian product $Y_1  \times Y_2\times\cdots \times Y_k$ by  recursively  defining  \begin{equation}(x_1,x_2,\cdots,x_k) \leq_{\clex}(y_1,y_2, \cdots,y_k)\Leftrightarrow
\left\{\begin{array}{ll} x_1<_{Y_1}y_1,\\
\text{or}~ x_1= y_1 ~\text{and}~ (x_2,\cdots,x_k)\leq_{\clex}(y_2, \cdots,y_k),
\end{array}\right.\mlabel{eq:carord}
\end{equation}
where $(x_2,\cdots,x_k)\leq_{\clex} (y_2,\cdots,y_k)$ is defined by the induction hypothesis, with the convention that $\leq_{\clex}$ is the trivial relation  when $k=1$.
\item
Let $u=u_0\lc \mpu_1\rc u_1\lc \mpu_2\rc \cdots\lc  \mpu_r \rc u_r$,  $ v=v_0\lc \mpv_1\rc v_1\lc \mpv_2\rc \cdots\lc \mpv_s\rc v_s \in \frakM(Z)$, where $u_0,u_1,\cdots,u_r$, $v_0,v_1,\cdots,v_s\in M(Z)$
and $\mpu_1,\mpu_2,\cdots, \mpu_r, \mpv_1,\mpv_2,\cdots,\mpv_s\in \frakM(Z)$.
Define
\begin{equation} u\leq_{\dgp}v \Leftrightarrow \deg_P (u) \leq
\deg_P (v), \mlabel{eq:dgpord}
\end{equation}
where the {\bf $P$-degree} $\deg_P(u)$ of $u$ is the number of occurrence of $P=\lc\ \rc$ in $u$. Define
\begin{equation}
u\leq_{\brp}v \Leftrightarrow r\leq s \quad (\text{that is } |u|_P\leq |v|_P),
\mlabel{eq:brord}
\end{equation}
where $|u|_P$ is the $P$-breadth defined after Eq.~(\mref{eq:decom}).
\end{enumerate}
\end{defn}

\begin{lemma}
\begin{enumerate}
\item
Let $k\geq 1$. Let $\leq_{\alpha_1},\cdots, \leq_{\alpha_{k-1}}$ be pre-linear orders on $Z$ with descending chain condition (i.e., each decreasing chain in $(Z,\leq_{\alpha_i})$ stabilizes after finitely many steps), and $\leq_{\alpha_k}$ is a well order on $Z$. Then the order  $\leq_{\alpha_1,\cdots,\alpha_k}$ is a well order on $Z$. \mlabel{it:well1}
\item
{\bf{\cite{Ha}}} Let $\leq_{Y_i}$ be a well order on $Y_i, 1\leq i\leq k, k\geq 1$. Then the lexicographical product order ~$\leq_{\clex}$ is a well order on the cartesian product $Y_1 \times Y_2\times\cdots \times Y_k$.\mlabel{it:well2}
\item
The  pre-linear orders
$\leq_{\dgp}$ and $\leq_{\brp}$ satisfy the descending chain condition on $\frakM(Z)$.\mlabel{it:well3}
\end{enumerate}
\mlabel{le:wellord}
\end{lemma}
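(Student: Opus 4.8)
The plan is to treat the three parts separately, since each requires a different standard technique. For part (\ref{it:well1}), I would argue by induction on $k$. The base case $k=1$ is the hypothesis that $\leq_{\alpha_1}$ is a well order. For the inductive step, suppose $\leq_{\alpha_2,\dots,\alpha_k}$ is a well order (using that $\leq_{\alpha_2},\dots,\leq_{\alpha_{k-1}}$ are pre-linear orders with the descending chain condition and $\leq_{\alpha_k}$ is a well order). Now take any nonempty subset $A\subseteq Z$. Consider the $\leq_{\alpha_1}$-behavior on $A$: since $\leq_{\alpha_1}$ is a pre-linear order satisfying DCC, every nonempty subset has a $\leq_{\alpha_1}$-minimal element, so let $A_0 = \{a\in A \mid a \leq_{\alpha_1} b \text{ for all } b\in A\}$ be the set of $\leq_{\alpha_1}$-minimal elements of $A$; it is nonempty, and by definition of $\leq_{\alpha_1,\dots,\alpha_k}$ in Eq.~(\ref{eq:compord}), the $\leq_{\alpha_1,\dots,\alpha_k}$-minimum of $A$ lies in $A_0$ and is exactly the $\leq_{\alpha_2,\dots,\alpha_k}$-minimum of $A_0$, which exists by the inductive hypothesis. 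The one point requiring a little care is that $\leq_{\alpha_1,\dots,\alpha_k}$ is a total (well) order rather than merely a well-founded preorder: totality follows because each $\leq_{\alpha_i}$ is pre-linear (for $i<k$) and $\leq_{\alpha_k}$ is a total order, so Eq.~(\ref{eq:compord}) always decides between $u$ and $v$; antisymmetry follows since if $u\leq v$ and $v\leq u$ then $u =_{\alpha_1} v$ and inductively $u = v$ under $\leq_{\alpha_2,\dots,\alpha_k}$.

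For part (\ref{it:well2}), this is cited to \cite{Ha}, so I would simply invoke that reference; if a self-contained argument is wanted, it is again induction on $k$ combined with the standard fact that a finite lexicographic product of well orders is a well order (given a nonempty subset, take elements minimal in the first coordinate, then minimal in the second among those, and so on; finiteness of $k$ guarantees this terminates). For part (\ref{it:well3}), I would show directly that $\leq_{\dgp}$ and $\leq_{\brp}$ satisfy the descending chain condition on $\frakM(Z)$. Both are defined by pulling back the usual order on $\NN$ along the functions $\deg_P\colon \frakM(Z)\to\NN$ and $|\cdot|_P\colon\frakM(Z)\to\NN$ respectively. A strictly $\leq_{\dgp}$-decreasing chain $u_0 >_{\dgp} u_1 >_{\dgp}\cdots$ would give a strictly decreasing chain $\deg_P(u_0) > \deg_P(u_1) > \cdots$ in $\NN$, which is impossible; the same argument applies verbatim to $\leq_{\brp}$ using $|\cdot|_P$. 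Note these are only pre-linear orders, not well orders, since many distinct monomials share the same $P$-degree or $P$-breadth—so DCC is the correct (and best possible) statement.

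The main obstacle, such as it is, is purely bookkeeping in part (\ref{it:well1}): one must keep straight the distinction between a pre-linear order with DCC (where minimal elements exist but need not be unique) and a well order (total, with unique minima), and verify that the recursive composition in Eq.~(\ref{eq:compord}) correctly promotes a tuple consisting of DCC pre-linear orders followed by one well order into a genuine well order. The key conceptual point is that at each stage the ``ties'' under $\leq_{\alpha_1}$ form a nonempty set on which one recurses, and because the last order $\leq_{\alpha_k}$ is a genuine well order, the recursion bottoms out in a unique minimum; no infinite regress is possible because there are only finitely many coordinates. Everything else is routine, and parts (\ref{it:well2}) and (\ref{it:well3}) are essentially immediate given the cited result and the definitions.
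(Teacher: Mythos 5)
Your proposal is correct, and parts (\mref{it:well2}) and (\mref{it:well3}) coincide with the paper's treatment (citation of \cite{Ha}, and pulling back the order on $\ZZ_{\geq 0}$ along $\deg_P$ and $|\cdot|_P$). For part (\mref{it:well1}) you take a mildly different route to the same end: both you and the paper induct on $k$ by peeling off $\leq_{\alpha_1}$ and invoking the inductive hypothesis for $\leq_{\alpha_2,\cdots,\alpha_k}$, but the paper verifies well-orderedness via the characterization ``linear order plus descending chain condition'' --- totality is checked case by case from Eq.~(\mref{eq:compord}), and then an infinite descending chain is shown to have eventually constant $\leq_{\alpha_1}$-class (by DCC of $\leq_{\alpha_1}$) and hence to stabilize (by the inductive well order) --- whereas you verify the least-element property directly: in a nonempty $A\subseteq Z$ you pass to the set $A_0$ of $\leq_{\alpha_1}$-minimal elements and take its minimum under the inductively given well order. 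The two arguments buy slightly different things: the paper's chain-stabilization argument is the one reused almost verbatim later (e.g., in the proof of Lemma~\mref{le:wlord} for $\leq_{\db}$), so it keeps the whole section in one idiom; your subset argument exhibits the minimum explicitly and, usefully, makes the antisymmetry point explicit (ties propagate down to $\leq_{\alpha_k}$, whose antisymmetry as a genuine well order forces equality), a point the paper's proof leaves implicit. One small step you should spell out is that for a pre-linear order with DCC a $\leq_{\alpha_1}$-minimal element of $A$ is automatically $\leq_{\alpha_1}$ every element of $A$ (pre-linearity turns ``minimal'' into ``minimum up to $=_{\alpha_1}$''), so that your $A_0$ is indeed nonempty and your claimed minimum of $A$ really dominates the elements outside $A_0$; this is a one-line check and not a genuine gap.
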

\begin{proof}
\noindent (\mref{it:well1}) We prove the claim by induction on $k\geq 1$. When $k=1$, $\leq_{\alpha_1,\cdots,\alpha_k}$ is a well order by the assumption. Assume that the claim holds for $k\leq n$ where $n\geq 1$ and consider the case when $k=n+1$. Denote $\leq_{\beta}=\leq_{\alpha_2,\cdots,\alpha_{n+1}}$. Then $\leq_\beta$ is a well order by the induction hypothesis. We first show that $\leq_{\alpha_1,\beta}$ is a linear order.
For all $u,v\in Z$, we have $u\leq_{\alpha_1} v$ or $ u\geq_{\alpha_1}v$ since $\leq_{\alpha_1}$ is a pre-linear order. If $u\neq_{\alpha_1} v$, then we have $u <_{\alpha_1} v$ or $u>_{\alpha_1} v$. Thus we obtain $u<_{\alpha_1,\beta}v$  or $u>_{\alpha_1,\beta} v$ and we are done.
If $u=_{\alpha_1}v$, then $u\geq_\beta v$ or $u\leq_\beta v$ since $\leq_\beta$ is a linear order. Thus we have $u\geq_{\alpha_1,\beta}v$ or $u\leq_{\alpha_1,\beta} v$. Therefore, $\leq_{\alpha_1,\beta}$ is a linear order.

Thus we just need to prove that the order $\leq_{\alpha_1,\cdots,\alpha_k}$ satisfies the descending chain condition.  Suppose that $u_1 \geq_{\alpha_1,\beta} u_2 \geq_{\alpha_1,\beta} \cdots$. Since $\leq_{\alpha_1}$ has descending chain condition, there exists $t\geq 1$ such that $u_t=_{\alpha_1}u_{t+1}=_{\alpha_1}\cdots$. Thus we must have $u_t\geq_{\beta}u_{t+1}\geq_{\beta}\cdots.$ By the induction hypothesis, $\geq_{\beta}$ is a well order and hence satisfies the descending chain condition. Thus the descending chain $u_1\geq_{\alpha_1,\beta} u_2\geq_{\alpha_1,\beta}\cdots$ stabilizes after finite steps. Therefore, $\leq_{\alpha_1,\beta}$ is a well order. This completes the induction. \noindent

(\mref{it:well2}) is proved in~ \cite[Chapter 4, Theorem~1.13]{Ha}. \noindent

(\mref{it:well3}) follows since both $\deg_P(u)$ and $|u|_P$ (defined after Eq.~(\mref{eq:decom})) take values in $\ZZ_{\geq 0}$ and hence satisfy the descending chain condition.
\end{proof}

For $m\geq0$, denote
$$\frakM^m(Z)=\{u\in \frakM(Z)~|~|u|_P=m\},
$$
Also denote $\frakM_n^m(Z)=\frakM_n(Z)\cap \frakM^m(Z)$, where $n\geq0$. For $u, v\in \frakM_n^m(Z)$, define
\begin{equation}
u\leq_{\lex_n} v \Leftrightarrow
(\mpu_1,\mpu_2,\cdots,\mpu_m,u_0,\cdots, u_{m}) \leq_{\clex} (\mpv_1,\mpv_2,\cdots,\mpv_m,v_0, \cdots,v_{m})
\end{equation}
We now define a well order on $\frakM_n(Z), n\geq 0,$ by the following recursion.
\begin{enumerate}
\item
Let  $u, v\in\frakM_0(Z)\backslash\{1\}=S(Z)$. Let $u=u_0u_1\cdots u_r$ and $ v=v_0v_1\cdots v_s$, where $u_0, u_1,\cdots, u_r,$ $v_0, v_1,$ $\cdots,v_s\in Z$. Then define $$u\leq_0 v \Leftrightarrow u \leq_{\dl} v.$$
By Lemma~\mref{le:dlexord}, $\leq_0$  is a well order on $\frakM_0(Z)$.
\item
Suppose that a well order $\leq_n$ has been defined on $\frakM_n(Z)$ for $n\geq1$. Let $u, v\in \frakM_{n+1}(Z)= M(Z\sqcup\lc\frakM_n(Z)\rc)$. Let
$u=u_0\lc \mpu_1\rc u_1\lc \mpu_2\rc \cdots\lc \mpu_r\rc u_r$ and $ v=v_0\lc \mpv_1\rc v_1\lc \mpv_2\rc \cdots\lc  \mpv_s\rc v_s$, where $u_0,u_1,\cdots,u_r$, $v_0,v_1,\cdots,v_s\in M(Z)$ and $\mpu_1,\mpu_2,\cdots, \mpu_r, \mpv_1,\mpv_2,\cdots,\mpv_s\in \frakM_n(Z)$.
First suppose that $r=s=m$ for some $m\geq 0$. Then define
\begin{equation} u\leq_{\lex_{n+1}} v\Leftrightarrow
(\mpu_1, \mpu_2 \cdots,\mpu_m,u_0,\cdots,u_m) \leq_{\clex} (\mpv_1,\mpv_2,\cdots, \mpv_m,v_0,\cdots,v_m). \mlabel{eq:clex}
\end{equation}
Since the order $\leq_n$ (resp. $\leq_{\dl}$) is a  well order on $\frakM_n(Z)$ by the induction hypothesis (resp. on M(Z)), the order $\leq_{\clex}$ is a well order on $\frakM_n(Z)^m\times M(Z)^{m+1}$ by Lemma~\mref{le:wellord}(\mref{it:well2}), and hence the order $\leq_{\lex_{n+1}}$ is a well order on $\frakM_{n+1}^m(Z)$. In general define
 \begin{equation}
  u\leq_{n+1} v~ \Leftrightarrow u\leq_{\dgp,\brp, \lex_{n+1}} v
  \Leftrightarrow \left\{
\begin{array}{l}
u<_{\dgp} v, \\
\text{or}~ u=_{\dgp} v ~\mbox{and}~ u<_{\brp}v, \\ \text{or}~
u=_{\dgp} v, u=_{\brp}v  ~\mbox{and}~ u \leq_{\lex_{n+1}} v. \\
\end{array}\right.\mlabel{eq:leq}
\end{equation}
Since the orders $\leq_{\dgp},\leq_{\brp}$ satisfy the descending chain condition and $ \leq_{\lex_{n+1}}$ is a well order, we conclude that $\leq_{n+1}$ is a well order on $\frakM_{n+1}(Z)$ from Lemma~\mref{le:wellord}(\mref{it:well1}).
\end{enumerate}

From the definition of $\leq_n$ for $n\geq0$, we see that the restriction of the order $\leq_{n+1}$ to $\frakM_n(Z)$ equals to the order $\leq_n$. Thus we can  define the order
\begin{equation}
\leq_{\db}:=\dirlim \leq_n=\bigcup\leq_n \mlabel{eq:dbord}
\end{equation}
on the direct system $\frakM(Z)=\dirlim \frakM_n(Z)$.

We note that if $\leq_{n+1}$ were defined by $\leq_{n+1}':=\leq_{\brp,\lex_{n+1}}$, then the resulting order $\leq_{\db}'=\dirlim \leq_n'$ would not be a well order. For example $\lc u\rc \lc v\rc
>_{\db}' \lc \lc u\rc \lc v\rc\rc >_{\db}' \lc\lc \lc u\rc\,\lc
v\rc\rc\rc>_{\db}' \cdots$ is an infinite decreasing chain.
\begin{lemma}
The order $\leq_{\db}$ is a well order on $\frakM(Z)$. \mlabel{le:wlord}
\end{lemma}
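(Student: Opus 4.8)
The plan is in two stages: first verify that $\leq_{\db}$ is a linear order on $\frakM(Z)$, and then, the substantive point, that it is well-founded. For linearity, note that any two bracketed words $u,v\in\frakM(Z)$ lie in a common stage $\frakM_n(Z)$ with $n=\max\{\dep(u),\dep(v)\}$, on which $\leq_n$ is a well order, hence a linear order; since the restriction of $\leq_{n+1}$ to $\frakM_n(Z)$ is $\leq_n$, the union $\leq_{\db}=\bigcup_n\leq_n$ is a well-defined relation on $\frakM(Z)$ that inherits reflexivity, antisymmetry, transitivity and totality from the $\leq_n$. I would also record, for use below, that by Eq.~\eqref{eq:leq} the outermost coordinate of each $\leq_n$ (for $n\geq 1$) is $P$-degree: if $\deg_P(u)<\deg_P(v)$ and $u,v\in\frakM_n(Z)$ then $u<_n v$, and hence $u<_{\db}v$.

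The crux of the whole argument is an elementary bound: $\dep(u)\leq \deg_P(u)$ for every $u\in\frakM(Z)$. This I would prove by induction on $\dep(u)$ using $\frakM_n(Z)=M(Z\cup\lc\frakM_{n-1}(Z)\rc)$: an element of depth $n\geq 1$ has a factor $\lc w\rc$ with $\dep(w)=n-1$, and since $\deg_P$ is additive over concatenation with $\deg_P(\lc w\rc)=1+\deg_P(w)$, we get $\deg_P(u)\geq 1+\deg_P(w)\geq n$ by the inductive hypothesis.

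For well-foundedness, let $A\subseteq\frakM(Z)$ be nonempty. Since $\{\deg_P(u)\mid u\in A\}\subseteq\ZZ_{\geq 0}$, it has a least element $d$; set $A_0:=\{u\in A\mid\deg_P(u)=d\}$. By the key inequality, every element of $A_0$ has depth $\leq d$, so $A_0\subseteq\frakM_d(Z)$, on which $\leq_{\db}$ coincides with the well order $\leq_d$; hence $A_0$ has a $\leq_{\db}$-least element $m$. One then checks $m$ is $\leq_{\db}$-least in all of $A$: for $u\in A$ with $\deg_P(u)=d$ we have $u\in A_0$, so $m\leq_{\db}u$; for $u\in A$ with $\deg_P(u)>d=\deg_P(m)$, the recorded fact gives $m<_{\db}u$. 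Thus every nonempty subset has a least element, and, combined with linearity, $\leq_{\db}$ is a well order.

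The one genuine obstacle—precisely what the remark just before the lemma warns about, via the infinite descending chain for the variant order $\leq_{\db}'$—is that an infinite $\leq_{\db}$-decreasing chain could a priori wander through infinitely many stages $\frakM_n(Z)$, so that well-foundedness cannot simply be pushed down to a single $\leq_n$. This is defeated exactly by the interplay of the two facts above: the outermost coordinate of every $\leq_n$ is the $\ZZ_{\geq 0}$-valued quantity $\deg_P$, and $\dep\leq\deg_P$ forces the stage to be bounded once $\deg_P$ stabilizes, confining the tail of any descending chain to one $\frakM_d(Z)$, where $\leq_d$ is already known to be a well order. Everything else is routine bookkeeping with the direct-limit structure $\frakM(Z)=\dirlim\frakM_n(Z)$.
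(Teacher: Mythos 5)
Your proof is correct and takes essentially the same route as the paper: linearity comes from the direct limit of the compatible linear orders $\leq_n$, and well-foundedness is reduced, via the bound $\dep(u)\leq \deg_P(u)$ (which is exactly the paper's containment $\{u\,|\,\deg_P(u)\leq k\}\subseteq \frakM_k(Z)$, asserted there without proof and proved by you), to a single stage $\frakM_d(Z)$ on which $\leq_{\db}$ restricts to the well order $\leq_d$. The only cosmetic difference is that you use the least-element characterization of well orders, whereas the paper argues with the descending chain condition, noting that along any $\leq_{\db}$-descending chain the $P$-degree eventually stabilizes, so the tail lies in one $\frakM_k(Z)$ and stabilizes there.
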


\begin{proof}
Since $\leq_{\db}$ is a linear order on $\frakM(Z)$ as a direct limit of linear orders $\leq_n$, we only need to verify that $\leq_{\db}$ satisfies the descending chain condition.

Let a descending chain $v_1\geq_{\db}v_2\geq_{\db}\cdots$ in $\frakM(Z)$ be given. Since $\geq_{\dgp}$ satisfies the descending chain condition, there exists  $\ell\geq0$ such that $v_{\ell}=_{\dgp}v_{\ell+1}
=_{\dgp}\cdots$. Thus $\deg_P(v_{\ell})=\deg_P(v_{\ell+1})=
\cdots=k$ for some $k\geq 0$. Then  $v_{\ell},v_{\ell+1},\cdots$ are in
$$ \frakM_{(k)}:=\{ u\in \frakM(Z)\,|\, \deg_P(u) \leq k\}.$$
Note that $\frakM_{(k)}\subseteq \frakM_k(Z)$. Since the restriction of $\leq_{\db}$ to $\frakM_k(Z)$ and hence to $\frakM_{(k)}$ is $\leq_k$ which, as shown above, satisfies the descending chain condition, the chain $v_{\ell}\geq_{\db}v_{\ell+1}
\geq_{\db}\cdots$ stabilizes after finite steps. Therefore,
$v_1\geq_{\db}v_2\geq_{\db}\cdots$ stabilizes after finite steps.
\end{proof}

\begin{defn}
A well order $\leq_\alpha$ on $\frakM(Z)$ is called {\bf bracket compatible} (resp. {\bf left (multiplication) compatible}, resp. {\bf right (multiplication) compatible}) if
$$ u\leq_\alpha v \Rightarrow \lc u\rc \leq_\alpha \lc v\rc,\
\text{(resp. } wu \leq_\alpha wv,\  \text{resp. } uw \leq_\alpha vw, \quad \text{for all } w\in \frakM(Z)).$$
\end{defn}

\begin{lemma}
A well order $\leq$ is a monomial order on $\frakM(Z)$ if and only if $\leq$ is bracket compatible,  left compatible and right compatible. \mlabel{le:eqmord}
\end{lemma}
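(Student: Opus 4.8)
The plan is to prove the two directions of the equivalence in Lemma~\ref{le:eqmord} separately, with the forward direction being essentially trivial and the reverse direction requiring a careful induction on the structure of a $\star$-bracketed word.

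\textbf{Forward direction.} Suppose $\leq$ is a monomial order on $\frakM(Z)$. To see bracket compatibility, note that the operation $u \mapsto \lc u \rc$ is a special case of $u \mapsto q|_u$, namely with $q = \lc \star \rc \in \frakM^\star(Z)$; hence $u < v$ implies $\lc u \rc = q|_u < q|_v = \lc v \rc$. Similarly, left compatibility follows by taking $q = w\star$ and right compatibility by taking $q = \star w$ for any $w \in \frakM(Z)$. Equality cases are immediate since $u = v$ gives $q|_u = q|_v$.

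\textbf{Reverse direction.} This is the substantive part. Assume $\leq$ is a well order that is bracket compatible, left compatible, and right compatible. Given $u < v$ in $\frakM(Z)$ and an arbitrary $q \in \frakM^\star(Z)$, I must show $q|_u < q|_v$. I would argue by induction on the depth of $q$ (as an element of $\frakM^\star(Z) \subseteq \frakM(Z^\star)$). For the base case, $q$ has depth $0$, so $q = q_1 \star q_2$ with $q_1, q_2 \in M(Z)$; then $q|_u = q_1 u q_2$ and $q|_v = q_1 v q_2$, and two applications of left and right compatibility (first $u < v \Rightarrow u q_2 < v q_2$, then multiply on the left by $q_1$) give $q|_u < q|_v$. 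For the inductive step, write $q = q_1 \lc q' \rc q_2$ with $q_1, q_2 \in \frakM(Z)$ and $q' \in \frakM^\star(Z)$ of strictly smaller depth. By the induction hypothesis applied to $q'$, we have $q'|_u < q'|_v$; by bracket compatibility, $\lc q'|_u \rc < \lc q'|_v \rc$; by right compatibility, $\lc q'|_u \rc q_2 < \lc q'|_v \rc q_2$; and by left compatibility, $q_1 \lc q'|_u \rc q_2 < q_1 \lc q'|_v \rc q_2$, which is exactly $q|_u < q|_v$. This completes the induction, and since $\leq$ is already assumed to be a well order, it is a monomial order.

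\textbf{Main obstacle.} The only delicate point is setting up the induction correctly: one must invoke the structural decomposition of an element of $\frakM^\star(Z)$ (coming from Eq.~(\ref{eq:omid}) applied to the alphabet $Z^\star$, tracking which factor contains the unique $\star$) so that the recursive substitution $q|_u = (q_1 \lc q'|_u \rc q_2)$ is literally valid, and confirm that the three compatibility hypotheses chain together to move the inequality outward through each layer of brackets and multiplications. Once the bookkeeping of the decomposition is in place, each individual step is a one-line application of one of the three hypotheses, so no genuinely hard estimate is involved.
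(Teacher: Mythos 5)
Your overall strategy is the same as the paper's: the forward direction by specializing $q$ to $\lc\star\rc$, $w\star$ and $\star w$, and the converse by induction on the depth of $q$. However, your inductive step has a gap: you decompose $q = q_1\lc q'\rc q_2$ with $q'\in\frakM^\star(Z)$, which presupposes that the occurrence of $\star$ lies inside some bracket. A word $q$ of positive depth need not have this form, because its depth can come entirely from bracketed factors that do not contain $\star$; for instance $q=\lc z\rc\star$ with $z\in Z$ has depth $1$, yet $\star$ sits at the top level, so there is no factor $\lc q'\rc$ with $q'\in\frakM^\star(Z)$ and your decomposition (hence the appeal to the induction hypothesis) is unavailable. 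Your base case does not absorb this situation either, since there you insist $q_1,q_2\in M(Z)$, i.e.\ bracket-free.

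The fix is exactly the subcase the paper's proof treats separately: if the $\star$ in $q$ is not inside any bracket, write $q=w_1\star w_2$ with $w_1,w_2\in\frakM(Z)$ (not merely $M(Z)$) and conclude $q|_u\leq q|_v$ directly from left and right compatibility, with no use of the induction hypothesis; if the $\star$ is inside a bracket, your argument ($q=w_1\lc q'\rc w_2$ with $\dep(q')<\dep(q)$, then the induction hypothesis followed by bracket, right and left compatibility) goes through verbatim. With that case distinction added, the induction is exhaustive and your proof coincides with the paper's. Alternatively, you could induct on the number of brackets enclosing $\star$ rather than on $\dep(q)$, which makes the top-level occurrence the base case; either way, the missing case must be addressed explicitly.
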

\begin{proof}
Suppose that a well order $\leq$ is a monomial order. Let $u, v\in \frakM(Z)$ with $u\leq v$. By taking $q=\lc \star\rc, w\star$ and $\star w$ with $w\in \frakM(Z)$, we obtain $\lc u\rc \leq \lc v\rc$, $wu\leq wv$ and $uw\leq vw$ respectively, proving the bracket compatibility, left compatibility and right compatibility.

Conversely, suppose that a well order $\leq$ is bracket compatible, left compatible and right compatible. Let $u, v\in \frakM(Z)$ with $u\leq v$ and let $q\in \frakM^\star(Z)$ be given. We prove $q|_u \leq q|_v$ by induction on the depth $\dep(q)\geq 0$ of $q$. If $\dep(q)=0$, then $q\in M(Z\cup \{\star\})$ and hence $q=w_1\star w_2$ where $w_1, w_2\in M(Z)$.  Then by the left and right compatibility, we have $q|_u\leq q|_v$. Assume that $q|_u\leq q|_v$ has been proved for $q\in \frakM^\star(Z)$ with $\dep(q)\leq n$ where $n\geq 0$ and consider $q\in \frakM^\star(Z)$ with $\dep(q)=n+1$. If the $\star$ in $q$ is not in a bracket, then $q=w_1\star w_2$, where $w_1, w_2\in \frakM(Z)$. Then we have $q|_u\leq q|_v$ by the left and right compatibility. If the $\star$ in $q$ is in a bracket, then $q=w_1\lc q'\rc w_2$ with $w_1, w_2\in \frakM(Z)$ and $q'\in \frakM^\star(Z)$ with $\dep(q')=n$. Hence by the induction hypothesis, we have $q'|_u\leq q'|_v$. Then by bracket, left and right compatibility of $\leq$, we further have $q|_u\leq q|_v$,
completing the induction.
\end{proof}

\begin{theorem}
The order $\leq_{\db}$ is a monomial order on $\frakM(Z)$. \mlabel{thm:mord}
\end{theorem}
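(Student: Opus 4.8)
By Lemma~\ref{le:eqmord}, it suffices to verify that $\leq_{\db}$ is bracket compatible, left compatible, and right compatible. Since $\leq_{\db}$ is a well order by Lemma~\ref{le:wlord}, all three follow once we check the corresponding implications. The plan is to reduce each of the three compatibilities to statements about the auxiliary orders $\leq_{\dgp}$, $\leq_{\brp}$, and $\leq_{\lex_n}$, using the fact (noted just before Eq.~(\ref{eq:dbord})) that the restriction of $\leq_{\db}$ to $\frakM_n(Z)$ is $\leq_n$, so that any comparison $u \leq_{\db} v$ can be carried out in $\frakM_n(Z)$ for $n$ large enough to contain both $u$ and $v$ (and $\lc u\rc$, $\lc v\rc$, $wu$, $wv$, etc.). The key observation is that each of the three building-block relations behaves predictably under bracketing and multiplication.

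\textbf{Bracket compatibility.} Suppose $u \leq_{\db} v$. Compare $\lc u\rc$ and $\lc v\rc$. First, $\deg_P(\lc u\rc) = \deg_P(u)+1$ and $\deg_P(\lc v\rc)=\deg_P(v)+1$, so $u \leq_{\dgp} v \iff \lc u\rc \leq_{\dgp} \lc v\rc$, and strict inequality is preserved. If $u <_{\dgp} v$ we are done. If $u =_{\dgp} v$, note that $|\lc u\rc|_P = 1 = |\lc v\rc|_P$, so $\lc u\rc =_{\brp} \lc v\rc$ automatically, and the comparison of $\lc u\rc$ and $\lc v\rc$ reduces to $\leq_{\lex_{n+1}}$ where both are viewed as single-bracket words $\lc u\rc = 1\cdot\lc u\rc\cdot 1$; by Eq.~(\ref{eq:clex}) this is governed by the $\leq_{\clex}$-comparison of $(u, 1, 1)$ versus $(v,1,1)$, which reduces to comparing $u$ and $v$ under $\leq_n$, i.e. under $\leq_{\db}$. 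Hence $\lc u\rc \leq_{\db} \lc v\rc$.

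\textbf{Left and right compatibility.} Suppose $u \leq_{\db} v$; fix $w \in \frakM(Z)$ and compare $wu$ with $wv$ (the right case is symmetric). Here $\deg_P(wu) = \deg_P(w)+\deg_P(u)$ and $|wu|_P = |w|_P + |u|_P$ (with an adjustment only when the last factor of $w$ and the first of $u$ are both in $M(Z)$, in which case both $|wu|_P$ and $|wv|_P$ drop by the same amount, so the comparison under $\leq_{\brp}$ is unaffected); likewise for $wv$. Thus $u =_{\dgp} v \implies wu =_{\dgp} wv$ and $u <_{\dgp} v \implies wu <_{\dgp} wv$, and similarly for $\leq_{\brp}$, so in the cases $u <_{\dgp} v$ or ($u =_{\dgp} v$, $u <_{\brp} v$) we conclude immediately. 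In the remaining case $u =_{\dgp} v$, $u =_{\brp} v$, write the canonical decompositions of $u$ and $v$ as in Eq.~(\ref{eq:decom}); they have the same number $m$ of bracketed factors and the same $P$-degree, and $u \leq_{\lex_{n+1}} v$ means the tuple $(\mpu_1,\dots,\mpu_m,u_0,\dots,u_m)$ is $\leq_{\clex}$ the corresponding tuple for $v$. Prepending $w$ only alters the leftmost entry $u_0$ (merging $w$'s trailing $M(Z)$-part, or prepending $w$'s bracketed factors ahead of $\mpu_1$ with matching $M(Z)$-pieces)---and it alters $u_0$ and $v_0$ (or prepends identical initial segments to both tuples) \emph{in exactly the same way}. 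Since $\leq_{\clex}$ is a lexicographic order that is preserved under prepending a common prefix and under applying the left-compatible order $\leq_{\dl}$ to a common coordinate, we get $wu \leq_{\lex_{n+1}} wv$, hence $wu \leq_{\db} wv$. The analogous argument with $\leq_{\dl}$'s right compatibility handles $uw \leq_{\db} vw$.

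\textbf{Main obstacle.} The subtle point is the bookkeeping in left/right compatibility when the boundary factors of $w$ and $u$ interact: merging adjacent $M(Z)$-blocks or shifting the index set $\{0,\dots,m\}$ of the canonical decomposition. One must check that this merging is done identically for $u$ and for $v$ (which it is, since $u$ and $v$ have the same first bracketed factor structure once $u =_{\brp} v$ and $u =_{\dgp} v$, \emph{wait}---they need not, so one actually argues by cases on whether $u_0, v_0$ are empty or not, using that the $\leq_{\clex}$-comparison looks at $\mpu_1$ versus $\mpv_1$ first when the $u_i$'s don't yet come into play). Handling these case distinctions cleanly---so that prepending $w$ is seen to be an order-preserving operation on the $\leq_{\clex}$-tuples in every case---is the part that requires the most care, but it is entirely routine given that $\leq_{\dl}$ is left and right compatible on $M(Z)$ and $\leq_{\clex}$ respects common prefixes.
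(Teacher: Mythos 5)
Your proposal is correct and follows essentially the same route as the paper's proof: invoke Lemma~\mref{le:wlord} and Lemma~\mref{le:eqmord} and then run the three-way case analysis on $\leq_{\dgp}$, $\leq_{\brp}$, $\leq_{\lex_n}$ for bracketing and for left/right multiplication, with your bracket-compatibility argument merely merging the paper's last two cases by observing that the $\leq_{\clex}$-comparison of $(u,1,1)$ versus $(v,1,1)$ reduces to $u\leq_n v$. One harmless slip: $|wu|_P=|w|_P+|u|_P$ always, since the $P$-breadth counts only the bracketed factors, so no ``adjustment'' arises when the boundary $M(Z)$-blocks of $w$ and $u$ merge, and likewise the case-worries in your final paragraph dissolve because the canonical decomposition of $wu$ is exactly the concatenation with $w_t u_0$ merged, identically for $u$ and $v$.
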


\begin{proof}
By Lemma~\mref{le:wlord}, the order $\leq_{\db}$ is a well order on $\frakM(Z)$. So we just need to prove that $\leq_{\db}$ is bracket compatible, left compatible and right compatible by Lemma~\mref{le:eqmord}.  Let $u,v\in\frakM(Z)$. Then there exists a natural number $n$ such that $u,v\in\frakM_n(Z)$. Suppose that
\begin{equation}
u=u_0\lc \mpu_1\rc u_1\lc \mpu_2\rc \cdots\lc \mpu_r\rc u_r, \  v=v_0\lc \mpv_1\rc v_1\lc \mpv_2\rc \cdots\lc \mpv_s\rc v_s, \mlabel{eq:uv}
\end{equation}
where $u_0,u_1\cdots,u_r$, $v_0,v_1\cdots,v_s\in M(Z)$ and $\mpu_1,\mpu_2,\cdots, \mpu_r, \mpv_1,\mpv_2,\cdots,\mpv_s\in \frakM_{n-1}(Z).$
First we prove that $\leq_{\db}$ is bracket compatible. By the definition of $\leq_{\db}$,
 we just need to prove
$$u\leq_n v\Rightarrow \lc u\rc \leq_{n+1} \lc v\rc.$$ Suppose that
$u\leq_n v$. By the definition of $\leq_n$, we have the following three cases.

\noindent {\bf Case 1 } $u<_{\dgp}v $. This means that $\deg_P (u) <\deg_P (v) $. Then we have $\deg_P(\lc u\rc)=\deg_P(u) +1 < \deg_P (v)  + 1=\deg_P(\lc v\rc)$. This shows that $\lc u\rc \leq_{n+1} \lc v\rc$ by the definition of $\leq_{n+1}$.

\noindent {\bf Case 2 } $u=_{\dgp} v$ and $u<_{\brp} v$. Then we have $\lc u \rc =_{\dgp} \lc v\rc$. Since the $P$-breadth of $\lc u\rc$ and $\lc v\rc$ are equal to $1$, we have $\lc u\rc=_{\brp} \lc v\rc$. Since $u\leq_n v$ and by the definition of $\lex_{n+1}$ (that is, by Eq.~(\mref{eq:clex})), $\lc u\rc  \leq_{\lex_{n+1}} \lc v\rc$. Then by Eq.$(\mref{eq:leq}_{n+1}$, we have $\lc u\rc \leq_{n+1} \lc v\rc$.

\noindent {\bf Case 3 } $u=_{\dgp} v$, $u=_{\brp} v$ and $u\leq_{\lex_n}v$. This means that $\deg_P(u) =\deg_P (v)$, $r=s$ and $(\mpu_1,\mpu_2,\cdots,\mpu_r,u_0, \cdots,u_r) \leq_{\clex} (\mpv_1,\mpv_2,\cdots, \mpv_s,v_0,\cdots,v_s)$.
Then we have $ \lc u\rc=_{\dgp}\lc v\rc $ and $\lc u\rc=_{\brp} \lc v\rc$.  Thus we have $\lc u\rc \leq_{n+1}\lc v\rc$ since $u\leq_n v$.
\medskip

Hence the order $\leq_{\db}$ is bracket compatible. Next, we prove that $\leq_{\db}$ is left compatible. For any $w\in \frakM(Z)$, take $m\geq n$ such that $wu, wv\in \frakM_m(Z)$. Denote  $$w=w_0\lc \mpw_1 \rc w_1 \lc \mpw_2 \rc\cdots \lc \mpw_t\rc w_{t},$$
with $w_0, w_1, \cdots, w_{t}\in M(Z)$ and $\mpw_1, \mpw_2,\cdots, \mpw_t \in \frakM_{m-1}(Z)$.
Then with the notation in Eq.~(\mref{eq:uv}), we have
$$w u=w_0\lc \mpw_1 \rc w_1\lc\mpw_2\rc \cdots \lc \mpw_t\rc
w_{t}u_0\lc \mpu_1\rc u_1\lc \mpu_2\rc \cdots\lc \mpu_r\rc u_r$$ and
$$w v=w_0\lc \mpw_1 \rc w_1\lc \mpw_2\rc \cdots \lc \mpw_t\rc
w_{t}v_0\lc \mpv_1\rc v_1\lc \mpv_2\rc \cdots\lc \mpv_s\rc v_s.$$

Suppose that $u\leq_n v$. To prove $w u\leq_m w v$, we only need to consider the following  three cases.

\noindent {\bf Case 1} $u<_{\dgp} v$. Then we have $\deg_P(w u)=\deg_P(w)+\deg_P (u) <\deg_P(w)+\deg_P (v) =\deg_P(w v)$, and hence $w u<_{\dgp} w v$. Thus we get $w u\leq_m w v$.
\smallskip

\noindent {\bf Case 2} $u=_{\dgp} v$ and $u<_{\brp} v$. Then we obtain $\deg_P(w u)= \deg_P(w v)$ and $t+r <t+ s$. This means that $wu=_{\dgp} wv$ and $wu<_{\brp}wv$, and hence $w u\leq_m w v$.
\smallskip

\noindent {\bf Case 3} $u=_{\dgp} v$, $u=_{\brp}v$ and $u\leq_{\lex_n}v$. Then we have $wu=_{\dgp}w v$, $wu=_{\brp}wv$
and
$$(\mpu_1,\mpu_2,
\cdots,\mpu_r,u_0,\cdots,u_r) \leq_{\clex} (\mpv_1, \mpv_2,\cdots, \mpv_s,v_0,\cdots,v_s).$$
Thus we obtain
\begin{eqnarray*}
&&(\mpw_1, \mpw_2,\cdots,\mpw_t, \mpu_1,\mpu_2,\cdots, \mpu_r,w_0,\cdots,w_{t}u_0,
\cdots,u_r) \\
&\leq_{\clex}&(\mpw_1, \mpw_2,\cdots,\mpw_t, \mpv_1,\mpv_2,\cdots, \mpv_s,w_0,\cdots,w_{t}v_0, \cdots,v_s).
\end{eqnarray*}
Hence we get $wu\leq_{\lex_m} wv$. Thus we have $w u\leq_m w v$. This completes  proof of left compatibility of order $\leq_{\db}$. The proof of the right compatibility is the same, completing the proof.
\end{proof}

\subsection{Consequences on Rota-Baxter type operators}
\mlabel{ss:crb} We now verify that all the operators listed in Conjecture~\mref{con:rbt} are Rota-Baxter type operators and obtain the free objects in the corresponding categories of algebras.

\begin{prop}
Let $\phi(x,y)=\lc x\rc \lc y\rc - \lc B(x,y)\rc$  where $B(x,y)$ is in RBNF and has total operator degree $\leq 1$. More precisely,
\begin{eqnarray}
B(x,y)&:=&a_{0}y\lc x\rc +a_{1}x\lc y \rc +b_{0}\lc y\rc x+ b_{1} \lc x \rc y +c_{0}\lc yx\rc \notag \\& &+c_{1}  \lc x y \rc +d_{0}x \lc 1
 \rc y + d_{1} x y  \lc 1 \rc   + d_{2} y\lc 1 \rc x
  \mlabel{eq:mform}\\
 & &+d_{3} y x\lc 1\rc + d_{4}\lc 1\rc x y  + d_{5} \lc 1\rc  y x
+\varepsilon_{0}
 y x+ \varepsilon_{1}x y
\notag
\end{eqnarray}
where $a_{i}, b_{j}, c_{k}, d_{\ell}, \varepsilon_{t}\in \bfk$ $(0 \leq i, j, k, t \leq 1,
0\leq \ell \leq 5)$. Then $\Pi_\phi$ is compatible with the monomial order $\leq_{\db}$ in Theorem~\mref{thm:mord}. \mlabel{it:roc2} \mlabel{pp:rocomp}
\end{prop}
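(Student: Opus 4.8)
By Definition~\mref{def:irrS}(\mref{de:mcompat}), saying that $\Pi_\phi$ is compatible with $\leq_{\db}$ means exactly that $\overline{\lc B(u,v)\rc}<_{\db}\lc u\rc\lc v\rc$ for all $u,v\in\frakM(Z)$. The plan is to reduce this to a monomial-by-monomial comparison. Since $P=\lc\ \rc$ is $\bfk$-linear, $\lc B(u,v)\rc$ is the $\bfk$-linear combination of the bracketed words $\lc B_i(u,v)\rc$, where $B_i(x,y)$ ranges over the fourteen monomials displayed in Eq.~(\mref{eq:mform}); the leading monomial of any nonzero such combination is bounded above by $\max_i\lc B_i(u,v)\rc$ (cancellation can only delete monomials), so it suffices to show $\lc B_i(u,v)\rc<_{\db}\lc u\rc\lc v\rc$ for each $i$ (if $\lc B(u,v)\rc=0$ there is nothing to prove). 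Note that after substitution $\lc B_i(u,v)\rc$ need not lie in RBNF; this is harmless, as only the order matters here.

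Next I would use just the first two layers of $\leq_{\db}$, namely $\leq_{\dgp}$ followed by $\leq_{\brp}$ in Eq.~(\mref{eq:leq}). Record two elementary facts: $\deg_P$ is additive under concatenation with $\deg_P(\lc c\rc)=\deg_P(c)+1$; and for every $c\in\frakM(Z)$ one has $P$-breadth $|\lc c\rc|_P=1$ (directly from the canonical form Eq.~(\mref{eq:decom})), whereas $|\lc u\rc\lc v\rc|_P=2$. Since each $B_i(x,y)$ is totally linear in $x,y$ and contains at most one occurrence of $\lc\ \rc$, substitution gives $\deg_P(B_i(u,v))=\deg_P(u)+\deg_P(v)+\delta_i$, where $\delta_i\in\{0,1\}$ is the operator degree of $B_i(x,y)$ (so $\delta_i=0$ precisely for $B_i\in\{xy,yx\}$, and $\delta_i=1$ for the other twelve, counting the $\lc 1\rc$). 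Hence $\deg_P(\lc B_i(u,v)\rc)=\deg_P(u)+\deg_P(v)+\delta_i+1\le\deg_P(u)+\deg_P(v)+2=\deg_P(\lc u\rc\lc v\rc)$, and the gap $1-\delta_i\ge 0$ is independent of $u,v$.

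Finally, fix $n$ large enough that $\lc B_i(u,v)\rc$ and $\lc u\rc\lc v\rc$ both lie in $\frakM_n(Z)$, on which $\leq_{\db}$ restricts to $\leq_n$. If $\delta_i=0$ then $\deg_P(\lc B_i(u,v)\rc)<\deg_P(\lc u\rc\lc v\rc)$, so the first alternative in Eq.~(\mref{eq:leq}) gives $\lc B_i(u,v)\rc<_{\db}\lc u\rc\lc v\rc$. If $\delta_i=1$ then $\lc B_i(u,v)\rc=_{\dgp}\lc u\rc\lc v\rc$ while $|\lc B_i(u,v)\rc|_P=1<2=|\lc u\rc\lc v\rc|_P$, so the second alternative in Eq.~(\mref{eq:leq}) again gives $\lc B_i(u,v)\rc<_{\db}\lc u\rc\lc v\rc$. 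This proves compatibility. There is no genuine obstacle here; the only points requiring care are to count the $\lc 1\rc$ into the operator degree of the relevant $B_i$, to invoke that $\leq_{\db}$ restricted to each $\frakM_n(Z)$ equals $\leq_n$ so that Eq.~(\mref{eq:leq}) is available, and to remember that $\overline{\lc B(u,v)\rc}$ is governed by the largest of the $\lc B_i(u,v)\rc$ even in the presence of cancellation — indeed $\leq_{\db}$ was built precisely so that the $\deg_P$/$P$-breadth comparison settles this at a stroke.
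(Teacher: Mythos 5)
Your proof is correct and follows essentially the same route as the paper: compare $\deg_P(\lc B_i(u,v)\rc)\leq \deg_P(\lc u\rc\lc v\rc)$ and then break the tie with the $P$-breadth comparison $1<2$ in the order $\leq_{\db}$. Your version is merely a bit more explicit than the paper's (the case split on $\delta_i$, the treatment of possible cancellation and of $\lc B(u,v)\rc=0$), but the underlying argument is identical.
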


\begin{proof}
For each of the monomials $g(x,y)$ in Eq.~(\mref{eq:mform}),  we have $$\deg_P(g):=\deg_P(g(x,y))\leq 1.$$
Hence for any $u, v\in \frakM(Z)$, we have
$$\deg_P(\lc g(u,v)\rc) = \deg_P(u)+\deg_P(v)+\deg_P(g) +1\leq
\deg_P(u)+\deg_P(v)+2= \deg_P(\lc u\rc \lc v\rc).$$ Further, $ \lc g(u,v)\rc <_\brp \lc u\rc\lc v\rc$. Thus $\lc g(u,v)\rc <_{\db} \lc u\rc \lc v\rc$, and hence $\overline{\lc B(u,v)\rc}<_{\db}\lc u\rc\lc v\rc$.
\end{proof}

\begin{theorem}
Let $\phi:=\lc x\rc \lc y\rc -\lc B(x,y)\rc$, where $B(x,y)$  is in the list in Conjecture~\mref{con:rbt}.
\begin{enumerate}
\item
$B(x,y)$ and the corresponding operator $P$  are of Rota-Baxter type. \mlabel{it:exam1}
\item
All the statements in Theorem~\mref{thm:gsrb} hold for $B(x,y)$. \mlabel{it:exam2}
\item
The free $\phi$-algebra on a well ordered set $Z$ has its explicit construction in Theorem~\mref{thm:free}. \mlabel{it:exam3}
\end{enumerate}
\mlabel{thm:exam}
\end{theorem}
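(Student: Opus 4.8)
The plan is to verify, for each $B(x,y)$ listed in Conjecture~\ref{con:rbt}, the four defining conditions of a Rota-Baxter type OPI in Definition~\ref{de:rbtype}, and then to deduce parts (\ref{it:exam2}) and (\ref{it:exam3}) from the general theory developed in Sections~\ref{sec:rbcr}--\ref{sec:GSu}. Conditions (\ref{it:rb0}) and (\ref{it:rb1}) -- that $B(x,y)$ be totally linear in $x,y$ and in RBNF -- are immediate by inspection of the list: each $B(x,y)$ is a sum of totally linear monomials, none of which contains a subword of the form $\lc u\rc\lc v\rc$. For condition (\ref{it:rb2}), the first step is to observe that every $B(x,y)$ in the list is an instance of the template in Eq.~(\ref{eq:mform}), and that each of its monomials has operator degree at most $1$; hence Proposition~\ref{pp:rocomp} applies and shows that, for every set $Z$, the rewriting system $\Pi_\phi(Z)$ is compatible with the monomial order $\leq_{\db}$ constructed in Theorem~\ref{thm:mord}. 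Theorem~\ref{thm:roc1} then immediately gives that $\Pi_\phi(Z)$ is terminating, which is condition (\ref{it:rb2}).

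The only substantial point is condition (\ref{it:rb3}): for every set $Z$ and all $u,v,w\in\frakM(Z)$ the operated polynomial $B(B(u,v),w)-B(u,B(v,w))$ must be $\phi$-reducible to zero. I would establish this by a direct, case-by-case computation through the list, using a uniform procedure. Since $B(x,y)$ is totally linear, expanding the two substitutions by bilinearity and the linearity of $P$ writes each of $B(B(u,v),w)$ and $B(u,B(v,w))$ as an explicit $\bfk$-linear combination of bracketed monomials in $u,v,w$; because each monomial of $B$ has operator degree at most $1$, every monomial appearing in either expansion has operator degree at most $2$, and the only reducible subwords it can contain come from a single product $\lc\cdot\rc\lc\cdot\rc$, so a bounded number of one-step reductions (again using linearity of $P$) bring it to RBNF. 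One then checks that the two resulting normal forms coincide, so that the difference $\phi$-reduces to $0$. Equivalently, since a compatible monomial order is already in hand, Theorem~\ref{thm:rbr} reduces condition (\ref{it:rb3}) to convergence of $\Pi_\phi(Z)$, hence by Theorem~\ref{thm:gsrb} to the triviality of the intersection and inclusion compositions of $S_\phi(Z)$; here the only intersection composition is of the single shape $\lc B(u,v)\rc\lc w\rc-\lc u\rc\lc B(v,w)\rc$, so this is the same finite check repackaged, and it reproduces the symbolic computation of~\mcite{Sit}.

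I expect this verification of (\ref{it:rb3}) to be the main obstacle, not for any conceptual reason but because of the bookkeeping. The entries carrying the parameters $\lambda$ and $c$ must be tracked carefully through the reductions, and the entries involving $\lc 1\rc$ require particular care, since substituting a bracketed word for $x$ or $y$ into a monomial such as $x\lc 1\rc y$ can create new reducible subwords that are not visible before substitution, so one must be sure no reduction is overlooked. The practical burden is reduced considerably by exploiting the left--right symmetry -- reversal of words combined with interchanging the two argument variables -- which pairs up most of the list entries (for instance $x\lc y\rc\leftrightarrow\lc x\rc y$, $x\lc y\rc+y\lc x\rc\leftrightarrow\lc x\rc y+\lc y\rc x$, $cx\lc1\rc y+\lambda xy\leftrightarrow cy\lc1\rc x+\lambda yx$) and leaves the Rota-Baxter and Nijenhuis entries self-dual, and by the fact that several entries are specializations of others obtained by setting some coefficients to $0$ (e.g.\ the average operator $B=x\lc y\rc$ is a specialization of $x\lc y\rc+y\lc x\rc$ and of $x\lc y\rc+\lc x\rc y-x\lc1\rc y+\lambda xy$). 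This leaves only a handful of genuinely distinct computations.

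Once conditions (\ref{it:rb0})--(\ref{it:rb3}) are established, each $\phi=\lc x\rc\lc y\rc-\lc B(x,y)\rc$ from the list is of Rota-Baxter type, which is part (\ref{it:exam1}). For part (\ref{it:exam2}): by Corollary~\ref{co:rbcon} the rewriting system $\Pi_\phi(Z)$ is convergent for every set $Z$, and then by Theorem~\ref{thm:gsrb} the set $S_\phi(Z)$ is a Gr\"obner--Shirshov basis in $\bfk\mapm{Z}$ with respect to $\leq_{\db}$; thus both equivalent statements of Theorem~\ref{thm:gsrb} hold. For part (\ref{it:exam3}): $\phi$ is of Rota-Baxter type and, as shown above, compatible with the monomial order $\leq_{\db}$ on $\frakM(Z)$ for any well-ordered set $Z$, so Theorem~\ref{thm:free} applies verbatim and yields the explicit free $\phi$-algebra $(\kdot\rbw(Z),\diamondsuit,P_r)$ with the multiplication $\diamondsuit$ described there.
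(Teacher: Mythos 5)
Your proposal follows essentially the same route as the paper's proof: conditions (\ref{it:rb0}) and (\ref{it:rb1}) of Definition~\ref{de:rbtype} by inspection, termination via Proposition~\ref{pp:rocomp}, Theorem~\ref{thm:mord} and Theorem~\ref{thm:roc1}, a direct case-by-case reduction of $B(B(u,v),w)-B(u,B(v,w))$ to a common normal form for condition (\ref{it:rb3}) (the paper does exactly this for entries ($c$), ($k$), ($l$), citing the literature or left--right symmetry/subexpression arguments for the remaining entries), and then parts (\ref{it:exam2}) and (\ref{it:exam3}) from Corollary~\ref{co:rbcon}, Theorem~\ref{thm:gsrb} and Theorem~\ref{thm:free}. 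One caution: your labor-saving claim that an entry may be skipped because it is a ``specialization'' of a larger one obtained by setting coefficients to zero is not valid as stated, since altering the coefficients of $B$ changes the rewriting system $\Pi_\phi$ itself, and reducibility of the associator to zero does not pass to such specializations; for instance, for $B=x\lc y\rc+\mu\, y\lc x\rc$ the associator $\phi$-reduces to zero only for $\mu\in\{0,1\}$, so the computation for $x\lc y\rc+y\lc x\rc$ does not subsume the average operator. Those few entries must be (and easily are) checked directly, as the paper does; with that repair your argument coincides with the paper's.
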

In particular, the theorem holds for the Rota-Baxter operator, the Nijenhuis operator and the TD operator.

\begin{proof}
(\mref{it:exam1}) First note that all the 14 expressions listed in Conjecture~\mref{con:rbt} are in RBNF. Further by Proposition~\mref{pp:rocomp}, the rewriting systems $\Pi_\phi$ from these expressions are compatible with the monomial order $\leq_{\db}$. Hence the rewriting systems are terminating by Theorem~\mref{thm:roc1}. Thus we only need to check the $\phi$-reducibility.

The expressions ($e$) and ($f$) are known to have the $\phi$-reducibility~\mcite{Ca,EG1,LG}. The $\phi$-reducibility of expressions ($a$), ($b$), ($m$) and $(n)$ are easy to check.

For expression ($c$):  $B(x,y):=x\lc y\rc +y\lc x\rc$, we have
\begin{eqnarray*}
B(B(u,v),w)&=&B(u,v)\lc w\rc +w\lc B(u,v)\rc\\
&=&(u\lc v\rc +v\lc u\rc)\lc w \rc +w(\lc u\lc v\rc +v\lc u\rc
\rc)\\
&\astarrow_\phi
&u\lc v\lc w \rc\rc+  u\lc w\lc v\rc\rc + v\lc u\lc w \rc\rc +v\lc w\lc u\rc\rc+w\lc u\lc v\rc\rc +w\lc v\lc u\rc \rc.
\end{eqnarray*}
and
\begin{eqnarray*}
B(u,B(v,w))&=&u\lc B(v,w) \rc + B(v,w)\lc u \rc\\
&=&u(\lc v\lc w\rc +w\lc v\rc \rc) +(v\lc w\rc +w\lc v\rc) \lc
u\rc\\
&\astarrow_\phi &u\lc v\lc w \rc\rc+  u\lc w\lc v\rc\rc +v\lc w\lc u\rc\rc+ v\lc u\lc w \rc\rc +w\lc v\lc u\rc \rc+w\lc u\lc v\rc\rc.
\end{eqnarray*}

Thus $ B(B(u,v),w)-B(u,B(v,w))\astarrow_\phi  0$ by Theorem~\mref{thm:downarrow}.
\smallskip

The verification of expression ($d$) is similar to expression ($c$).
\smallskip

We next check expression ($k$). Then the expressions ($g$), ($h$) and ($i$) are subexpressions of expression ($k$) and can be verified similarly. For expression ($k$), we have

{\allowdisplaybreaks
\begin{eqnarray*}
B(B(u,v),w)&=&B(u,v)\lc w\rc +\lc B(u,v)\rc w- B(u,v)\lc 1\rc w- \lc
B(u,v)w\rc + \lambda B(u,v)w \\
&\astarrow_\phi & u\lc v\lc w\rc\rc+u\lc\lc v\rc w\rc-u\lc v\lc 1\rc w\rc- u\lc \lc vw\rc\rc +\lambda u\lc vw\rc +\lc u\rc v \lc w
\rc\\
&&- u\lc 1\rc v\lc w\rc-\lc uv\lc w \rc\rc+ \lc uv\lc 1\rc w \rc
+\lc\lc uvw \rc\rc-2\lambda\lc uvw\rc+\lambda uv\lc w\rc \\
&&+\lc u\lc v\rc\rc w+ \lc\lc u\rc v\rc w-\lc u\lc 1\rc v \rc
w-\lc\lc uv\rc\rc w +\lambda \lc uv \rc w  -\lc u\rc v\lc 1\rc w\\
&&+u\lc 1\rc v\lc 1\rc w -\lambda uv\lc 1\rc w -\lc u\lc v\rc w\rc
-\lc\lc u\rc vw\rc+\lc u\lc 1\rc vw\rc +\lambda\lc u\rc vw\\
&&-\lambda u\lc 1\rc vw +\lambda^{2}uvw
\end{eqnarray*}
}

and {\allowdisplaybreaks
\begin{eqnarray*}
B(u,B(v,w))&=&u\lc B(v,w) \rc + \lc u \rc B(v,w)- u\lc 1\rc
B(v,w)-\lc u B(v,w) \rc+\lambda u B(v,w)\\
&\astarrow_\phi & u\lc v\lc w\rc\rc +u\lc \lc v\rc w \rc-u\lc v\lc 1\rc w\rc-u\lc \lc vw\rc\rc +\lambda u\lc vw\rc+\lc u\rc v\lc w
\rc\\
&&+ \lc u \lc v\rc \rc w + \lc \lc u\rc v\rc w-\lc u\lc 1\rc v\rc w-
\lc \lc uv \rc \rc w+ \lambda \lc uv \rc w-\lc u\rc v \lc 1\rc w\\
&& -\lc\lc u\rc vw\rc +\lc u\lc 1\rc vw\rc +\lc \lc uvw \rc\rc-2\lambda \lc uvw \rc +\lambda \lc u\rc vw -u\lc 1\rc v\lc
w\rc\\
&&+ u\lc 1\rc v\lc 1 \rc  -\lambda u\lc 1\rc vw-\lc uv\lc w\rc\rc -
\lc u\lc v\rc w\rc +\lc uv\lc 1\rc w\rc+\lambda uv \lc w\rc\\
&& -\lambda uv \lc 1\rc w  +\lambda^{2}uvw.
\end{eqnarray*}
}
 Now  the $i$-th term in the expansion of $B(B(u,v),w)$ matches
with the
 $\sigma(i)$-th term in the expansion of $B(u,B(v,w))$. Here the
permutation $\sigma\in S_{26}$ is
 \begin{equation*}
\left ( \begin{array}{c} i\\\sigma(i)\end{array}\right) = \begin{array}{l} \left( \begin{array}{cccccccccccccccccccccccccccccccccccccc}
1&2&3&4&5&6&7&8&9&10&11&12&13&14\\
1&2&3&4&5&6&18&21&23&15&16&24&7&8
\end{array} \right ) \\
\medskip
\left( \begin{array}{ccccccccccccccccccccccccccccccccccccc}   15
&16&17&18&19&20&21&22&23&24&25&26\\
9 &10&11&12&19&25&22&13&14&17&20&26
 \end{array}  \right )
\end{array}
\end{equation*}
Thus $B(B(u,v),w)-B(u,B(v,w)) \astarrow_\phi 0.$

\smallskip
We finally verify expression ($l$). Then the remaining expression ($j$), obtained from expression ($l$) by replacing $\lc 1\rc xy$ with $xy\lc 1\rc$, is similarly verified.
Expression ($l$) is $B(x,y):={x\lc y\rc +\lc x\rc y}$ ${-x\lc 1\rc y-\lc 1\rc  xy + \lambda xy}.$ So we have {\allowdisplaybreaks
\begin{eqnarray*}
B(B(u,v ), w)&=&B(u,v)\lc w\rc +\lc B(u,v )\rc w-B(u,v)\lc 1\rc  w-
\lc 1\rc B(u,v )w+\lambda B(u,v)w\\
&\astarrow_\phi & u\lc v\lc w\rc\rc+ u\lc \lc v\rc w\rc- u\lc v\lc 1\rc w \rc-u\lc \lc 1\rc vw\rc+\lambda u\lc vw\rc+\lc u\rc v\lc
w\rc \\
&&-u\lc 1\rc v\lc w\rc-\lc 1\rc uv \lc w \rc + \lambda uv \lc w\rc+\lc u\lc v\rc \rc w+ \lc \lc u\rc v\rc w-\lc u\lc 1\rc v\rc w
\\
&&-\lc \lc 1\rc uv\rc w +\lambda \lc uv\rc w-u\lc \lc v\rc \rc w+u\lc \lc 1\rc v\rc w - \lc u\rc v\lc 1\rc
w+u\lc 1\rc v\lc 1\rc w\\
&&+\lc 1\rc uv \lc 1\rc w-\lambda uv\lc 1\rc w- \lc 1\rc u\lc v \rc w -\lc \lc u\rc\rc vw+\lc \lc 1\rc u\rc vw
+\lc 1\rc u\lc 1\rc vw\\
&& -\lambda u \lc 1\rc vw-\lambda \lc 1\rc uvw+\lambda^{2}uvw
\end{eqnarray*}
} and {\allowdisplaybreaks
\begin{eqnarray*}
B(u,B(v, w))&=&u\lc B(v,w)\rc +\lc u\rc B(v,w)- u\lc 1\rc B(v,w)-\lc
1\rc uB(v,w)+\lambda uB(v,w)\\
&\astarrow_\phi & u\lc v\lc w\rc\rc +u\lc \lc v\rc w\rc -u\lc v\lc 1\rc w\rc -u\lc\lc 1\rc vw\rc+ \lambda u\lc vw\rc +\lc u\rc
v\lc w\rc \\
&&+\lc u\lc v\rc \rc w+ \lc \lc u\rc v\rc w -\lc u\lc 1\rc v\rc
w-\lc \lc 1\rc uv\rc w+\lambda \lc uv\rc w-\lc u\rc v\lc1\rc w \\
&&-\lc \lc u\rc \rc vw+\lc \lc 1\rc u \rc vw-u\lc 1\rc v\lc w\rc -u\lc\lc v\rc \rc w +u\lc \lc 1\rc v\rc w + u\lc 1\rc v\lc 1\rc w\\
&&-\lc 1\rc uv\lc w\rc -\lc 1\rc u\lc v\rc w +\lc 1\rc uv\lc 1\rc
w+\lc 1\rc u\lc 1\rc vw -\lambda \lc 1\rc uvw +\lambda uv \lc w\rc\\
&& -\lambda uv\lc 1\rc w-\lambda u\lc 1\rc vw+\lambda ^{2} uvw.
\end{eqnarray*}
}
Note that the $i$-th term in the expansion of $B(B(u,v),w)$ matches with the $\sigma(i)$-th term in the expansion of $B(u,B(v,w))$. Here the permutation $\sigma\in S_{27}$ is defined by
 \begin{equation*}
\left ( \begin{array}{c} i\\\sigma(i)\end{array}\right) = \begin{array}{l} \left (
\begin{array}{cccccccccccccccccccccccccccccccccccccc}
1&2&3&4&5&6&7&8&9&10&11&12&13&14\\
1&2&3&4&5&6&15&19&24&7&8&9&10&11
\end{array}
\right) \\
\medskip
\left( \begin{array}{ccccccccccccccccccccccccccccccccccccc}
 15&16&17&18&19&20&21&22&23&24&25&26&27\\
 16&17&12&18&21&25&20&13&14&22&26&23&27 \end{array} \right )
 \end{array}
\end{equation*}
Thus $B(B(u,v),w)-B(u,B(v,w)) \astarrow_\phi 0.$
\smallskip

\noindent (\mref{it:exam2}) follows from Item~(\mref{it:exam1}), Corollary~\mref{co:rbcon} and Proposition~\mref{pp:rocomp}.
\smallskip

\noindent (\mref{it:exam3}) follows from Item~(\mref{it:exam2}) and Theorem~\mref{thm:free}.
\end{proof}

\noindent {\bf Acknowledgements}: This work was supported by the National Natural Science Foundation of China (Grant No. 11201201, 11371177 and 11371178), Fundamental Research Funds for the Central Universities (Grant No. lzujbky-2013-8),
the Natural Science Foundation of Gansu Province (Grant No. 1308RJZA112) and the National Science Foundation of US (Grant No. DMS~1001855).

\end{document}